\documentclass[reqno]{amsart}
\usepackage{amsmath}
\usepackage{amssymb}
\usepackage{amsthm}
\usepackage{graphicx}
\usepackage{color}

\newtheorem{theorem}{Theorem}[section]
\newtheorem{proposition}[theorem]{Proposition}

\newtheorem{lemma}[theorem]{Lemma}
\theoremstyle{definition}
\newtheorem{definition}[theorem]{Definition}
\newtheorem{example}[theorem]{Example}

\newtheorem{remark}[theorem]{Remark}
\newtheorem{notation}[theorem]{Notation}

\newcommand{\oline}[1]{\mathbin{\overline{#1}}}
\newcommand{\uline}[1]{\mathbin{\underline{#1}}}

\newcounter{nofigs}
\renewenvironment{figure}[0]{\refstepcounter{nofigs}}
{\centerline{Figure \thenofigs}\medskip}

\newcounter{samfigs}
\newenvironment{samfigure}[4]{\refstepcounter{samfigs}\label{#2}
\centerline{\begin{tabular}{c}\includegraphics[#4]{#1}\\
Figure \thesamfigs #3\end{tabular}}}{}







\newcommand{\blangle}{{\Big\langle}\hspace{-2.1mm} {\Big\langle}\hspace{-2.1mm} {\Big\langle}\hspace{-2.1mm} {\Big\langle} \hspace{-2.1mm} {\Big\langle} \hspace{-2.1mm} {\Big\langle} \hspace{-2.1mm} {\Big\langle} \hspace{-2.1mm} {\Big\langle} \hspace{-2.1mm} {\Big\langle}}

\newcommand{\brangle}{{\Big\rangle}\hspace{-2.1mm} {\Big\rangle}\hspace{-2.1mm} {\Big\rangle}\hspace{-2.1mm} {\Big\rangle} \hspace{-2.1mm} {\Big\rangle} \hspace{-2.1mm} {\Big\rangle} \hspace{-2.1mm} {\Big\rangle} \hspace{-2.1mm} {\Big\rangle} \hspace{-2.1mm} {\Big\rangle}}

\newcommand{\blsq}{{\Big[}\hspace{-1.6mm}{\bf \Big[}\hspace{-1.6mm}{\Big[}\hspace{-1.6mm}{\Big[}\hspace{-1.6mm}{\Big[}\hspace{-1.6mm} {\Big[}\hspace{-1.6mm} {\Big[} \hspace{-1.6mm} {\Big[} \hspace{-1.6mm} {\Big[} \hspace{-1.6mm} {\Big[}} 

\newcommand{\brsq}{{\Big]}\hspace{-1.6mm}{\Big]}\hspace{-1.6mm}{\Big]}\hspace{-1.6mm}{\Big]}\hspace{-1.6mm}{\Big]}\hspace{-1.6mm} {\Big]}\hspace{-1.6mm} {\Big]}\hspace{-1.6mm} {\Big]}  \hspace{-1.6mm} {\Big]} \hspace{-1.6mm} {\Big]}}

\begin{document}



\title[Local biquandles and Niebrzydowski's tribracket theory]{Local biquandles and\\ Niebrzydowski's tribracket theory }

\author[S.~Nelson]{Sam Nelson} 
\address{Department of Mathematical Sciences, Claremont McKenna College, Claremont, CA 91711, USA}
\email{Sam.Nelson@cmc.edu}

\author[K.~Oshiro]{Kanako Oshiro}
\address{Department of Information and Communication Sciences, Sophia University, Tokyo 102-8554, Japan}
\email{oshirok@sophia.ac.jp}

\author[N.~Oyamaguchi]{Natsumi Oyamaguchi}
\address{Department of Teacher Education, Shumei University, Chiba 276-0003, Japan}
\email{p-oyamaguchi@mailg.shumei-u.ac.jp}

\keywords{Link, surface-link, tribracket, local biquandle, region coloring, semi-arc coloring, (co)homology group, cocycle invariant}

\subjclass[2010]{57M27, 57M25}

\date{\today}

\maketitle

\begin{abstract}
We introduce a new algebraic structure called \textit{local 
biquandles} and show how colorings of oriented classical link diagrams
and of broken surface diagrams are related to tribracket colorings. We 
define a (co)homology theory for local biquandles and show that it is isomorphic 
to Niebrzydowski's tribracket (co)homology. 
This implies that Niebrzydowski's (co)homology theory can be interpreted 
similary as biqandle (co)homology theory.
Moreover through the isomorphism between two cohomology groups, 
we show that Niebrzydowski's cocycle invariants and local biquandle cocycle 
invariants are the same. 
\end{abstract}

\section*{Introduction}

Invariants of knots and knotted surfaces defined in terms of colorings by
algebraic structures have a long history, including colorings by algebraic
structures such as groups, quandles, biquandles and more \cite{ElhamdadiNelson}.
Enhancements, called cocycle invariants,  of these invariants using cocycles 
in cohomology theories
associated to the coloring structures were popularized in the late 1990s
in papers such as \cite{CarterJelsovskyKamadaLangfordSaito03} and have been 
a topic of much study ever since.

In \cite{Niebrzydowski0}, colorings of the planar complement of a 
knot diagram by algebraic structures now known as \textit{(knot-theoretic) 
ternary quasigroups} were introduced and used to define invariants of knots.
In \cite{Niebrzydowski1,Niebrzydowski2},  a (co)homology theory for these structures
was introduced and used to enhance the coloring invariants.

In \cite{NeedellNelson16}, an algebraic structure known as \textit{biquasile} 
was introduced by the first author and used to 
define oriented link invariants via colorings of certain graphs obtained
from oriented link diagrams. These invariants were enhanced with Boltzmann
weights in \cite{ChoiNeedellNelson17} and used to distinguish orientable 
surface-links in \cite{KimNelson}. Biquasile colorings can be understood 
in terms of ternary quasigroup colorings, and these Boltzmann weights can 
be understood as enhancement by cocycles in ternary quasigroup cohomology.

In more recent papers by the first author, knot-theoretic ternary quasigroup 
structures have been studied and generalized in terms of ternary operations 
called \textit{Niebrzydowski tribrackets}. In \cite{PicoNelson}, tribracket
coloring invariants are extended to the case of oriented virtual links. In 
\cite{GravesNelsonTamagawa} tribracket colorings are extended to $Y$-oriented 
trivalent spatial graphs and handlebody-links, and in \cite{NeedellNelsonShi}, 
enhancements of tribracket colorings by structures called 
\textit{tribracket modules} are defined.

In this paper, we introduce a new algebraic structure called \textit{local 
biquandles} and show how colorings of oriented classical link diagrams
and of broken surface diagrams are related to tribracket colorings. We 
define a (co)homology theory for local biquandles and show that it is isomorphic 
to Niebrzydowski's tribracket (co)homology. 
This implies that Niebrzydowski's (co)homology theory can be interpreted 
similary as biqandle (co)homology theory since our local biquandle (co)homology theory 
is analogous to biquandle (co)homology theory.
Moreover through the isomorphism between two cohomology groups, 
we show that Niebrzydowski's cocycle invariants and local biquandle cocycle 
invariants are the same. We provide examples of cocycles
and computation of these cocycle invariants.
 
The paper is organized as follows: In Section~\ref{Preliminaries}, we review the definitions of links, surface-links and tribrackets,  and we introduce local biquandles. 
In Section~\ref{Local biquandle homology groups and cocycle invariants}, we define local biquandle (co)homology groups, colorings using local biquandles and cocycle invariants.
 In Section~\ref{Niebrzydowski's work}, we summarize Niebrzydowski's work shown in \cite{Niebrzydowski0, Niebrzydowski1,Niebrzydowski2}, that is, we review Niebrzydowski's (co)homology groups, colorings using tribrackets and cocycle invariants. 
In Section~\ref{Correspondence between our work and Niebrzydowski's work}, our main results are stated and proved, that is,  we show that local biquandle (co)homology groups are isomorphic to Niebrzydowski's ones, and  local biquandle cocycle invariants are the same as Niebrzydowski's ones. 
We provide some examples in Section~\ref{Examples}.

\section{Preliminaries} \label{Preliminaries}
\subsection{Knots, links, connected diagrams}
Throughout this paper, a knot/link  means an oriented classical knot/link. 

A knot diagram is always connected.  
A link diagram with at least two components is said to be {\it connected} if every component intersects another  component.
It is known that between two connected  diagrams $D$ and $D'$ that represent the same link, there exists a finite sequence of connected diagrams and oriented Reidemeister moves that transforms $D$ to $D'$, i.e., there exists 
\[
D=D_0 \overset{R_0} {\longrightarrow} D_1 \overset{R_1} {\longrightarrow}\cdots \overset{R_{i-1}} {\longrightarrow}D_i \overset{R_i} {\longrightarrow} \cdots   \overset{R_{n-1}} {\longrightarrow} D_n =D',
\]  
where for each $i\in \{0,1,\ldots ,n\}$, $R_i$ is an oriented Reidemeister move, and $D_i$ is a connected diagram of a link.

For a diagram $D$, we remove a small neighborhood of each crossing, and then, we call each connected component a {\it semi-arc} of $D$.  
In this paper, for a link diagram $D$, $\mathcal{SA}(D)$ means the set of semi-arcs of $D$ and $\mathcal{R}(D)$ means  the set of connected regions of $\mathbb R^2\setminus D$.

For each semi-arc $x$ of a connected diagram $D$, we set two parallel copies  $x^{(+)}$ and $x^{(-)}$  of $x$ as depicted in Figure~\ref{semi-arc}, where we note that $x^{(+)}$ (resp. $x^{(-)}$) is in the right-side (resp. left-side) of $x$.
The {\it $2$-parallel  $\widetilde{D}$} of $D$ is the gathering of the two copies for all semi-arcs of $D$, that is, $\widetilde{D}=\bigsqcup \mathcal{SA}(\widetilde{D})$, see Figure~\ref{semi-arc}, where throughout this paper, $\mathcal{SA}(\widetilde{D})$ means the set $\{x^{(\varepsilon)} ~|~ x\in \mathcal{SA}(D), \varepsilon \in \{+, -\}\}$. 
\begin{figure}
  \begin{center}
    \includegraphics[clip,width=8cm]{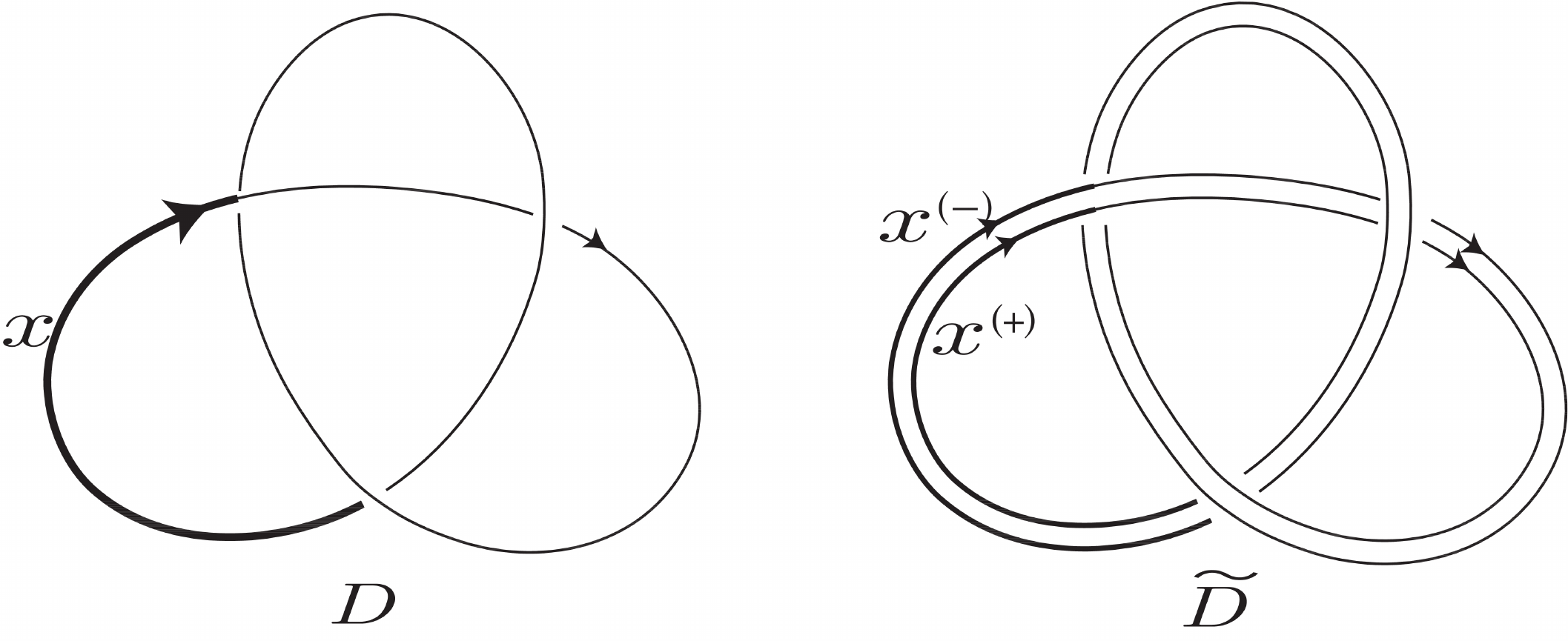}
    \label{semi-arc}
  \end{center}
\end{figure}


\subsection{Surface-knots, surface-links, connected diagrams}
A {\it surface-knot} is an oriented closed surface locally flatly embedded in $\mathbb R^4$. A {\it surface-link} is a disjoint union of surface-knots.  We note that every surface-knot is a surface-link. 
Two surface-links are said to be {\it equivalent} if they can be deformed into each other through an isotopy of $\mathbb R^4$.

A {\it diagram} of a surface-link is its image by a regular projection, from $\mathbb R^4$ to $\mathbb R^3$, equipped with the height information for each double point curve, where the height information is represented by removing small  neighborhoods of lower double point curves. Then a diagram is composed of  four kinds of local pictures depicted in Figure~\ref{multiplepoint}, and the indicated points are called a {\it regular point}, a {\it double point}, a {\it triple  point} and a {\it branch point}, respectively.  
It is known that two surface-link diagrams represent the same surface-link if and only if they are related by a finite sequence of Roseman moves, see \cite{Roseman} for details.
A surface-knot diagram is always connected.  
A surface-link diagram with at least two components is said to be {\it connected} if every component intersects another  component.
It is known that between two connected  diagrams $D$ and $D'$ that represent the same surface-link, there exists a finite sequence of connected diagrams and oriented Roseman moves that transforms $D$ to $D'$.
\begin{figure}
  \begin{center}
    \includegraphics[clip,width=10cm]{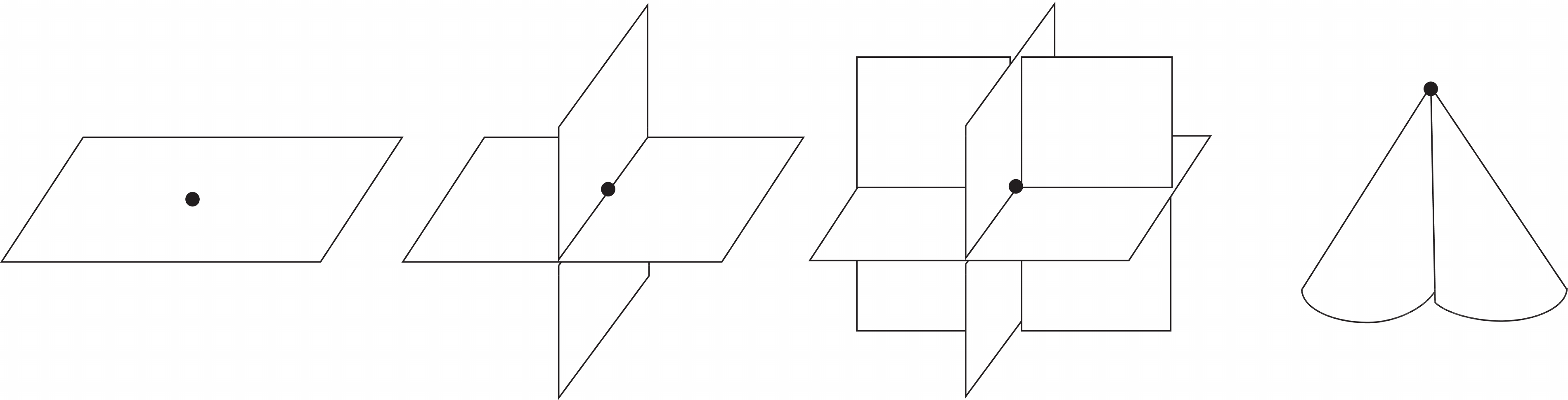}
    \label{multiplepoint}
  \end{center}
\end{figure}

For a surface-link diagram $D$, we remove small  neighborhoods of double point curves, and then, we call each connected component a {\it semi-sheet} of $D$.  
In this paper, for a surface-link diagram $D$, $\mathcal{SS}(D)$ means the set of semi-sheets of $D$ and $\mathcal{R}(D)$ means  the set of connected regions of $\mathbb R^3 \setminus D$.
For a semi-sheet $x$ of a surface-link diagram $D$, we assign a normal orientation $n_x$ to $x$ to satisfy that 
the triple $(o_1, o_2, n_x)$ of the orientation $(o_1, o_2)$ of $D$ and $n_x$ coincides with the right-handed orientation of $\mathbb R^3$, and thus, we represent the orientation of $D$.      

\subsection{Tribrackets}

\begin{definition}\label{def:horizontal}
 A {\it knot-theoretic horizontal-ternary-quasigroup} is a pair of  a   set $X$  and a ternary operation $[\, ]: X^3 \to X; (a,b,c) \mapsto [a,b,c]$ satisfying the following property:
\begin{enumerate}
\item[] \hspace{-0.5cm}($\mathcal{H}$1) For any $a,b,c\in X$, 
\begin{itemize}
\item[(i)] there exists a unique $d_1\in X$ such that $[a,b,d_1]=c$, 
\item[(ii)] there exists a unique $d_2\in X$ such that $[a,d_2,b]=c$, 
\item[(iii)] there exists a unique $d_3 \in X$ such that $[d_3,a,b]=c$.
\end{itemize}
\item[] \hspace{-0.5cm}($\mathcal{H}$2) For any $a,b,c,d \in X$, it holds that 
\[
\begin{array}{l}
[b,[a,b,c],[a,b,d]] = [c,[a,b,c],[a,c,d]]=[d,[a,b,d],[a,c,d]].
\end{array} 
\]
\end{enumerate}
We call the operation $[\,]$  a {\it horizontal-tribracket}.
\end{definition}
The axioms of a knot-theoretic horizontal-ternary-quasigroup $(X, [\,])$ are obtained from the oriented Reidemeister moves of link diagrams, which is observed when we consider an assignment of an element of $X$ to each region of a link diagram satisfying the condition depicted in Figure~\ref{coloring2}.
See Figure~\ref{RmoveIII} for the correspondence between the Reidemeister move of type III and the axiom ($\mathcal{H}$2) of a knot-theoretic horizontal-ternary-quasigroup $(X, [\,])$.
\begin{figure}
  \begin{center}
    \includegraphics[clip,width=7.0cm]{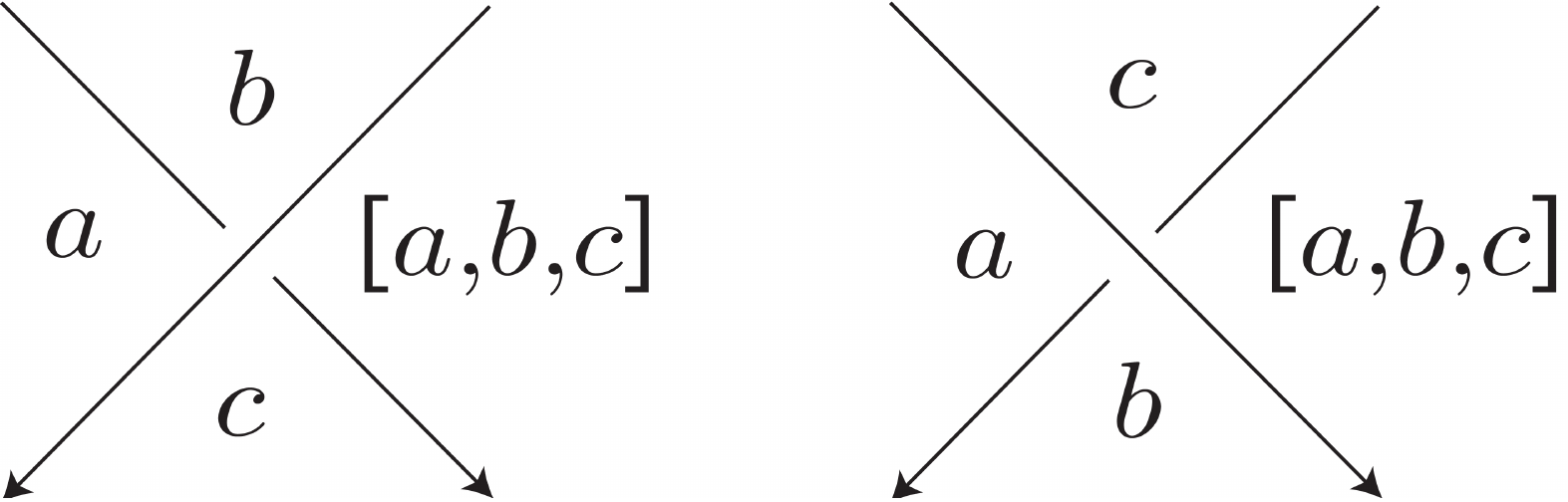}
    \label{coloring2}
  \end{center}
\end{figure}
\begin{figure}
  \begin{center}
    \includegraphics[clip,width=6.0cm]{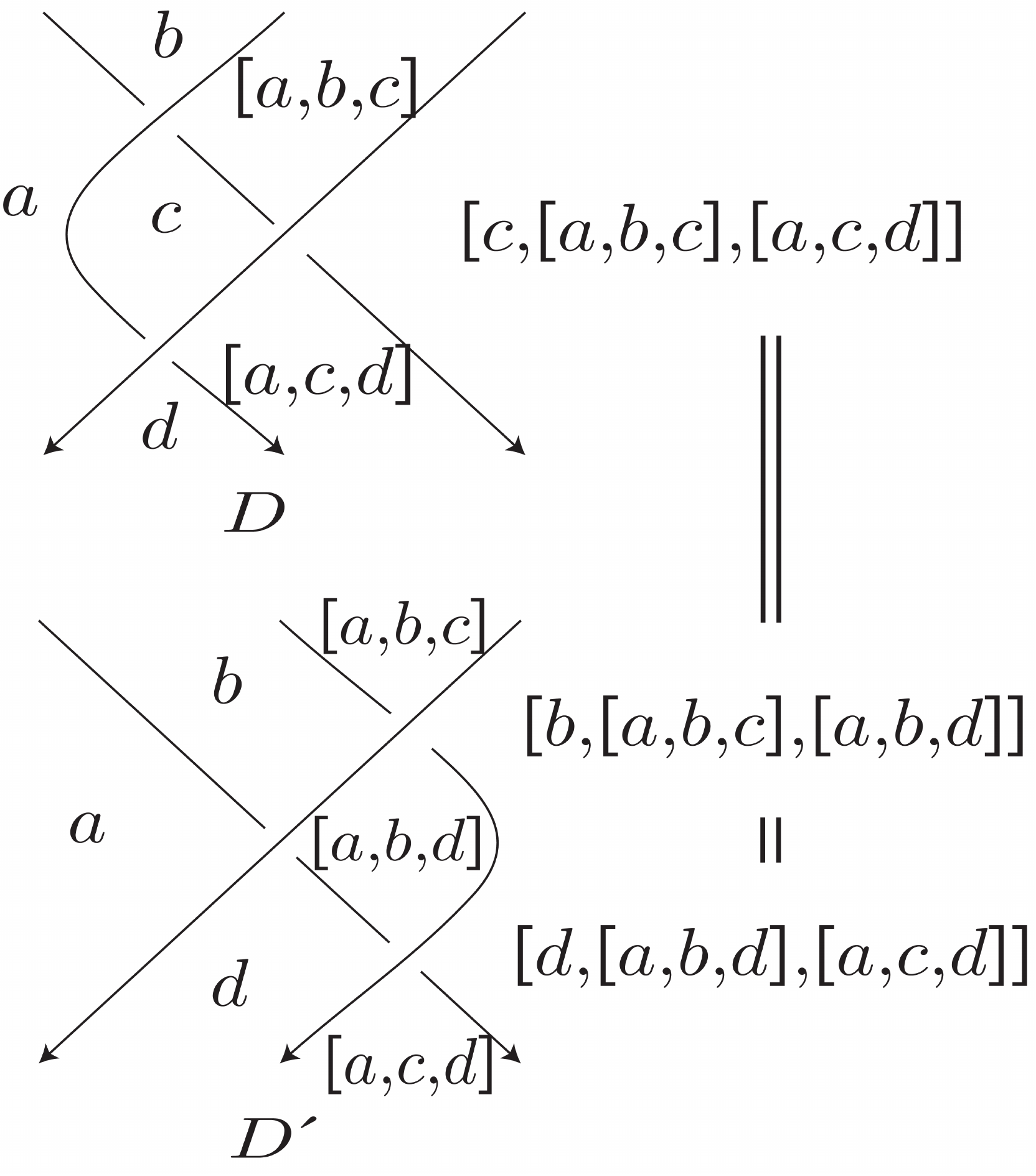}
    \label{RmoveIII}
  \end{center}
\end{figure}

\begin{definition} \label{s-ternary operation}
 A {\it knot-theoretic vertical-ternary-quasigroup} is a pair of  a   set $X$  and a ternary operation $\langle\, \rangle: X^3 \to X; (a,b,c) \mapsto \langle a,b,c \rangle $ satisfying the following property:
\begin{itemize}
\item[] \hspace{-0.5cm}($\mathcal{V}$1) For any $a,b,c\in X$, 
\begin{itemize}
\item[(i)] there exists a unique $d_1\in X$ such that $\langle a,b,d_1\rangle=c$,
\item[(ii)] there exists a unique $d_2\in X$ such that $\langle a, d_2, b \rangle=c$,
\item[(iii)] there exists a unique $d_3\in X$ such that $\langle d_3,a,b \rangle=c$.
\end{itemize}
\item[] \hspace{-0.5cm}($\mathcal{V}$2)  For any $a,b,c,d \in X$, it holds that 
\begin{itemize}
\item[(i)] $\big\langle a , \langle a,b,c\rangle ,  \langle \langle  a, b, c\rangle ,  c, d \rangle \big \rangle  =\big\langle  a, b, \langle  b, c, d \rangle \big \rangle $ and  
\item[(ii)] $\big\langle \langle a, b, c \rangle, c, d \big \rangle =\big\langle \langle  a, b, \langle b, c, d\rangle \rangle, \langle b, c, d \rangle ,  d \big\rangle.$
\end{itemize}
\end{itemize}
We call the operation $\langle\, \rangle$  a {\it vertical-tribracket}. 
\end{definition}
The axioms of a vertical-tribracket are obtained from the oriented Reidemeister moves of link diagrams, which is observed when we consider  an assignment of an element of $X$ to each region of a link diagram satisfying the condition depicted in Figure~\ref{coloring3} (see Definition~\ref{def:regioncoloring1}).
\begin{figure}
  \begin{center}
    \includegraphics[clip,width=7.0cm]{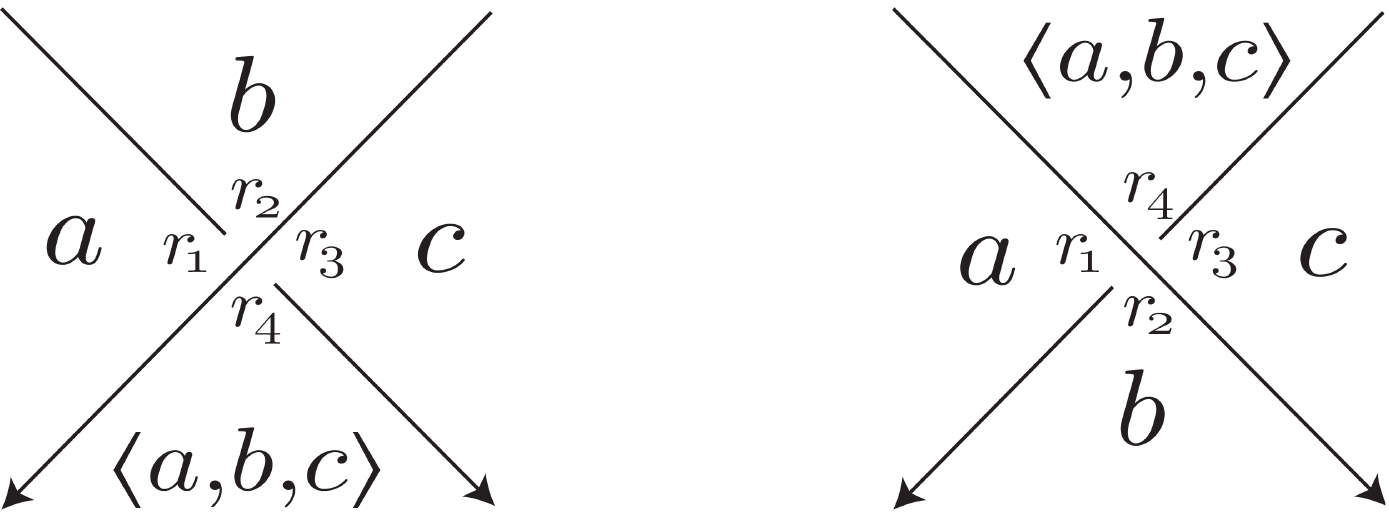}
    \label{coloring3}
  \end{center}
\end{figure}

\begin{remark}\label{rem:correspondence}
Horizontal- and vertical-tribrackets are induced from each other as follows:
Each vertical-tribracket $\langle\, \rangle$ satisfies the condition 
\begin{itemize}
\item[] \hspace{-0.5cm}($\mathcal{V}$1)-(i) For any $a,b,c \in X$, there exists a unique $d\in X$ such that $\langle a, b, d \rangle =c$, 
\end{itemize}
and hence,  by defining a horizontal-tribracket $[\,]: X^3\to X$ by $ (a,b,c) \mapsto d(=:[a,b,c])$, $[\, ]$ is induced from $\langle\, \rangle$. We call $[\, ]$ the {\it corresponding horizontal-bracket} of $\langle\, \rangle$. 
On the other hand, each horizontal-tribracket $[\,]$ satisfies the condition 
\begin{itemize}
\item[] \hspace{-0.5cm}($\mathcal{H}$1)-(i) For any $a,b,c \in X$, there exists a unique $d\in X$ such that $[a,b,d]=c$, 
\end{itemize}
and hence, by defining a vertical-tribracket $\langle\, \rangle: X^3\to X$ by $ (a,b,c) \mapsto d(=:\langle a,b,c \rangle)$,  $\langle\, \rangle$ is induced from $[\, ]$. We call $\langle\, \rangle$ the {\it corresponding vertical-bracket} of $[\, ]$.
\end{remark}

Next lemma follows from Remark~\ref{rem:correspondence}, see also Figure~\ref{tribracketsfomula1}.
\begin{lemma}\label{lem:tribrackets1}
 For a   set $X$, let $[\, ]$ be a horizontal-tribracket on $X$ and $\langle\, \rangle$
the corresponding vertical-tribracket of $[\,]$. Then for any $a,b,c,d\in X$, we have 
\[
\mbox{\rm (1)}~~c=\langle a,b,[a,b,c]\rangle\quad \mathrm{and}\quad \mbox{\rm (2)}~~
d=[a,b,\langle a,b,d\rangle].\]
\end{lemma}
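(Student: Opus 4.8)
The plan is to read off both identities directly from the definition of the corresponding vertical-tribracket recorded in Remark~\ref{rem:correspondence}, using only the uniqueness clause of axiom ($\mathcal{H}$1)-(i). Recall that $\langle\,\rangle$ was \emph{defined} by declaring $\langle a,b,c\rangle$ to be the unique element $d\in X$ with $[a,b,d]=c$; this definition makes sense precisely because the horizontal-tribracket $[\,]$ satisfies ($\mathcal{H}$1)-(i). Thus the lemma really just says that, with the first two slots $a,b$ held fixed, the maps $c\mapsto[a,b,c]$ and $d\mapsto\langle a,b,d\rangle$ are mutually inverse bijections of $X$.

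For identity (2), I would substitute straight into the definition: taking $e:=\langle a,b,d\rangle$, the defining property of $\langle\,\rangle$ says exactly that $[a,b,e]=d$, i.e. $[a,b,\langle a,b,d\rangle]=d$, which is (2). For identity (1), I would argue by uniqueness. Fix $a,b,c\in X$ and put $c':=[a,b,c]$. By definition $\langle a,b,c'\rangle$ is the unique $d\in X$ satisfying $[a,b,d]=c'=[a,b,c]$. The element $d=c$ manifestly satisfies this equation, so the uniqueness assertion in ($\mathcal{H}$1)-(i) (applied with third entry $[a,b,c]$ in place of $c$) forces $\langle a,b,[a,b,c]\rangle=c$, giving (1). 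One can optionally invoke Figure~\ref{tribracketsfomula1} to see the same cancellation diagrammatically.

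There is no genuine obstacle here; the only thing to be careful about is bookkeeping of variables, namely applying ($\mathcal{H}$1)-(i) with its ``$c$'' instantiated as $[a,b,c]$ when proving (1), and not confusing existence with uniqueness. If a slightly more self-contained presentation is wanted, I would first prove (2) and then deduce (1) from (2) by applying $\langle a,b,-\rangle$ to both sides of $[a,b,\langle a,b,e\rangle]=e$ after setting $e:=[a,b,c]$ — but the direct uniqueness argument above is shortest and is the route I would take.
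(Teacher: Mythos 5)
Your argument is correct and is exactly the route the paper takes: the paper gives no written proof beyond the remark that the lemma ``follows from Remark~\ref{rem:correspondence}'' (with Figure~\ref{tribracketsfomula1} as an illustration), and your unpacking of that remark — identity (2) by direct substitution into the definition of $\langle\,\rangle$, identity (1) by the uniqueness clause of ($\mathcal{H}$1)-(i) — is precisely the intended verification.
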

\begin{figure}
  \begin{center}
    \includegraphics[width=5cm]{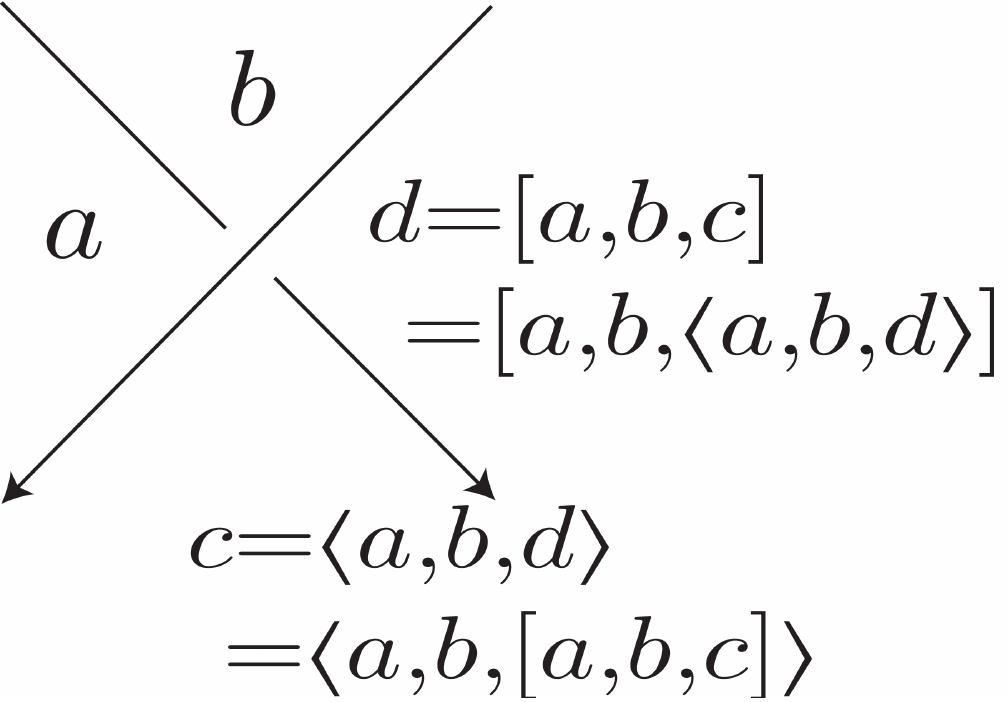}
    \label{tribracketsfomula1}
  \end{center}
\end{figure}

\begin{lemma}\label{lem:tribrackets2}
For a   set $X$, let $[\, ]$ be a horizontal-tribracket on $X$ and $\langle\, \rangle$
the corresponding vertical-tribracket of $[\,]$. Then for any $a,b,c,d\in X$, we have 
\begin{itemize}
\item[(1)]
\begin{eqnarray*}
{}\big[b,d,[a,b,c]\big] &  = & \big[\langle a,b,d\rangle, d, [a,\langle a,b,d\rangle, c]\big]\\
& = &\big[c,[a,b,c],[a,\langle a,b,d\rangle, c]\big],\\
\end{eqnarray*}
\item[(2)]
\begin{eqnarray*}
{}\big[a,\langle a,b,d\rangle, c\big] & = &  \big\langle c,[a,b,c], [b,d,[a,b,c]]\big\rangle\\
 & = & \big\langle \langle a,b,d\rangle, d,[b,d,[a,b,c]]\big\rangle.
\end{eqnarray*}
\end{itemize}
\end{lemma}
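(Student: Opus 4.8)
The plan is to obtain part~(1) directly from axiom~($\mathcal{H}$2) by a single well-chosen substitution, and then to deduce part~(2) from part~(1) by ``inverting'' the outermost horizontal-tribracket via the correspondence of Remark~\ref{rem:correspondence}. Throughout I will use the elementary exchange rule: for all $p,q,r,s\in X$ one has $[p,q,r]=s$ if and only if $r=\langle p,q,s\rangle$. This is immediate, since by Remark~\ref{rem:correspondence} the element $\langle p,q,s\rangle$ is the unique solution $t$ of $[p,q,t]=s$ (so $[p,q,r]=s$ forces $r=\langle p,q,s\rangle$), while the converse is exactly Lemma~\ref{lem:tribrackets1}(2).

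For part~(1), I apply axiom~($\mathcal{H}$2) with its four arguments $a,b,c,d$ replaced by $a,\ b,\ \langle a,b,d\rangle,\ c$, respectively. Under this substitution the three expressions $[b,[a,b,c],[a,b,d]]$, $[c,[a,b,c],[a,c,d]]$, $[d,[a,b,d],[a,c,d]]$ become $[b,[a,b,\langle a,b,d\rangle],[a,b,c]]$, $[\langle a,b,d\rangle,[a,b,\langle a,b,d\rangle],[a,\langle a,b,d\rangle,c]]$, and $[c,[a,b,c],[a,\langle a,b,d\rangle,c]]$. Since $[a,b,\langle a,b,d\rangle]=d$ by Lemma~\ref{lem:tribrackets1}(2), the first two collapse to $[b,d,[a,b,c]]$ and $[\langle a,b,d\rangle,d,[a,\langle a,b,d\rangle,c]]$, so the chain of equalities delivered by ($\mathcal{H}$2) is precisely the assertion of part~(1).

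For part~(2), I start from the two equalities just proved in part~(1). Writing $U:=[a,\langle a,b,d\rangle,c]$ and $W:=[b,d,[a,b,c]]$, part~(1) says $W=[\langle a,b,d\rangle,d,U]=[c,[a,b,c],U]$, and I solve each of these equations for $U$ using the exchange rule. Applying it to $[c,[a,b,c],U]=W$ with $p=c$, $q=[a,b,c]$, $r=U$, $s=W$ gives $U=\langle c,[a,b,c],[b,d,[a,b,c]]\rangle$, which is the first equality of part~(2). Applying it to $[\langle a,b,d\rangle,d,U]=W$ with $p=\langle a,b,d\rangle$, $q=d$, $r=U$, $s=W$ gives $U=\langle\langle a,b,d\rangle,d,[b,d,[a,b,c]]\rangle$, the second equality of part~(2).

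I do not anticipate a genuine analytic obstacle; the only point requiring care is identifying the correct substitution into ($\mathcal{H}$2) in the first step, which is essentially forced by the requirement that both $[a,b,c]$ and $[a,\langle a,b,d\rangle,c]$ appear as ``third-slot'' sub-brackets based at $a$. If desired, the identity can additionally be illustrated by the region labels around a type~III Reidemeister move read off with mixed horizontal/vertical conventions, in the spirit of Figure~\ref{RmoveIII}, but the algebraic derivation above is self-contained from the results already established.
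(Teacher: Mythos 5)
Your proof is correct. For part (1) you use exactly the substitution $(a,b,c,d)\mapsto(a,b,\langle a,b,d\rangle,c)$ into ($\mathcal{H}$2) that the paper's proof relies on, together with Lemma~\ref{lem:tribrackets1}(2) to simplify $[a,b,\langle a,b,d\rangle]$ to $d$; the only difference is that you read off the full three-term chain of ($\mathcal{H}$2) and so obtain both equalities of (1) in one stroke, whereas the paper works out only the equality between the second and third expressions and leaves the remaining one to the reader. For part (2) your route is genuinely different: the paper proves (2) independently, starting from $\big\langle\langle a,b,d\rangle,d,[b,d,[a,b,c]]\big\rangle$ and applying axiom ($\mathcal{V}$2)(ii) together with Lemma~\ref{lem:tribrackets1}(1), while you deduce (2) directly from (1) by solving each equation for $U=[a,\langle a,b,d\rangle,c]$ via the exchange rule $[p,q,r]=s\iff r=\langle p,q,s\rangle$, which is exactly the uniqueness clause of ($\mathcal{H}$1)-(i) combined with Lemma~\ref{lem:tribrackets1}(2). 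Your derivation is shorter and keeps the logical dependence entirely on the horizontal axioms and the definition of the correspondence, never invoking ($\mathcal{V}$2); the paper's version instead exhibits (2) as a direct instance of the vertical axiom, which parallels its treatment of (1) and the Reidemeister-III picture. Both arguments are complete and valid.
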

\begin{proof}
(1) \  We have 
\[
\begin{array}{ll}
\big[c,[a,b,c],[a,\langle a,b,d\rangle, c]\big]\\[3pt]
= \big[\langle a,b,d\rangle, [a,b,\langle a,b,d\rangle], [a,\langle a,b,d\rangle, c]\big]\\[3pt]
= \big[\langle a,b,d\rangle, d, [a,\langle a,b,d\rangle, c]\big],\\
\end{array}
\]
where the first equality follows from the second equality  of ($\mathcal{H}$2) of Definition~\ref{def:horizontal} and the second equality follows from (2) of Lemma~\ref{lem:tribrackets1}. 
We leave the proof of the other equality of (1) to the reader, see also  Figure~\ref{tribracketsfomula2}.

(2) \  We have 
\[
\begin{array}{ll}
\big\langle \langle a,b,d\rangle, d,[b,d,[a,b,c]]\big\rangle\\[3pt]
= \big\langle \langle a,b,\langle b,d,[b,d,[a,b,c]]\rangle \rangle, \langle b,d,[b,d,[a,b,c]]\rangle,[b,d,[a,b,c]] \big\rangle\\[3pt]
=\big\langle \langle a,b,[a,b,c]\rangle,[a,b,c], [b,d,[a,b,c]]\big\rangle\\[3pt]
=\big\langle c,[a,b,c], [b,d,[a,b,c]]\big\rangle,\\
\end{array}
\]
where the first equality follows from (ii) of ($\mathcal{V}$2) of Definition~\ref{s-ternary operation} and the second and third equalities follow from (1) of Lemma~\ref{lem:tribrackets1}. We leave the proof of the other equality of (2) to the reader, see also  Figure~\ref{tribracketsfomula2}.

\end{proof}
\begin{figure}
  \begin{center}
    \includegraphics[width=13.0cm]{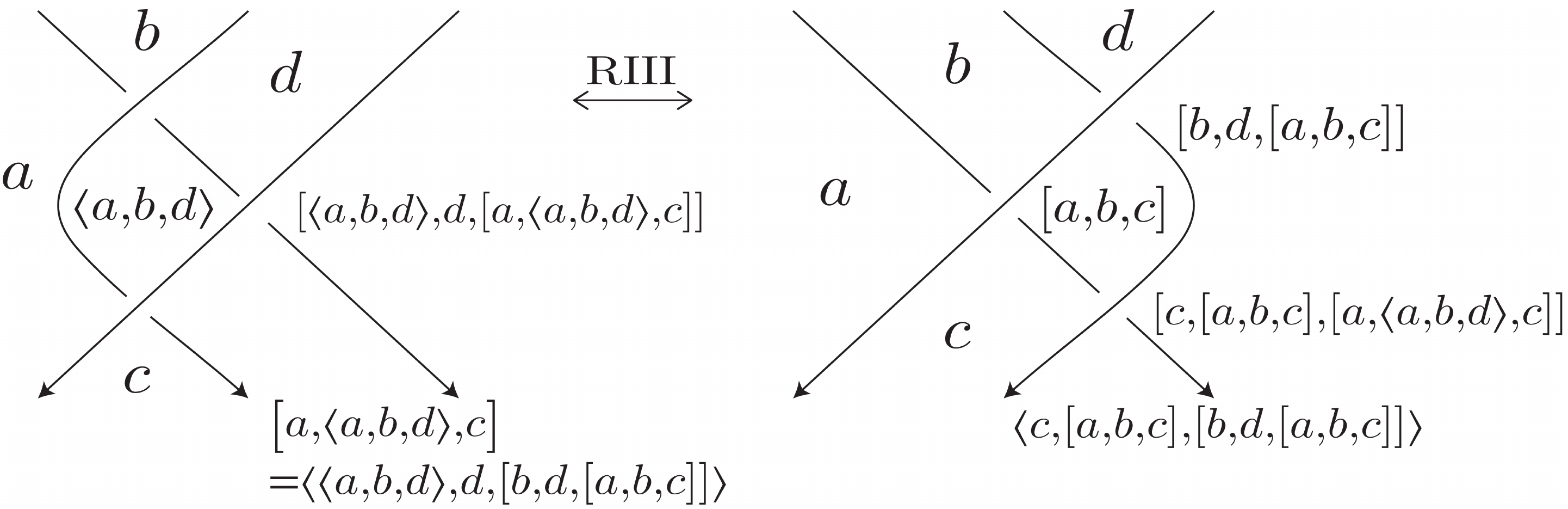}
    \label{tribracketsfomula2}
  \end{center}
\end{figure}

\begin{remark}
In \cite{Niebrzydowski1}, Niebrzydowski defined 
a knot-theoretic ternary quasigroup $(X, T)$ with the left division $\mathcal{L}$, the middle division $\mathcal{M}$ and the right  devision $\mathcal{R}$.
An horizontal-tribracket $[\,]$ in this paper coincides with a right division $\mathcal{R}$ in \cite{Niebrzydowski1}.
A vertical-tribracket $\langle\, \rangle$ in this paper coincides with his tribracket $T$ in \cite{Niebrzydowski1}.
In \cite{NeedellNelson16, ChoiNeedellNelson17}, binary operations related to vertical-tribrackets are defined. 
\end{remark}

\subsection{Local biquandles}
\begin{definition}\label{def:localbiquandle}
A {\it local biquandle} is a triple $(X,  \{\uline{\star}_a\}_{a\in X}, \{\oline{\star}_a\}_{a\in X}  )$ of a set $X$ and two families of operations $\uline{\star}_a, \oline{\star}_a: (\{a\} \times X)^2 \to X^2$ 
satisfying the following property: 
\begin{itemize}
\item[] \hspace{-0.5cm}($\mathcal{L}$1) For any $a,b,c \in X$, 
\begin{itemize}
\item[(i)] the first component of the result of $(a, b) \uline{\star}_a (a,c)$ is $c$,
\item[(ii)] the first component of the result of $(a, b) \oline{\star}_a (a,c)$ is $c$,
\item[(iii)] the second component of the result of $(a, b) \uline{\star}_a (a,c)$ coincides with that of the result of  $(a, c) \oline{\star}_a (a,b)$.
\end{itemize}
\item[] \hspace{-0.5cm}($\mathcal{L}$2)
\begin{itemize}
\item[(i)] For any $a, b\in  X$, the map $\uline{\star}_a(a, b) :\{a\}\times X \to \{b\}\times X$ sending $(a,c)$ to $(a,c)\uline{\star}_a(a,b)$ is bijective.
\item[(ii)]
For any $a, b\in X$, the map $\oline{\star}_a(a,b):\{a\}\times X \to \{b\}\times  X$ sending $(a,c)$ to $(a,c)\oline{\star}_a(a,b)$ is bijective.
\item[(iii)]
The map $S: \bigcup_{a\in X}(\{a\} \times X)^2\to \bigcup_{d\in X} (X\times \{d\})^2$ defined by $S\big((a,b),(a,c)\big)=\big((a,c)\oline{\star}_a(a,b),(a,b)\uline{\star}_a(a,c) \big)$ is bijective.
\end{itemize}
\item[] \hspace{-0.5cm}($\mathcal{L}$3)
For any $a,b,c\in X$, it holds that
\begin{itemize}
\item[(i)] $\big((a,b)\uline{\star}_a(a,c)\big)\uline{\star}_c\big((a,d)\oline{\star}_a(a,c)\big)=\big((a,b)\uline{\star}_a(a,d)\big)\uline{\star}_d\big((a,c)\uline{\star}_a(a,d)\big)$,
\item[(ii)] 
$\big((a,b)\oline{\star}_a(a,c)\big)\uline{\star}_c\big((a,d)\oline{\star}_a(a,c)\big)=\big( (a,b)\uline{\star}_a(a,d)\big)\oline{\star}_d\big((a,c)\uline{\star}_a(a,d)\big)$,
\item[(iii)] $\big((a,b)\oline{\star}_a(a,c)\big)\oline{\star}_c\big((a,d)\uline{\star}_a(a,c)\big)=\big((a,b)\oline{\star}_a(a,d)\big)\oline{\star}_d\big((a,c)\oline{\star}_a(a,d)\big)$.
\end{itemize}
\end{itemize}

In this paper, for simplicity, we often omit the subscript by $a$ as $\uline{\star}=\uline{\star}_a$, $\oline{\star}=\oline{\star}_a$,  $\{\uline{\star}\}=\{\uline{\star}_a\}_{a\in X}$ and $\{\oline{\star}\}=\{\oline{\star}_a\}_{a\in X}$ unless it causes confusion. 
\end{definition}
\noindent We note that from ($\mathcal{L}$1), we can obtain that
\begin{itemize}
\item[] \hspace{-0.5cm}($\mathcal{L}$1') for any $a, b\in X$, it holds that $(a,b)\uline{\star}_a (a,b) =(a,b)\oline{\star }_a (a,b)$.
\end{itemize}
The property ($\mathcal{L}$1') and the axioms ($\mathcal{L}$2) and ($\mathcal{L}$3) are analogous to the axioms of a biquandle, see \cite{FennRourkeSanderson92, KR02} for the definition of a biquandle.

\begin{proposition}\label{prop:localbqandhorizontaltqg}
\begin{itemize}
\item[(1)] Given a local biquandle $(X,  \{\uline{\star}_a\}_{a\in X}, \{\oline{\star}_a\}_{a\in X}  )$, we have the knot-theoretic horizontal-ternary-quasigroup $(X, [\, ])$ whose horizontal-tribracket $[\,]$ sends $(a,b,c)$ to the second component of the result of $(a, b) \uline{\star}_a (a,c)$ (or $(a, c) \oline{\star}_a (a,b)$).
\item[(2)] Given a knot-theoretic horizontal-ternary-quasigroup $(X, [\, ])$, we have the local biquandle $(X,  \{\uline{\star}_a\}_{a\in X}, \{\oline{\star}_a\}_{a\in X}  )$ whose operations $\uline{\star}_a$ and $\oline{\star}_a$ are defined by 
\[
\begin{array}{l}
(a, b) \uline{\star}_a (a,c) = (c, [a,b,c]), \mbox{ and }\\[5pt]
(a,b) \oline{\star}_a (a,c) = (c,[a,c,b]). 
\end{array}
\]
\item[(3)] The correspondences of (1) and (2) of this proposition are inverse of each other.
\end{itemize}
\end{proposition}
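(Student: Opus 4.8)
The plan is to treat part~(2) as the constructive heart of the statement, part~(1) as the same correspondence read backwards, and part~(3) as a short term‑by‑term check. Throughout I use the expanded form of the operations: with the notation of~(2), $(p,u)\uline{\star}_p(p,v)=(v,[p,u,v])$ and $(p,u)\oline{\star}_p(p,v)=(v,[p,v,u])$ for all $p,u,v\in X$.

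For part~(2), given $(X,[\,])$ I define $\uline{\star}_a,\oline{\star}_a$ by the displayed formulas and verify the axioms in order. Axioms ($\mathcal{L}$1)(i),(ii) are read off directly, and ($\mathcal{L}$1)(iii) is the remark that $(a,b)\uline{\star}_a(a,c)$ and $(a,c)\oline{\star}_a(a,b)$ both have second coordinate $[a,b,c]$ (so ($\mathcal{L}$1$'$) holds as well). For ($\mathcal{L}$2)(i) the map in question sends $(a,c)$ to $(b,[a,c,b])$, so its bijectivity is exactly the unique solvability of $[a,x,b]=c$ in $x$, i.e. ($\mathcal{H}$1)(ii); likewise ($\mathcal{L}$2)(ii) unwinds to ($\mathcal{H}$1)(i). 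For ($\mathcal{L}$2)(iii) I would first identify the domain $\bigcup_{a}(\{a\}\times X)^2$ with $X^3$ via $((a,b),(a,c))\leftrightarrow(a,b,c)$ and the codomain $\bigcup_{d}(X\times\{d\})^2$ with $X^3$ via $((x,d),(y,d))\leftrightarrow(x,y,d)$; under these identifications $S$ becomes $(a,b,c)\mapsto(b,c,[a,b,c])$, whose bijectivity says precisely that $a\mapsto[a,b,c]$ is a bijection of $X$ for each fixed $b,c$, which is ($\mathcal{H}$1)(iii). Finally, substituting the defining formulas into ($\mathcal{L}$3) and comparing second coordinates (the first coordinates of the two sides agree automatically), each identity collapses to one instance of ($\mathcal{H}$2): ($\mathcal{L}$3)(i) is the equality of the middle and right terms of ($\mathcal{H}$2) for $(a,b,c,d)$; ($\mathcal{L}$3)(ii) becomes $[c,[a,c,b],[a,c,d]]=[d,[a,c,d],[a,b,d]]$, the equality of the left and right terms of ($\mathcal{H}$2) for the tuple $(a,c,b,d)$; and ($\mathcal{L}$3)(iii) becomes $[c,[a,d,c],[a,c,b]]=[d,[a,d,c],[a,d,b]]$, the equality of the left and middle terms of ($\mathcal{H}$2) for the tuple $(a,d,c,b)$.

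For part~(1), given a local biquandle I set $[a,b,c]$ to be the second coordinate of $(a,b)\uline{\star}_a(a,c)$; by ($\mathcal{L}$1)(iii) this also equals the second coordinate of $(a,c)\oline{\star}_a(a,b)$, so $[\,]$ is well defined, and ($\mathcal{L}$1) then gives the normal forms $(a,b)\uline{\star}_a(a,c)=(c,[a,b,c])$ and $(a,b)\oline{\star}_a(a,c)=(c,[a,c,b])$. Now every equivalence used in part~(2) reverses: ($\mathcal{L}$2)(i),(ii) yield ($\mathcal{H}$1)(ii),(i), the identification above turns ($\mathcal{L}$2)(iii) into ($\mathcal{H}$1)(iii), and ($\mathcal{L}$3)(i) together with ($\mathcal{L}$3)(ii) (after relabeling $b\leftrightarrow c$) give the middle‑equals‑right and left‑equals‑right parts of ($\mathcal{H}$2), hence all of ($\mathcal{H}$2). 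Part~(3) is then immediate from the normal forms: applying (2) then (1) to $[\,]$ returns the second coordinate of $(c,[a,b,c])$, namely $[\,]$ again; and applying (1) then (2) to a local biquandle rebuilds the operations as $(c,[a,b,c])$ and $(c,[a,c,b])$, which by the normal forms are the original $\uline{\star}_a$ and $\oline{\star}_a$.

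I expect the only step needing care to be the match‑up between ($\mathcal{L}$3) and ($\mathcal{H}$2): checking in each of the three cases that the first coordinates of the two sides coincide, and pinning down the permutation of $(a,b,c,d)$ that brings the resulting second‑coordinate identity into the standard shape of ($\mathcal{H}$2). These permutations are exactly the ones predicted by lifting a Reidemeister~III move to the $2$‑parallel diagram, as in the accompanying figures, so Lemmas~\ref{lem:tribrackets1} and~\ref{lem:tribrackets2} are not needed here; everything else is the routine translation between the bijectivity clauses of ($\mathcal{L}$2) and the unique‑solvability clauses of ($\mathcal{H}$1).
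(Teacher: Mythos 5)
Your proposal is correct and follows essentially the same route as the paper: a direct axiom-by-axiom translation in which ($\mathcal{L}$2) corresponds to the unique-solvability clauses of ($\mathcal{H}$1) and each identity of ($\mathcal{L}$3) reduces, after substituting the normal forms and checking first coordinates, to a pairwise equality in ($\mathcal{H}$2) for a permuted tuple. The only difference is organizational — the paper writes out part (1) in detail and leaves (2) and (3) to the reader, whereas you detail (2) and reverse it for (1) — and your identifications of the relevant permutations $(a,b,c,d)$, $(a,c,b,d)$, $(a,d,c,b)$ all check out.
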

\begin{proof}
(1) Suppose that we have a local biquandle $(X,  \{\uline{\star}_a\}_{a\in X}, \{\oline{\star}_a\}_{a\in X}  )$. We show that the map $[\, ]: X^3 \to X$ sending $(a,b,c)$ to the second component of the result of $(a, b) \uline{\star}_a (a,c)$  (or $(a, c) \oline{\star}_a (a,b)$) satisfies the axioms of horizontal-tribrackets in Definition~\ref{def:horizontal}.

For $a,b,c\in X$,  
since the map $\uline{\star}_a(a,b): \{a\} \times X \to \{b\} \times X $ is bijective by the axiom ($\mathcal{L}$2)-(i) of Definition~\ref{def:localbiquandle}, there exists a unique $d\in X$ such that $(b,c)=(a,d) \uline{\star}_a (a,b)= (b, [a,d,b])$, that is, $[a,d,b] =c$, which satisfies the axiom ($\mathcal{H}$1)-(ii) of Definition~\ref{def:horizontal}. Similarly, the axioms ($\mathcal{L}$2)-(ii) and ($\mathcal{L}$2)-(iii) of Definition~\ref{def:localbiquandle} lead to the axioms ($\mathcal{H}$1)-(i) and  ($\mathcal{H}$1)-(iii) of Definition~\ref{def:horizontal}, respectively.

For $a,b,c, d \in X$, we have 
\[
\begin{array}{ll}
\big([a,c,d], [c, [a,b,c], [a,c,d]]\big)\\[3pt]
=\big((a,b)\uline{\star}_a(a,c)\big)\uline{\star}_c\big((a,d)\oline{\star}_a(a,c)\big)\\[3pt]
=\big((a,b)\uline{\star}_a(a,d)\big)\uline{\star}_d\big((a,c)\uline{\star}_a(a,d)\big)\\[3pt]
=\big([a,c,d], [d,[a,b,d],[a,c,d]]\big)\\
\end{array}
\]
by the axiom ($\mathcal{L}$3)-(i) of Definition~\ref{def:localbiquandle}, 
\[
\begin{array}{ll}
\big([a,b,d], [b, [a,b,c], [a,b,d]]\big)\\[3pt]
=\big((a,c)\oline{\star}_a(a,b)\big)\uline{\star}_b\big((a,d)\oline{\star}_a(a,b)\big)\\[3pt]
=\big( (a,c)\uline{\star}_a(a,d)\big)\oline{\star}_d\big((a,b)\uline{\star}_a(a,d)\big)\\[3pt]
=\big([a,b,d], [d,[a,b,d],[a,c,d]]\big)\\
\end{array}
\]
by the axiom ($\mathcal{L}$3)-(ii) of Definition~\ref{def:localbiquandle}, and
\[
\begin{array}{ll}
\big([a,b,c], [c, [a,b,c], [a,c,d]]\big)\\[3pt]
=\big((a,d)\oline{\star}_a(a,c)\big)\oline{\star}_c\big((a,b)\uline{\star}_a(a,c)\big)\\[3pt]
=\big((a,d)\oline{\star}_a(a,b)\big)\oline{\star}_b\big((a,c)\oline{\star}_a(a,b)\big)\\[3pt]
=\big([a,b,c], [b,[a,b,c],[a,b,d]]\big)\\
\end{array}
\]
by the axiom ($\mathcal{L}$3)-(iii) of Definition~\ref{def:localbiquandle}.
Thus the axiom ($\mathcal{H}$2) of Definition~\ref{def:horizontal} can be obtained from the equality of the second components. We leave the proof of  (2) and (3) to the reader. 
\end{proof}
For a local biquandle $(X, \{\uline{\star}\} , \{\oline{\star}\})$, we call $(X,[\,])$ in (1) of Proposition~\ref{prop:localbqandhorizontaltqg} the {\it corresponding knot-theoretic horizontal-ternary-quasigroup of $(X, \{\uline{\star}\} , \{\oline{\star}\})$}. For a knot-theoretic horizontal-ternary-quasigroup $(X,[\,])$, we call $(X, \{\uline{\star}\} , \{\oline{\star}\})$ in (2) of Proposition~\ref{prop:localbqandhorizontaltqg} the {\it corresponding local biquandle of $(X,[\,])$}.

\section{Local biquandle homology groups and cocycle invariants}\label{Local biquandle homology groups and cocycle invariants}

\subsection{Local biquandle homology groups}
Let $(X, \{\uline{\star}\} , \{\oline{\star}\})$ be a local biquandle, and $(X,[\,])$ the corresponding knot-theoretic horizontal-ternary-quasigroup of $(X, \{\uline{\star}\} , \{\oline{\star}\})$.
Let $n \in \mathbb Z$.
Let $C_n(X)$ be the free $\mathbb Z$-module generated by the elements of 
\[
\bigcup_{a\in X} (\{a\} \times X)^n =\big\{\big( (a,b_1), (a,b_2), \ldots , (a,b_n) \big) ~|~ a, b_1, \ldots , b_n \in X \big\}
\]
if $n\geq 1$, and $C_n(X)=0$ otherwise.
We define a homomorphism $\partial_n : C_n (X) \to C_{n-1} (X)$ by 
\begin{align}
&\partial_n \Big( \big( (a,b_1), \ldots , (a,b_n) \big) \Big) = \sum_{i=1}^{n} (-1)^i \big\{ \big( (a,b_1), \ldots, \widehat{(a, b_i)}, \ldots  , (a,b_n) \big) \notag \\
&- \big( (a,b_1)\uline{\star} (a, b_i), \ldots, (a,b_{i-1})\uline{\star} (a, b_i),    (a,b_{i+1})\oline{\star} (a, b_i) ,\ldots  , (a,b_n) \oline{\star} (a, b_i)\big) \big\} \notag \\
 &= \sum_{i=1}^{n} (-1)^i \big\{ \big( (a,b_1), \ldots, \widehat{(a, b_i)}, \ldots  , (a,b_n) \big) \notag \\
&- \big( (b_i, [a, b_1, b_i] ), \ldots, (b_i, [a, b_{i-1}, b_i]),    (b_i, [a, b_i, b_{i+1}]) ,\ldots  , (b_i, [a, b_i, b_{n}])\big) \big\} \notag 
\end{align}
if $n\geq 2$, and $\partial_n=0$ otherwise, where $\widehat{x}$ means that the component $x$ is removed.
 \begin{lemma}\label{lem:chain complex}
$C_*(X)=\{C_n(X), \partial_n\}_{n\in \mathbb Z}$ is a chain complex, i.e., for any $n \in \mathbb Z$, $\partial_n\circ \partial_{n+1} =0$ holds.
\end{lemma}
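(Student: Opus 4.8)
The plan is to verify $\partial_{n-1}\circ\partial_n=0$ by the standard bookkeeping argument used for rack/quandle/biquandle chain complexes: expand $\partial_{n-1}\partial_n$ as a double sum indexed by pairs of deleted slots, split it into the terms coming from the ``plain-deletion'' summand and those coming from the ``$\star$-shifted'' summand of $\partial_n$, and show that everything cancels in pairs after one carefully chosen change of summation index. First I would set up notation: write $\partial_n = \sum_{i=1}^n (-1)^i(d_i^0 - d_i^1)$, where $d_i^0$ deletes the $i$-th entry $(a,b_i)$ and $d_i^1$ replaces the tuple by $\big((a,b_1)\uline{\star}(a,b_i),\dots,(a,b_{i-1})\uline{\star}(a,b_i),(a,b_{i+1})\oline{\star}(a,b_i),\dots,(a,b_n)\oline{\star}(a,b_i)\big)$ and then deletes the (now $i$-th, equal to $(b_i,b_i)$-ish) entry. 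Then $\partial_{n-1}\partial_n$ is a sum over ordered pairs of slots, which I would organize into the four families $d^0d^0$, $d^0d^1$, $d^1d^0$, $d^1d^1$.

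The routine part is the simplicial-identity cancellation among the $d^\epsilon d^{\epsilon'}$ terms for a fixed pair of slots: for $i<j$ one has the usual relations $d_i^0 d_j^0 = d_{j-1}^0 d_i^0$, and mixed analogues $d_i^\epsilon d_j^{\epsilon'} = d_{j-1}^{\epsilon'} d_i^{\epsilon}$ (with the appropriate $\uline{\star}/\oline{\star}$ substitutions), each of which holds \emph{entrywise} and therefore produces two terms in $\partial_{n-1}\partial_n$ with opposite signs $(-1)^i(-1)^j$ vs.\ $(-1)^j(-1)^{i}$ after reindexing — these kill each other. I would check these identities slot by slot, noting that in the ranges $k<i$, $i<k<j$, $k>j$ the two sides agree trivially or after a single application of the fact that $\uline{\star}$ and $\oline{\star}$ by a fixed element are well-defined maps; the only slots requiring thought are those between $i$ and $j$, where one must track whether an entry gets hit by $\uline{\star}(a,b_i)$ first or by $\oline{\star}(a,b_j)$ first.

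The genuine content — and the step I expect to be the main obstacle — is the cancellation involving the $d^1 d^1$ family across a pair of slots $i<j$, because there the two ways of composing the $\star$-substitutions lead to the nested expressions appearing in axiom ($\mathcal{L}$3). Concretely, applying $d_j^1$ then $d_i^1$ shifts an entry $(a,b_k)$ (for $k$ outside $[i,j]$) to something like $\big((a,b_k)\uline{\star}(a,b_i)\big)\uline{\star}_{?}\big((a,b_j)\oline{\star}(a,b_i)\big)$, while the other composition order gives $\big((a,b_k)\uline{\star}(a,b_j)\big)\uline{\star}_{?}\big((a,b_i)\uline{\star}(a,b_j)\big)$, and equality of these is precisely ($\mathcal{L}$3)-(i); the entries with index in various sub-ranges relative to $i$ and $j$ similarly require ($\mathcal{L}$3)-(ii) and ($\mathcal{L}$3)-(iii), according to whether they are being pushed past via $\uline{\star}$ or $\oline{\star}$ on each side. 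I would handle this by fixing $i<j$, writing out the $j$-th (and $i$-th) surviving entry and a representative entry in each of the three ranges $k<i$, $i<k<j$, $k>j$, and matching each against exactly one of the three clauses of ($\mathcal{L}$3); the well-definedness clauses ($\mathcal{L}$1), ($\mathcal{L}$2) guarantee the ``diagonal'' entries (those indexed by $i$ or $j$ themselves) behave correctly. Once these entrywise equalities are in hand, the $d^1d^1$ terms pair off with signs $(-1)^{i+j}$ and cancel, and combined with the $d^0d^0$, $d^0d^1$, $d^1d^0$ cancellations this gives $\partial_{n-1}\partial_n=0$. Rather than grind through all index ranges explicitly, I would present the $k>j$ case of the $d^1d^1$ cancellation in full as the model computation and indicate that the remaining cases are entirely analogous, invoking ($\mathcal{L}$3)-(ii) and ($\mathcal{L}$3)-(iii) in place of ($\mathcal{L}$3)-(i).
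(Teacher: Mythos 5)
Your proposal is correct and is essentially the paper's own proof: the authors simply assert that the lemma ``is shown by a direct calculation as in the case of biquandle chain complexes,'' and your double-sum/presimplicial-identity bookkeeping, with the three clauses of ($\mathcal{L}$3) matched to the three index ranges $k<i$, $i<k<j$, $k>j$ in the $d^1d^1$ family, is exactly that calculation carried out. (One small expository slip: the displayed pair of expressions you match against ($\mathcal{L}$3)-(i) is the $k<i$ case, while the $k>j$ case you propose as the model computation is the one governed by ($\mathcal{L}$3)-(iii); the mathematics is unaffected.)
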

\begin{proof}
This is shown by a direct calculation as in the case of biquandle chain complexes, see \cite{CarterElhamdadiSaito04, CenicerosElhamdadiGreenNelson14}.
\end{proof}

Let $D_n(X)$ be a submodule of $C_n(X)$ which is generated by the elements of 
\[
\Big\{\big( (a,b_1),  \ldots , (a,b_n) \big) \in \bigcup_{a\in X
}  (\{a\} \times X)^n ~\Big|~ \mbox{ $b_i =b_{i+1}$ for some $i\in \{1, \ldots , n-1 \} $  }  \Big\}.
\]
\begin{lemma}
$D_*(X)=\{D_n(X), \partial_n\}_{n\in \mathbb Z}$ is a subchain complex of $C_*(X)$, i.e., for any $n \in \mathbb Z$, $\partial_n (D_n(X)) \subset D_{n-1} (X)$ holds.
\end{lemma}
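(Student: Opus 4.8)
The plan is the standard degeneracy-subcomplex computation, entirely parallel to the biquandle case cited after Lemma~\ref{lem:chain complex}. Since $D_n(X)$ is the $\mathbb Z$-span of the degenerate tuples, it suffices to show that $\partial_n$ carries each such generator into $D_{n-1}(X)$; so I would fix $x=\big((a,b_1),\ldots,(a,b_n)\big)$ with $b_j=b_{j+1}$ for some $j\in\{1,\ldots,n-1\}$ (the cases $n\le 1$ being vacuous since then $\partial_n=0$), and write $\partial_n(x)=\sum_{i=1}^n(-1)^i\,(F_i-G_i)$, where $F_i=\big((a,b_1),\ldots,\widehat{(a,b_i)},\ldots,(a,b_n)\big)$ is the deletion term and $G_i$ is the ``starred'' term, which in tribracket form reads $G_i=\big((b_i,[a,b_1,b_i]),\ldots,(b_i,[a,b_{i-1},b_i]),(b_i,[a,b_i,b_{i+1}]),\ldots,(b_i,[a,b_i,b_n])\big)$.

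First I would dispose of the summands with $i\notin\{j,j+1\}$. For such $i$ the deletion of the $i$-th slot leaves $(a,b_j)$ and $(a,b_{j+1})$ adjacent, so $F_i\in D_{n-1}(X)$; and for $G_i$ one observes that if $i<j$ then the $j$-th and $(j+1)$-th entries of $x$ are both acted on by $\oline{\star}(a,b_i)$, while if $i>j+1$ they are both acted on by $\uline{\star}(a,b_i)$, so in either case (using $b_j=b_{j+1}$ and the common first coordinate $a$) the two resulting entries coincide and remain adjacent after removing slot $i$; hence $G_i\in D_{n-1}(X)$ as well.

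It then remains to pair off the $i=j$ and $i=j+1$ summands, which are the only ones not individually in $D_{n-1}(X)$. The deletion terms satisfy $F_j=F_{j+1}$, since omitting $b_j$ or $b_{j+1}$ from $(b_1,\ldots,b_n)$ gives the same tuple when $b_j=b_{j+1}$. For the starred terms, comparing $G_j$ and $G_{j+1}$ entry by entry: the entries in slots $1,\ldots,j-1$ and $j+1,\ldots,n-1$ agree after substituting $b_j=b_{j+1}$, and in the single ``pivot'' slot the entry of $G_j$ is $(a,b_{j+1})\oline{\star}(a,b_j)$ while that of $G_{j+1}$ is $(a,b_j)\uline{\star}(a,b_{j+1})$, which are equal by property ($\mathcal{L}$1') (in tribracket form both entries are simply $(b_j,[a,b_j,b_j])$). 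Thus $G_j=G_{j+1}$, so $(-1)^j(F_j-G_j)+(-1)^{j+1}(F_{j+1}-G_{j+1})=0$, and combining with the previous paragraph we get $\partial_n(x)\in D_{n-1}(X)$. The only point requiring a moment's thought is recognizing that the $i=j$ and $i=j+1$ terms must be cancelled against each other rather than handled separately, and that ($\mathcal{L}$1') is exactly what makes the two pivot entries match; the rest is routine index bookkeeping.
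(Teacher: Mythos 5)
Your proof is correct and is exactly the direct calculation the paper alludes to (the paper simply cites the analogous biquandle computation without details): the terms with $i\notin\{j,j+1\}$ individually land in $D_{n-1}(X)$, and the $i=j$, $i=j+1$ terms cancel in pairs, with ($\mathcal{L}$1') supplying the equality of the two pivot entries. No gaps; the index bookkeeping for which entries receive $\uline{\star}$ versus $\oline{\star}$ is handled correctly.
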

\begin{proof}
This is shown by a direct calculation as in the case of biquandle subchain complexes, see \cite{CarterElhamdadiSaito04, CenicerosElhamdadiGreenNelson14}.
\end{proof}
\noindent Therefore the chain complex $$C_*^{\rm LB} (X)=\{C_n^{\rm LB}(X):=C_n(X)/D_n(X), \partial_n^{\rm LB}:=\partial_n\}_{n\in \mathbb Z}$$ is induced.
We call the homology group $H_n^{\rm LB} (X)$ of $C_*^{\rm LB} (X)$ the \textit{$n$th local  biquandle homology group} of $(X, \{\uline{\star}\} , \{\oline{\star}\})$.

For an abelian group $A$, we define the chain and cochain complexes by 
\[
\begin{array}{l}
C_n^{\rm LB}(X; A)=C_n^{\rm LB}(X) \otimes A, \quad \partial_n^{\rm LB} \otimes {\rm id}  \mbox{ and }\\[5pt]
C_{\rm LB}^n(X; A) ={\rm Hom}(C_n^{\rm LB}(X); A), \quad \delta^n_{\rm LB} \mbox{ s.t. }\delta^n_{\rm LB}(f)=f \circ \partial_{n+1}^{\rm LB}.
\end{array}
\]
Let $C_\ast^{\rm LB}(X; A)=\{C_n^{\rm LB}(X; A), \partial_n^{\rm LB}\otimes {\rm id}\}_{n\in \mathbb Z}$ and $C_{\rm LB}^\ast(X; A)=\{C_{\rm LB}^n(X; A), \delta^n_{\rm LB}\}_{n\in \mathbb Z}$. The \textit{nth homology group} $H_n^{\rm LB}(X; A)$ \textit{and nth cohomology group} $H^n_{\rm LB}(X; A)$ of $(X, \{\uline{\star}\} , \{\oline{\star}\})$ with coefficient group $A$ are defined by
\[
H_n^{\rm LB}(X; A)=H_n(C_\ast^{\rm LB}(X; A)) \qquad {\rm and} \qquad H_{\rm LB}^n(X; A)=H^n(C^\ast_{\rm LB}(X; A)).
\]
Note that we omit the coefficient group  $A$ if $A=\mathbb Z$ as usual. 

We remark that these definitions are anologous to those in biquandle (co)homology theory.

\subsection{Semi-arc colorings of link diagrams, cocycle invariants}\label{subsection:linkinvariant}
Let $(X, \{\uline{\star}\} , \{\oline{\star}\})$ be a local biquandle, and $(X,[\,])$ the corresponding knot-theoretic horizontal-ternary-quasigroup of $(X, \{\uline{\star}\} , \{\oline{\star}\})$. Let $D$ be a connected diagram of a link.
\begin{definition}
A \textit{semi-arc $X^2$-coloring} of $D$ is a map $C: \mathcal{SA}(D) \to X^2$ satisfying the following condition:
\begin{itemize}
\item For a crossing composed of under-semi-arcs $u_1, u_2$ and over-semi-arcs $o_1, o_2$ as depicted in Figure~\ref{coloring1}, let $C(u_1)=(a_1,b), C(o_1)=(a_2, c)$. Then 
\begin{itemize}
\item $a_1=a_2$,  
\item $C(u_2) = C(u_1) \uline{\star} C(o_1)= (a,b) \uline{\star} (a,c) = (c, [a,b,c])$, and 
\item $C(o_2) = C(o_1) \oline{\star} C(u_1)=(a,c) \oline{\star} (a,b) = (b, [a,b,c])$
\end{itemize}
hold,  where $a= a_1 =a_2$, see Figure~\ref{coloring1}.
\begin{figure}
  \begin{center}
    \includegraphics[clip,width=10.0cm]{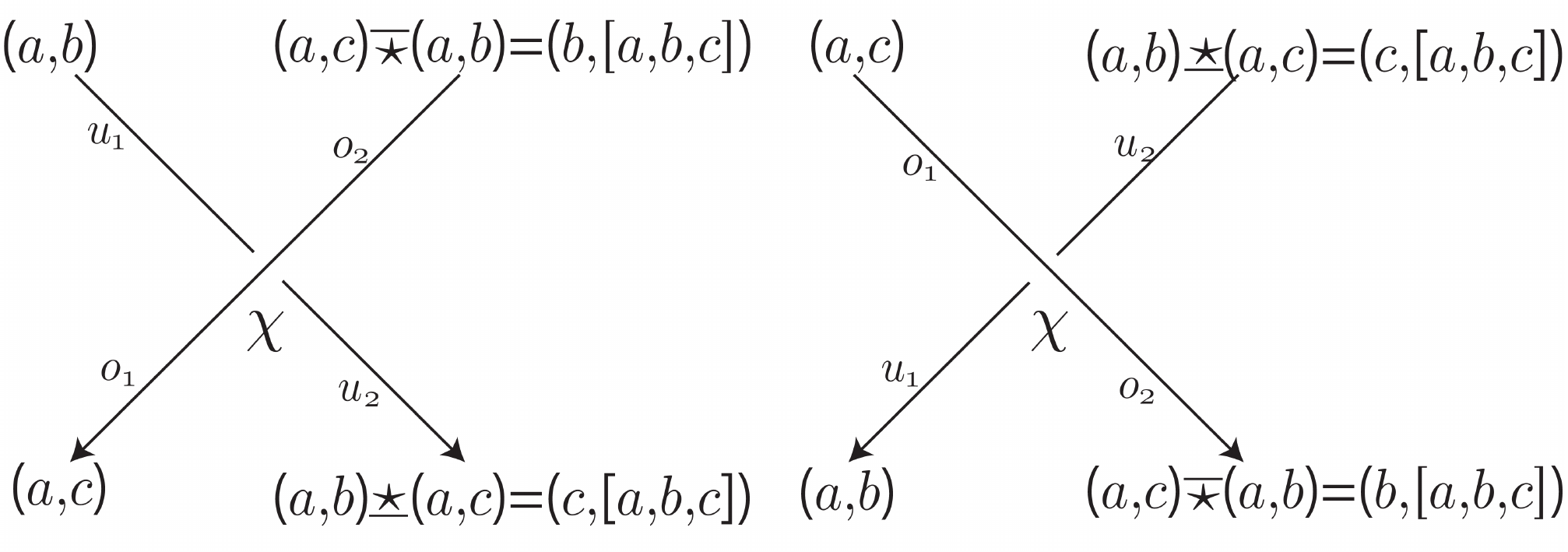}
    \label{coloring1}
  \end{center}
\end{figure}
\end{itemize}
We denote by ${\rm Col}_{X^2}^{\rm SA} (D) $ the set of semi-arc $X^2$-colorings of $D$. We call $C(x)$ for a semi-arc $x$ the {\it color} of $x$. 
We call a pair $(D,C)$ of a diagram $D$ and a semi-arc $X^2$-coloring $C$ of $D$ a {\it semi-arc $X^2$-colored diagram}.
\end{definition}

\begin{remark}\label{Rem:coloringtranslation}
As shown in Figure~\ref{coloring5}, a semi-arc $X^2$-coloring $C$ of a connected diagram $D$ induces a semi-arc $X$-coloring $\widetilde{C}$ of the $2$-parallel $\widetilde{D}$, that is,  a map $\widetilde{C}: \mathcal{SA}(\widetilde{D}) \to X$ is obtained from $C$ by setting $\widetilde{C}(x^{(+)})=a$ and $\widetilde{C}(x^{(-)})=b$  for each semi-arc $x$ of $D$ such that  $C(x)=(a,b)$.  
Moreover, the semi-arc $X$-coloring $\widetilde{C}$ of $\widetilde{D}$ induces a region $X$-coloring $\overline{C}$ of $D$, that is, a map $\overline{C}: \mathcal{R}(D) \to X$ is obtained  from $\widetilde{C}$ by setting $\overline{C}(r) = \widetilde{C}(x^{(\varepsilon)})$ for a region $r\in \mathcal{R}(D)$ and a semi-arc $x^{(\varepsilon)} \in \mathcal{SA}(\widetilde{D})$ that is on the boundary of $r$, where this construction is well-defined only when we use a connected diagram. 
We also note that the induced region coloring satisfies the condition depicted in Figures~\ref{coloring2} and \ref{coloring3}, see also Definition~\ref{def:regioncoloring1}.
Thus we have a map $T: {\rm Col}_{X^2}^{\rm SA} (D) \to {\rm Col}_{X}^{\rm R}(D); C \mapsto \overline{C}$, which we call the {\it translation map  from ${\rm Col}_{X^2}^{\rm SA} (D) $ to ${\rm Col}_{X}^{\rm R}(D)$}, where ${\rm Col}_{X}^{\rm R}(D)$ means the set of region $X$-colorings of $D$ that satisfy the condition depicted in Figures~\ref{coloring2}  and \ref{coloring3}.
For  a semi-arc $X^2$-coloring $C$,  we call $\overline{C}=T(C)$ the {\it corresponding region  $X$-coloring} of $C$ through $T$.  
 
Since the translation map $T$ is invertible, we have the inverse translation map $T^{-1}:  {\rm Col}_{X}^{\rm R}(D) \to {\rm Col}_{X^2}^{\rm SA} (D)$, and 
for a region $X$-coloring $C$, we call $\overline{C}=T^{-1} (C)$ the {\it corresponding semi-arc $X^2$-coloring } of $C$ through $T^{-1}$.   
\begin{figure}
  \begin{center}
    \includegraphics[clip,width=12cm]{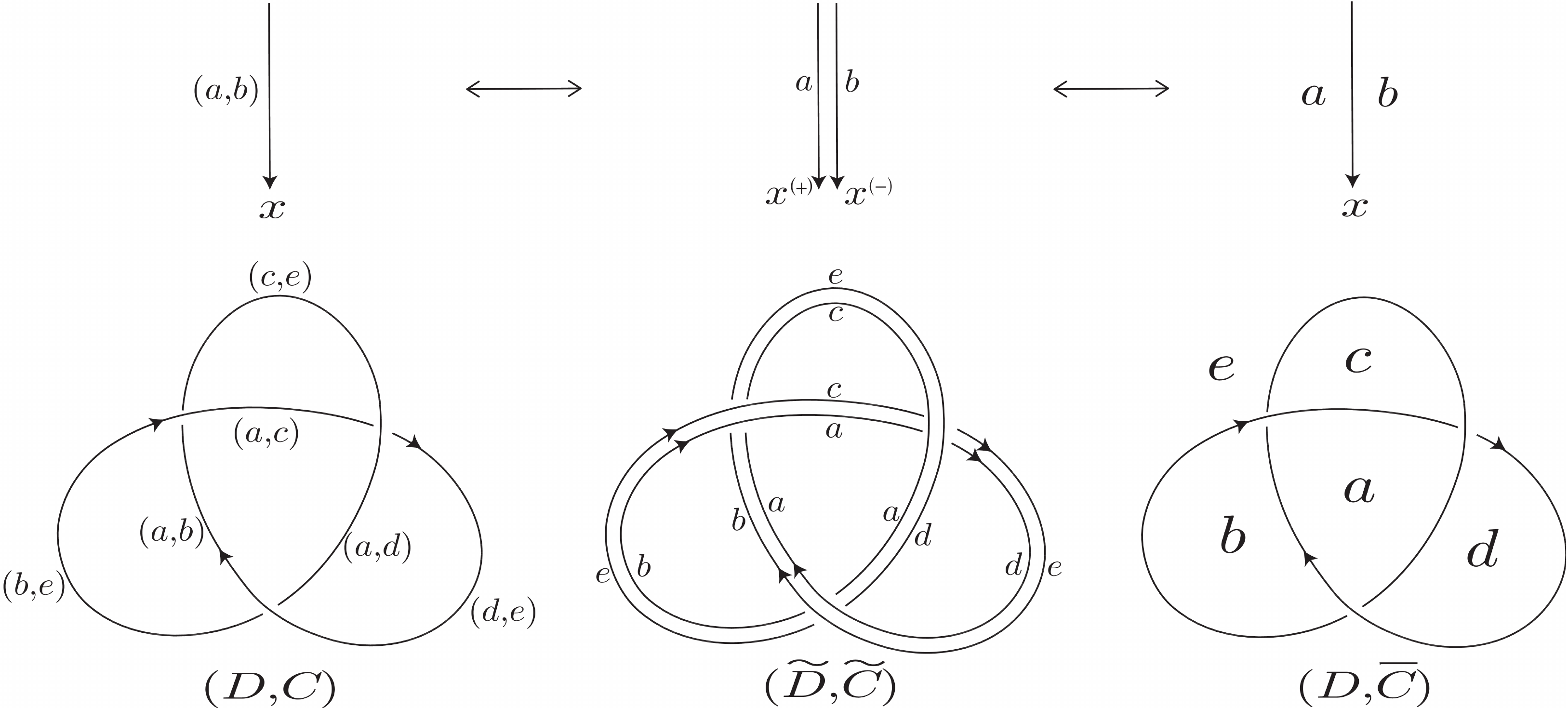}
    \label{coloring5}
  \end{center}
\end{figure}
\end{remark}

\begin{proposition}\label{prop:coloring1}
Let $D$ and $D'$ be connected diagrams of links. 
If $D$ and $D'$ represent the same link, then there exists a bijection between  
${\rm Col}_{X^2}^{\rm SA} (D) $ and ${\rm Col}_{X^2}^{\rm SA} (D')  $.
\end{proposition}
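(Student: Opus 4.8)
The plan is to show invariance of $|{\rm Col}_{X^2}^{\rm SA}(D)|$ under the five oriented Reidemeister moves, restricted to the class of connected diagrams; since by the result recalled in Section~\ref{Preliminaries} any two connected diagrams of the same link are joined by a finite sequence of oriented Reidemeister moves through connected diagrams, this suffices. For each move $R$ relating diagrams $D$ and $D'$ that agree outside a disk $\Delta$, I would construct an explicit bijection ${\rm Col}_{X^2}^{\rm SA}(D)\to{\rm Col}_{X^2}^{\rm SA}(D')$ by sending a coloring $C$ to the coloring $C'$ that agrees with $C$ on every semi-arc not meeting $\Delta$, and is forced on the semi-arcs inside $\Delta$ by the local coloring condition at the crossings of $D'$. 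The content is that this local reassignment (a) is well-defined, i.e. the boundary colors of $\Delta$ in $D'$ are exactly those inherited from $C$ on $\partial\Delta$, and (b) its inverse is the analogous reassignment for $R^{-1}$.

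The Reidemeister I move uses axiom ($\mathcal{L}$1$'$), which says $(a,b)\uline{\star}_a(a,b)=(a,b)\oline{\star}_a(a,b)$: a kink forces the two new semi-arcs to carry colors determined by this self-interaction, and the outgoing color equals the incoming one. For Reidemeister II, the two crossings are of opposite sign, and one checks that feeding the output of the first crossing into the second returns the original pair; here the relevant fact is the bijectivity statements in ($\mathcal{L}$2)-(i),(ii),(iii) (equivalently, that $\uline{\star}$ and $\oline{\star}$ are "invertible" in the appropriate slots, with the inverse given by the corresponding vertical-tribracket via Lemma~\ref{lem:tribrackets1}). For the Reidemeister III moves, the equality of the colors assigned to the central triangle's semi-arcs after pushing the strand across is precisely axiom ($\mathcal{L}$3)-(i),(ii),(iii) of Definition~\ref{def:localbiquandle}; different versions of RIII (depending on orientations and over/under data) invoke different parts of ($\mathcal{L}$3), but all standard conventions reduce to these three relations, possibly after also using ($\mathcal{L}$2) to relate the move to its mirror. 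I would organize this by treating one representative of RII and one of RIII in detail and noting that the remaining variants are handled verbatim, or alternatively by passing through the equivalent region-coloring picture of Remark~\ref{Rem:coloringtranslation} where these are the moves governing horizontal-tribracket colorings (Figures~\ref{coloring2}, \ref{coloring3}, \ref{RmoveIII}).

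A subtle point, and the step I expect to require the most care, is the role of \emph{connectedness}. A single semi-arc $X^2$-coloring does not automatically descend to a well-defined region coloring unless $D$ is connected (as flagged in Remark~\ref{Rem:coloringtranslation}); but the Reidemeister moves themselves are purely local and do not see connectedness, so the bijection ${\rm Col}_{X^2}^{\rm SA}(D)\to{\rm Col}_{X^2}^{\rm SA}(D')$ is defined move-by-move regardless. The only place connectedness enters is in guaranteeing that the \emph{sequence} of intermediate diagrams can be chosen connected, which is exactly the statement quoted from the literature in Section~\ref{Preliminaries}; I would simply cite that and compose the per-move bijections along such a sequence. One should also note the degenerate check that the bijections are compatible with the identification of semi-arcs away from $\Delta$ so that composing along the sequence makes sense — this is immediate from the construction since each bijection is the identity outside its move disk.

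Finally, I would remark that an equivalent and perhaps cleaner route is to invoke the translation map $T$ of Remark~\ref{Rem:coloringtranslation}: $T$ identifies ${\rm Col}_{X^2}^{\rm SA}(D)$ with ${\rm Col}_X^{\rm R}(D)$, and region $X$-colorings satisfying the conditions of Figures~\ref{coloring2} and \ref{coloring3} are invariant under oriented Reidemeister moves by the very derivation of the tribracket axioms ($\mathcal{H}$1),($\mathcal{H}$2) from those moves (and the correspondence of Proposition~\ref{prop:localbqandhorizontaltqg}). Either way the essential input is that Definition~\ref{def:localbiquandle} was engineered so that ($\mathcal{L}$1)--($\mathcal{L}$3) encode exactly the Reidemeister moves; the proof is the bookkeeping that makes this precise.
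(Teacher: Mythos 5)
Your proposal follows essentially the same route as the paper's proof: for each oriented Reidemeister move one defines $C'$ to agree with $C$ outside the move disk and observes that the colors inside are uniquely forced by the local coloring condition (the paper cites Figure~\ref{RmoveII} for this), composing these per-move bijections along a sequence of connected diagrams. Your additional bookkeeping of which axioms ($\mathcal{L}1'$), ($\mathcal{L}2$), ($\mathcal{L}3$) govern which move type is correct and simply makes explicit what the paper leaves to the figure.
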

\begin{proof}
Let $D$ and $D'$  be connected diagrams such that $D'$ is obtained from $D$ by a single Reidemeister move. 
Let $E$ be a $2$-disk in which the move is applied. 
Let $C$ be a semi-arc $X^2$-coloring of $D$. 
We define a semi-arc $X^2$-coloring $C'$ of $D'$, corresponding to $C$, by $C'(e) = C(e)$ for a
semi-arc $e$ appearing in the outside of $E$. Then the colors of the semi-arcs appearing in $E$,
by $C'$, are uniquely determined, see Figure~\ref{RmoveII}. Thus we have a bijection ${\rm Col}_{X^2}^{\rm SA} (D) \to {\rm Col}_{X^2}^{\rm SA} (D') $ that maps $C$ to $C'$.
\begin{figure}
  \begin{center}
    \includegraphics[clip,width=12cm]{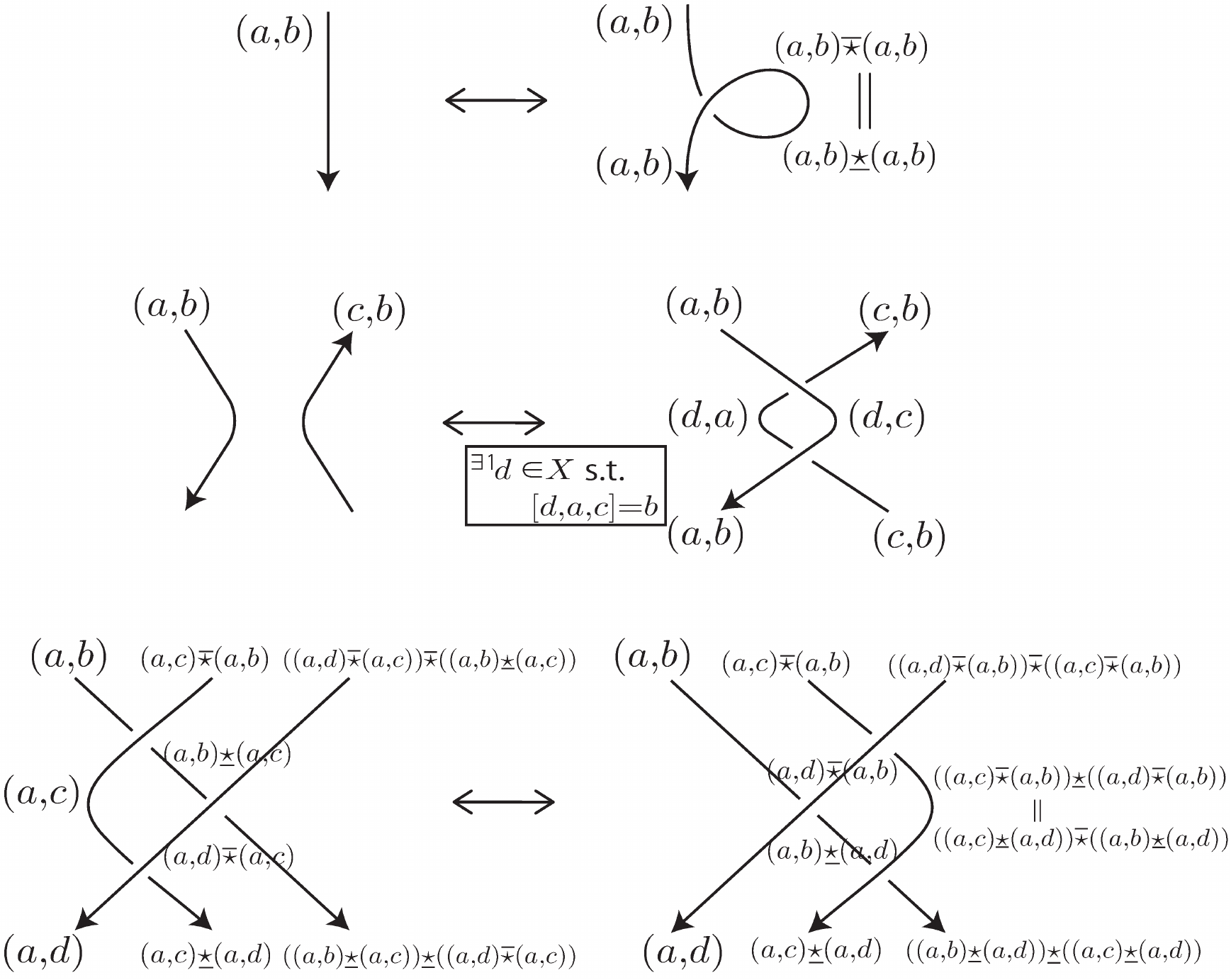}
    \label{RmoveII}
  \end{center}
\end{figure}
\end{proof}

Next, we show how to obtain a cocycle invariant by using the semi-arc $X^2$-colorings of a connected diagram.   

Let $C$ be a semi-arc $X^2$-coloring of $D$.
We define the local chain 
$w(D, C; \chi) \in C^{\rm LB}_2 (X)$ at each crossing $\chi$ by 
$$w(D, C; \chi) ={\rm sign}(\chi) \big((a,b), (a,c)\big)$$ when  $C(u_1)=(a,b)$ and $C(o_1)=(a,c)$, where $u_1$ and $o_1$ are the under-semi-arc and the over-semi-arc  as depicted in Figure~\ref{coloring1}.
We define a chain by 
$$\displaystyle W(D, C)=\sum_{\chi \in \{\mbox{\small crossings of $D$}\}} w(D, C; \chi) \in C^{\rm LB}_2 (X).$$ 

\begin{lemma}
The chain $W(D, C)$ is a  $2$-cycle of $C_\ast ^{\rm LB}(X)$. 
\end{lemma}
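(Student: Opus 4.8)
The plan is to show that $\partial_2^{\rm LB}(W(D,C)) = 0$ by a purely local argument: since $W(D,C) = \sum_\chi w(D,C;\chi)$ and $\partial_2^{\rm LB}$ is linear, it suffices to understand how $\partial_2^{\rm LB}$ acts on the generator $\big((a,b),(a,c)\big)$ and then to match up the resulting terms with the semi-arcs of $D$. Concretely, for a generator we have
\[
\partial_2^{\rm LB}\big((a,b),(a,c)\big) = -\big((a,c)\big) + \big((a,b)\big) + \big((b,[a,b,c])\big) - \big((c,[a,b,c])\big),
\]
(using the second form of $\partial_n$ from the excerpt, with the sign conventions there), i.e.\ the two ``incoming'' colors and the two ``outgoing'' colors of the crossing, taken with appropriate signs. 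Multiplying by ${\rm sign}(\chi)$, each $1$-chain $(x)$ for $x \in X^2$ appearing in $\partial_2^{\rm LB} w(D,C;\chi)$ is indexed by a semi-arc of $D$ meeting $\chi$, with its color as given by $C$ and a sign determined by whether the semi-arc is over/under, incoming/outgoing, and the sign of $\chi$.

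The key step is then a bookkeeping argument. Each semi-arc $e$ of $D$ has exactly two endpoints, each lying on a crossing (or, in the degenerate cases of a Reidemeister-I-type kink, the two endpoints may be the same crossing — but connectedness and the fact that $D$ is a genuine link diagram with at least one crossing let us handle this; alternatively one checks directly that a kink contributes a chain in $D_1(X)$, which is zero in $C_1^{\rm LB}(X)$). So in the sum $\partial_2^{\rm LB}(W(D,C)) = \sum_\chi {\rm sign}(\chi)\,\partial_2\big((a_\chi,b_\chi),(a_\chi,c_\chi)\big)$, the $1$-chain with index $C(e)$ appears exactly twice, once from each endpoint of $e$. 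I would check — by inspecting the local picture at a crossing (Figure~\ref{coloring1}) together with the formula above — that at one endpoint $e$ appears as an ``incoming'' semi-arc and at the other as an ``outgoing'' semi-arc, and that the two signs are opposite, so the two contributions cancel. This uses precisely the relations $C(u_2) = (c,[a,b,c])$, $C(o_2) = (b,[a,b,c])$ from the coloring condition, which guarantee that the color of $e$ recorded from the crossing at its ``head'' agrees with the color recorded from the crossing at its ``tail.''

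The main obstacle is making the sign/orientation bookkeeping precise: one must carefully set up, for each of the four semi-arcs $u_1,u_2,o_1,o_2$ at a positive crossing and at a negative crossing, whether it enters the boundary formula with a $+$ or $-$ and whether it is an ``$(a,b)$-slot'' (under) or an ``$(a,c)$-slot'' (over) term, and then verify the global cancellation. This is entirely analogous to the proof that the state-sum $2$-chain is a cycle in biquandle homology, so I would either carry out the four-case check explicitly or simply invoke the analogy, as the paper does elsewhere (cf.\ the proof of Lemma~\ref{lem:chain complex}). A clean way to organize it: define, for each oriented semi-arc $e$ with color $(a,b)$, its contribution at each of its two endpoints, observe that an ``outgoing'' endpoint contributes $+({\rm sign}\,\chi)(a,b)$-type terms while an ``incoming'' endpoint at a crossing where $e$ plays the same over/under role contributes $-({\rm sign}\,\chi)(a,b)$-type terms, and note ${\rm sign}$ is the same at both endpoints since the crossing sign is a local datum; hence the total is $0$ in $C_1^{\rm LB}(X)$.
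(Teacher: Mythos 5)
Your overall approach is the same as the paper's: compute $\partial_2^{\rm LB}$ on a single generator, observe that the four resulting $1$-chains are exactly the colors of the four semi-arcs at the crossing (this is where the coloring conditions $C(u_2)=(c,[a,b,c])$, $C(o_2)=(b,[a,b,c])$ enter), and then cancel the two contributions of each semi-arc coming from its two endpoints. Your local computation of $\partial_2^{\rm LB}\big((a,b),(a,c)\big)$ agrees with the paper's, and your earlier formulation of the cancellation --- each semi-arc appears once as ``incoming'' and once as ``outgoing'' with opposite signs --- is the correct mechanism and is precisely what the paper asserts.

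One correction to your final ``clean way to organize it'': the claim that ``${\rm sign}$ is the same at both endpoints since the crossing sign is a local datum'' is false --- the two endpoints of a semi-arc generally lie at \emph{different} crossings, which may have different signs, so an argument that needs ${\rm sign}(\chi_1)={\rm sign}(\chi_2)$ would break down. The bookkeeping that actually works (and that the paper uses) is the opposite: after multiplying the local boundary by ${\rm sign}(\chi)$, at \emph{every} crossing, positive or negative, the positive terms are carried by the incoming semi-arcs and the negative terms by the outgoing ones. With that normalization the crossing signs at the two ends of a semi-arc never need to be compared; each semi-arc simply receives one $+$ and one $-$ of its color, and the sum vanishes in $C_1^{\rm LB}(X)$. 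So keep your earlier phrasing of the cancellation and drop the final sentence's sign claim; with that fix the argument matches the paper's proof.
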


\begin{proof}
This can be shown similarly as in the case of biquandles. 
More precisely,  for each positive crossing $\chi$ as depicted in the left of Figure~\ref{coloring1}, we have 
\begin{align*}
\partial_2^{\rm LB} \Big(w(D, C; \chi)\Big) &=\partial_2^{\rm LB} \Big(\big((a, b), (a, c)\big)\Big) \notag\\
&=-(a, c)+(a, c)\oline{\star}(a, b)+(a, b)-(a, b)\uline{\star}(a, c) \notag\\
&=-(a, c)+(b, [a, b, c])+(a, b)-(c, [a, b, c]),
\end{align*}
the positive (resp. negative) terms of which can be assigned to the incoming (resp. outgoing) semi-arcs around $\chi$ so as that for each semi-arc around $\chi$ the color of the semi-arc coincides with  the assigned pair.
For a negative crossing, the same thing holds.
This ensures that the two ends of each semi-arc have the same pair of elements of $X$, but with opposite signs. Thus, we have $\partial_2^{\rm LB}(W(D, C))=0$.
\end{proof}

Let $A$ be an abelian group. For a $2$-cocycle $\theta \in C^2_{\rm LB}(X; A)$, we define
\[
\begin{array}{l}
\mathcal{H}(D)=\{[W(D, C)] \in H^{\rm LB}_2(X) \ | \ C \in {\rm Col}_{X^2}^{\rm SA} (D) \}, and \\[5pt]
\Phi_{\theta}(D)=\{\theta(W(D, C)) \in A \ | \ C \in {\rm Col}_{X^2}^{\rm SA} (D) \}
\end{array}
\]
as multisets. Then we have the following theorem:
\begin{theorem}
$\mathcal{H}(D)$ and $\Phi_{\theta}(D)$ are invariants of $L$.
\end{theorem}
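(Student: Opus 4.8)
The plan is to show that both $\mathcal{H}(D)$ and $\Phi_\theta(D)$ are unchanged when $D$ is replaced by a connected diagram $D'$ obtained from $D$ by a single oriented Reidemeister move; invariance under arbitrary equivalences then follows from the fact, recalled in Section~\ref{Preliminaries}, that any two connected diagrams of the same link are joined by a finite sequence of connected diagrams and oriented Reidemeister moves. Fix such a move, performed inside a $2$-disk $E$, and let $\varphi\colon {\rm Col}_{X^2}^{\rm SA}(D)\to{\rm Col}_{X^2}^{\rm SA}(D')$ be the bijection of Proposition~\ref{prop:coloring1}, which agrees with the identity on semi-arcs outside $E$. Since $\mathcal{H}(D)$ and $\Phi_\theta(D)$ are multisets indexed by colorings, and $\varphi$ is a bijection, it suffices to prove that for every $C\in{\rm Col}_{X^2}^{\rm SA}(D)$ the classes $[W(D,C)]$ and $[W(D',\varphi(C))]$ coincide in $H^{\rm LB}_2(X)$; evaluating the $2$-cocycle $\theta$ on equal homology classes then yields $\theta(W(D,C))=\theta(W(D',\varphi(C)))$, giving $\Phi_\theta(D)=\Phi_\theta(D')$ at the same time.

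To compare $W(D,C)$ and $W(D',\varphi(C))$, note that crossings outside $E$ contribute identical local chains to both, so only the crossings inside $E$ matter. I would treat the three Reidemeister types in turn. For a type~I move, the single new crossing has its under- and over-semi-arcs meeting with equal colors, so its local chain is $\pm\big((a,b),(a,b)\big)$, which lies in $D_2(X)$ and hence vanishes in $C^{\rm LB}_2(X)$; thus $W(D',\varphi(C))=W(D,C)$ already at the chain level. For a type~II move, the two new crossings have opposite signs and, reading off the colors from Figure~\ref{coloring1} together with axioms ($\mathcal{L}$1)--($\mathcal{L}$2), contribute local chains that are negatives of one another, so again $W(D',\varphi(C))=W(D,C)$ in $C^{\rm LB}_2(X)$. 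For a type~III move, the difference $W(D',\varphi(C))-W(D,C)$ is a sum of six generators (three crossings on each side); I would exhibit an explicit $3$-chain $\eta\in C^{\rm LB}_3(X)$ — built from the triple of colors surrounding the triple point — with $\partial_3^{\rm LB}(\eta)$ equal to this difference. Writing out $\partial_3^{\rm LB}$ of the relevant generator $\big((a,b),(a,c),(a,d)\big)$ and matching terms against the two triples of crossings, using axioms ($\mathcal{L}$3)-(i),(ii),(iii) of Definition~\ref{def:localbiquandle} to identify the colors produced on the two sides, produces exactly the desired boundary; hence $[W(D,C)]=[W(D',\varphi(C))]$ in $H^{\rm LB}_2(X)$.

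The main obstacle is the type~III bookkeeping: one must correctly track, via the coloring rule of Figure~\ref{coloring1}, which pairs $(X^2$-colors$)$ sit on each of the six semi-arcs entering the three crossings before and after the move, and then verify that the corresponding six degree-$2$ generators assemble into $\partial_3^{\rm LB}$ of a single degree-$3$ generator after applying the three parts of ($\mathcal{L}$3). This is a direct but lengthy computation entirely parallel to the proof that Boltzmann weights give biquandle cocycle invariants (cf. the references cited in Lemma~\ref{lem:chain complex}), and no new idea is needed beyond carefully invoking ($\mathcal{L}$3). Everything else — the reduction to single moves, the vanishing of the diagonal contribution in types~I and the cancellation in type~II — is immediate from the axioms already established and from Proposition~\ref{prop:coloring1}.
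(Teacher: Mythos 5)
Your proposal is correct and follows essentially the same route as the paper: reduce to single oriented Reidemeister moves between connected diagrams via Proposition~\ref{prop:coloring1}, and for type~III show that $W(D',C')-W(D,C)=\partial_3^{\rm LB}\big((a,b),(a,c),(a,d)\big)$, which is exactly the computation the paper carries out. The paper writes out only the type~III case and leaves types~I and~II to the reader; your sketches of those cases (degeneracy in $D_2(X)$ for type~I, sign cancellation for type~II) are the intended arguments.
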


\begin{proof}
It is sufficient to show that $[W(D, C)] \in H^{\rm LB}_2(X)$ is unchanged under each semi-arc $X^2$-colored Reidemeister move. Here we show this property for the case of a Reidemeister move of type III. We leave the proof of the other cases to the reader.

\begin{figure}
  \begin{center}
    \includegraphics[clip,width=11cm]{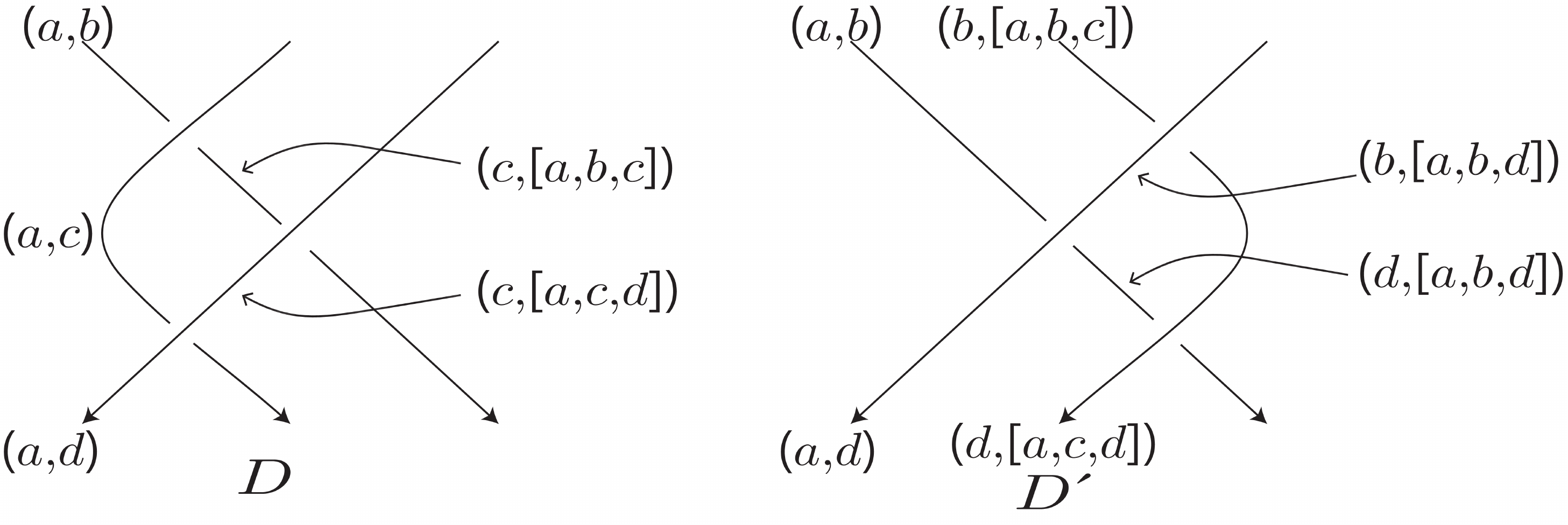}
    \label{RmoveIII2}
  \end{center}
\end{figure}
Let $(D, C)$ and $(D', C')$ be semi-arc $X^2$-colored, connected diagrams of $L$ that differ by the Reidemeister move of type III shown in Figure~\ref{RmoveIII2}. We then have
\begin{eqnarray*}
&&W(D, C) \\
&&=\big((a, b), (a, c)\big)+\big((a, c), (a, d)\big)+\big((c, [a, b, c]), (c, [a, c, d])\big)+\mathcal{C}\\
&&=\big((a, b), (a, c)\big)+\big((a, c), (a, d)\big)+\big((a, b)\uline{\star}(a, c), (a, d)\oline{\star}(a, c)\big)+\mathcal{C},
\end{eqnarray*}
\begin{eqnarray*}
&&W(D', C')\\
&&=\big((a, b), (a, d)\big)+\big((b, [a, b, c]), (b, [a, b, d])\big)+\big((d, [a, b, d]), (d, [a, c, d])\big)+\mathcal{C}\\
&&=\big((a, b), (a, d)\big)+\big((a, c)\oline{\star}(a, b), (a, d)\oline{\star}(a, b)\big)+\big((a, b)\uline{\star}(a, d), (a, c)\uline{\star}(a, d)\big)+\mathcal{C},
\end{eqnarray*}
for some chain $\mathcal{C}$ in $C_2^{\rm LB}(X)$. On the other hand,
\begin{eqnarray*}
\partial_3^{\rm LB}\Big(\big((a, b), (a, c), (a, d)\big)\Big)&=&-\big((a, c), (a, d)\big)+\big((a, c)\oline{\star}(a, b), (a, d)\oline{\star}(a, b)\big)\\
&&+\big((a, b), (a, d)\big)-\big((a, b)\uline{\star}(a, c), (a, d)\oline{\star}(a, c)\big)\\
&&-\big((a, b), (a, c)\big)+\big((a, b)\uline{\star}(a, d), (a, c)\uline{\star}(a, d)\big),
\end{eqnarray*}
which implies that 
$$W(D', C')-W(D, C)=\partial_3^{\rm LB}\Big(\big((a, b), (a, c), (a, d)\big)\Big) \in \rm{Im} \, \partial_3^{\rm LB}.$$ Therefore we have $[W(D, C)]=[W(D', C')] \in H^{\rm LB}_2(X)$.
\end{proof}

\begin{proposition}
For cohomologous $2$-cocycles $\theta, \theta' \in C^2_{\rm LB}(X; A)$, we have $\Phi_\theta(D)=\Phi_{\theta^{'}}(D)$.
\end{proposition}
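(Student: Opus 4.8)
The plan is to reduce the statement to the fact---established in the lemma immediately preceding---that $W(D,C)$ is a $2$-cycle of $C_\ast^{\rm LB}(X)$ for every semi-arc $X^2$-coloring $C$. The point is that the pairing $\theta \mapsto \theta(W(D,C))$ only depends on the class of $\theta$ in $H^2_{\rm LB}(X;A)$ precisely because it is being evaluated against a cycle.

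Concretely, first I would unwind what it means for $\theta$ and $\theta'$ to be cohomologous: there is a $1$-cochain $\phi \in C^1_{\rm LB}(X;A)$ with $\theta - \theta' = \delta^1_{\rm LB}(\phi)$. Next, I would fix an arbitrary coloring $C \in {\rm Col}_{X^2}^{\rm SA}(D)$ and compute, using the definition $\delta^1_{\rm LB}(\phi) = \phi \circ \partial_2^{\rm LB}$,
\[
\theta(W(D,C)) - \theta'(W(D,C)) = \delta^1_{\rm LB}(\phi)\big(W(D,C)\big) = \phi\big(\partial_2^{\rm LB}(W(D,C))\big).
\]
Since $W(D,C)$ is a $2$-cycle, $\partial_2^{\rm LB}(W(D,C)) = 0$, so the right-hand side is $\phi(0) = 0$, giving $\theta(W(D,C)) = \theta'(W(D,C))$.

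Finally, since this equality holds for each individual $C$, the multisets $\Phi_\theta(D) = \{\theta(W(D,C)) \mid C \in {\rm Col}_{X^2}^{\rm SA}(D)\}$ and $\Phi_{\theta'}(D) = \{\theta'(W(D,C)) \mid C \in {\rm Col}_{X^2}^{\rm SA}(D)\}$ are indexed by the same set ${\rm Col}_{X^2}^{\rm SA}(D)$ and have equal entries, hence coincide as multisets. I do not anticipate any real obstacle here; the only thing to be careful about is that the coefficient evaluation is genuinely being applied to a cycle, which is exactly the reason the $2$-cycle lemma is invoked, and that everything is phrased at the level of the quotient complex $C_\ast^{\rm LB}(X)$ so that $\partial_2^{\rm LB}$ (rather than $\partial_2$) is the relevant boundary map.
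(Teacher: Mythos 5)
Your proposal is correct and follows essentially the same argument as the paper: write $\theta-\theta'=\delta^1_{\rm LB}(\phi)=\phi\circ\partial_2^{\rm LB}$ for some $\phi\in C^1_{\rm LB}(X;A)$, evaluate on $W(D,C)$, and use the fact that $W(D,C)$ is a $2$-cycle to conclude each value vanishes, hence the multisets coincide. No issues.
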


\begin{proof}
Since $\theta$ and $\theta'$ are cohomologous, we have $\theta - \theta' \in {\rm Im}  \ \delta^1$, and hence, there exists  a homomorphism $\phi : C_1^{\rm LB}(X) \to A$ such that $\theta - \theta'=\delta^1_{\rm LB}(\phi )=\phi \circ \partial_2^{\rm LB}$. Then for each semi-arc $X^2$-coloring $C$ of $D$, we have
\begin{align*}
\theta(W(D, C)) - \theta'(W(D, C))&=(\theta - \theta')(W(D, C))\\
&=(\phi \circ \partial_2^{\rm LB}) (W(D, C))\\
&=\phi ( \partial_2^{\rm LB} (W(D, C))\\
&=0.
\end{align*}
Thus we have  $\Phi_\theta(D)=\Phi_{\theta^{'}}(D)$.
\end{proof}

\subsection{Semi-sheet colorings of surface-link diagrams, cocycle invariants}\label{subsection:surfaceinvariant}
For surface-links, we can also define semi-sheet $X^2$-colorings for diagrams and cocycle invariants. 
In this subsection, we briefly show these definitions and similar  properties.

Let $(X, \{\uline{\star}\} , \{\oline{\star}\})$ be a local biquandle, and $(X,[\,])$ the corresponding knot-theoretic horizontal-ternary-quasigroup of $(X, \{\uline{\star}\} , \{\oline{\star}\})$. Let $D$ be a connected diagram of a surface-link.
\begin{definition}
A \textit{semi-sheet $X^2$-coloring} of $D$ is a map $C: \mathcal{SS}(D) \to X^2$ satisfying the following condition:
\begin{itemize}
\item For a double point curve composed of under-semi-sheets $u_1, u_2$ and over-semi-sheets $o_1, o_2$ as depicted in Figure~\ref{doublepoint1}, let $C(u_1)=(a_1,b), C(o_1)=(a_2, c)$. Then 
\begin{itemize}
\item $a_1=a_2$,  
\item $C(u_2) = C(u_1) \uline{\star} C(o_1)= (a,b) \uline{\star} (a,c) = (c, [a,b,c])$, and 
\item $C(o_2) = C(o_1) \oline{\star} C(u_1)=(a,c) \oline{\star} (a,b) = (b, [a,b,c])$
\end{itemize}
hold,  where $a= a_1 =a_2$, see Figure~\ref{doublepoint1}.
\begin{figure}
  \begin{center}
    \includegraphics[clip,width=5.0cm]{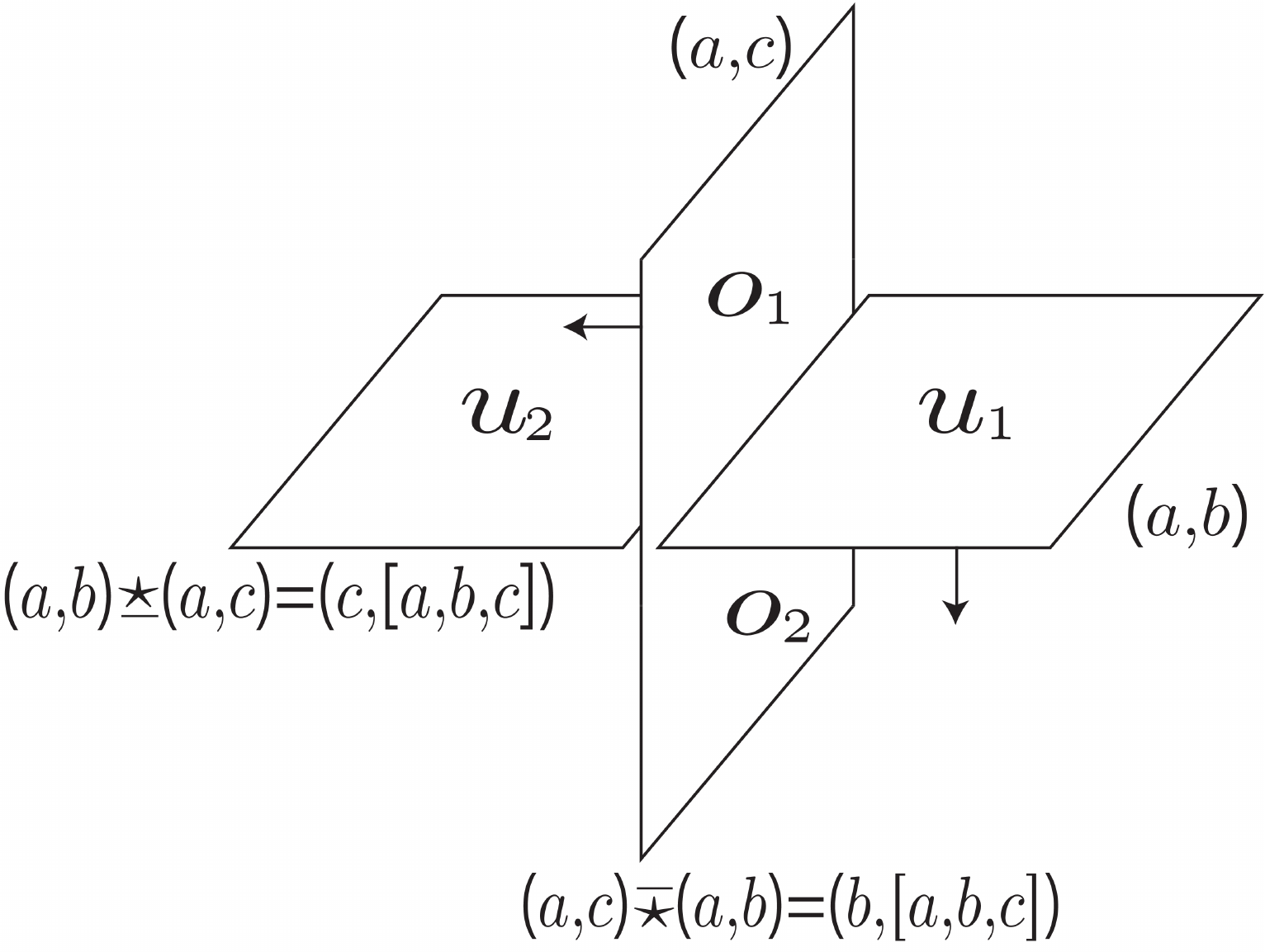}
    \label{doublepoint1}
  \end{center}
\end{figure}
\end{itemize}
We denote by ${\rm Col}_{X^2}^{\rm SS} (D) $ the set of semi-sheet $X^2$-colorings of $D$. We call $C(x)$ for a semi-sheet $x$ the {\it color} of $x$. We call a pair $(D,C)$ of a diagram $D$ and a semi-sheet $X^2$-coloring $C$ of $D$ a {\it semi-sheet $X^2$-colored diagram}.
\end{definition}
\begin{remark}\label{Rem:coloringtranslation2}
As shown in Remark~\ref{Rem:coloringtranslation} and Figure~\ref{doublepoint2},  we have the  bijective {\it translation maps}  $T: {\rm Col}_{X^2}^{\rm SS} (D) \to {\rm Col}_{X}^{\rm R}(D); C \mapsto \overline{C}$ and $T^{-1}: {\rm Col}_{X}^{\rm R}(D) \to {\rm Col}_{X^2}^{\rm SS} (D); \overline{C}\mapsto C $, where ${\rm Col}_{X}^{\rm R}(D)$ means the set of region colorings of $D$ that satisfy the condition depicted in the right of Figure~\ref{doublepoint2} and Figure~\ref{doublepoint3}.

For a semi-sheet $X^2$-coloring $C$, we call $\overline{C}=T (C)$ the {\it corresponding region $X$-coloring } of $C$ through $T$.   
For a region $X$-coloring $C$, we call $\overline{C}=T^{-1} (C)$ the {\it corresponding semi-sheet $X^2$-coloring } of $C$ through $T^{-1}$.   

\begin{figure}
  \begin{center}
    \includegraphics[clip,width=9.0cm]{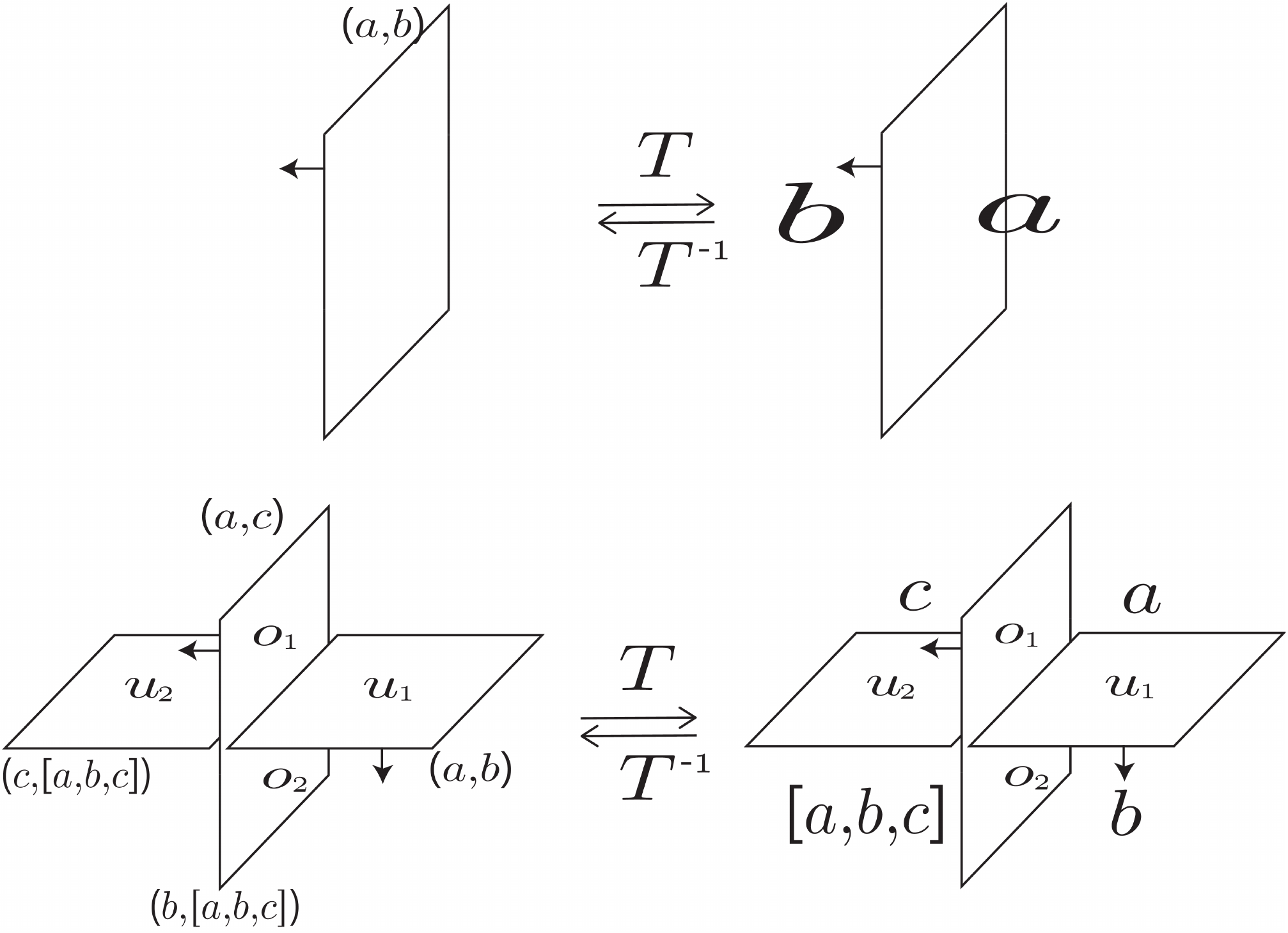}
    \label{doublepoint2}
  \end{center}
\end{figure}
\end{remark}

\begin{proposition}\label{prop:coloring2}
Let $D$ and $D'$ be connected diagrams of surface-links. 
If $D$ and $D'$ represent the same surface-link, then there exists a bijection between  
${\rm Col}_{X^2}^{\rm SS} (D) $ and ${\rm Col}_{X^2}^{\rm SS} (D')  $.
\end{proposition}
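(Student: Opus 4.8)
The plan is to follow exactly the same strategy used in the proof of Proposition~\ref{prop:coloring1} for classical links, replacing the Reidemeister calculus with Roseman calculus for broken surface diagrams. First I would reduce to the case where $D'$ is obtained from $D$ by a single Roseman move applied inside a $3$-ball $E$, using the fact (recalled in the Preliminaries) that any two connected diagrams of the same surface-link are joined by a finite sequence of connected diagrams and oriented Roseman moves. Given a semi-sheet $X^2$-coloring $C$ of $D$, I set $C'(e)=C(e)$ on every semi-sheet $e$ meeting the complement of $E$, and then check that this extends uniquely to a semi-sheet $X^2$-coloring $C'$ of $D'$; doing this for each Roseman move yields the desired bijection (its inverse being the same construction applied to the reverse move).

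The key verification is move-by-move. For moves that do not involve triple points (the moves creating/removing a branch point, a pair of branch points, or a closed double-point curve, and the moves sliding a sheet across a double-point curve or across a branch point), the required identities come directly from axioms ($\mathcal{L}$1), ($\mathcal{L}$2), and ($\mathcal{L}$1$'$): bijectivity of $\uline{\star}(a,b)$ and $\oline{\star}(a,b)$ and of the map $S$ in ($\mathcal{L}$2)-(iii) guarantees the unique extendability of the coloring across these local moves, exactly as for a biquandle. For the tetrahedral move, i.e.\ the Roseman move corresponding to moving a sheet past a triple point, the consistency of the two sides is precisely what axiom ($\mathcal{L}$3) was designed to encode: its three parts ($\mathcal{L}$3)-(i),(ii),(iii) match the three types of outermost sheet in the local picture, just as in the proof of Proposition~\ref{prop:localbqandhorizontaltqg} where these axioms were shown to be equivalent to ($\mathcal{H}$2). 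One could alternatively route the argument through region colorings: by Remark~\ref{Rem:coloringtranslation2} the translation map $T$ identifies ${\rm Col}_{X^2}^{\rm SS}(D)$ with ${\rm Col}_X^{\rm R}(D)$, and region $X$-colorings are invariant under Roseman moves because $(X,[\,])$ is a knot-theoretic horizontal-ternary-quasigroup; then a bijection between the region-coloring sets of $D$ and $D'$ transports to one between the semi-sheet-coloring sets. I would present the direct semi-sheet argument as the main proof and mention the region-coloring route as a remark.

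The main obstacle is purely bookkeeping: one must enumerate the Roseman moves (in their oriented, normal-oriented forms), assign colors consistently around each local picture respecting the normal-orientation conventions fixed in the Preliminaries, and confirm in each case that the outside colors determine the inside colors and that the two sides of the move agree. Branch points require a small extra check, since a semi-sheet terminating at a branch point forces its two ``ends'' to be compatible, which follows from ($\mathcal{L}$1$'$). None of this is conceptually hard once Proposition~\ref{prop:localbqandhorizontaltqg} is in hand — the surface case is the direct analogue of the link case, with triple points playing the role of the type III move — so in the write-up I would carry out the triple-point case explicitly (as was done for type III in Proposition~\ref{prop:coloring1}) and leave the remaining moves to the reader.

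\begin{proof}
Let $D$ and $D'$ be connected diagrams such that $D'$ is obtained from $D$ by a single Roseman move, and let $E$ be a $3$-ball in which the move is applied. Let $C$ be a semi-sheet $X^2$-coloring of $D$. We define a semi-sheet $X^2$-coloring $C'$ of $D'$, corresponding to $C$, by $C'(e)=C(e)$ for a semi-sheet $e$ meeting the outside of $E$. For each Roseman move, the colors of the semi-sheets lying inside $E$ are then uniquely determined: for the moves not involving a triple point this follows from ($\mathcal{L}$1), ($\mathcal{L}$2) and ($\mathcal{L}$1$'$) of Definition~\ref{def:localbiquandle} (together with the unique-extension property already used in Proposition~\ref{prop:coloring1}), and for the Roseman move involving a triple point this follows from ($\mathcal{L}$3), exactly as in the verification of the type III move in Proposition~\ref{prop:coloring1} and of axiom ($\mathcal{H}$2) in Proposition~\ref{prop:localbqandhorizontaltqg}. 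Applying the same construction to the reverse move gives a two-sided inverse, so $C\mapsto C'$ is a bijection ${\rm Col}_{X^2}^{\rm SS}(D)\to{\rm Col}_{X^2}^{\rm SS}(D')$. Since any two connected diagrams representing the same surface-link are related by a finite sequence of connected diagrams and oriented Roseman moves, composing these bijections yields a bijection between ${\rm Col}_{X^2}^{\rm SS}(D)$ and ${\rm Col}_{X^2}^{\rm SS}(D')$.
\end{proof}
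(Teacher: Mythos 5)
Your proposal is correct and follows exactly the approach the paper intends: the paper states Proposition~\ref{prop:coloring2} without proof, presenting it as the direct analogue of Proposition~\ref{prop:coloring1}, whose proof (extend the coloring from outside the ball where the move occurs and check unique extendability move by move) you reproduce faithfully in the Roseman setting. Your identification of ($\mathcal{L}$1$'$) for branch points and ($\mathcal{L}$3) for the tetrahedral move supplies precisely the case analysis the authors leave implicit.
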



Next, we show how to obtain a cocycle invariant by using semi-sheet $X^2$-colorings of a connected diagram.   

Let $C$ be a semi-sheet $X^2$-coloring of $D$.
We define the local chain 
$w(D, C; \tau) \in C^{\rm LB}_3 (X)$ at each triple point $\tau$ by 
$$w(D, C; \tau) ={\rm sign}(\tau) \big((a,b), (a,c), (a,d)\big)$$ when  $C(b_1)=(a,b), C(m_1)=(a,c)$ and $C(t_1)=(a,d)$, where $b_1$, $m_1$ and $t_1$ are the bottom-, middle- and top-semi-sheet  as depicted in Figure~\ref{triplepoint1}.
\begin{figure}
  \begin{center}
    \includegraphics[clip,width=10cm]{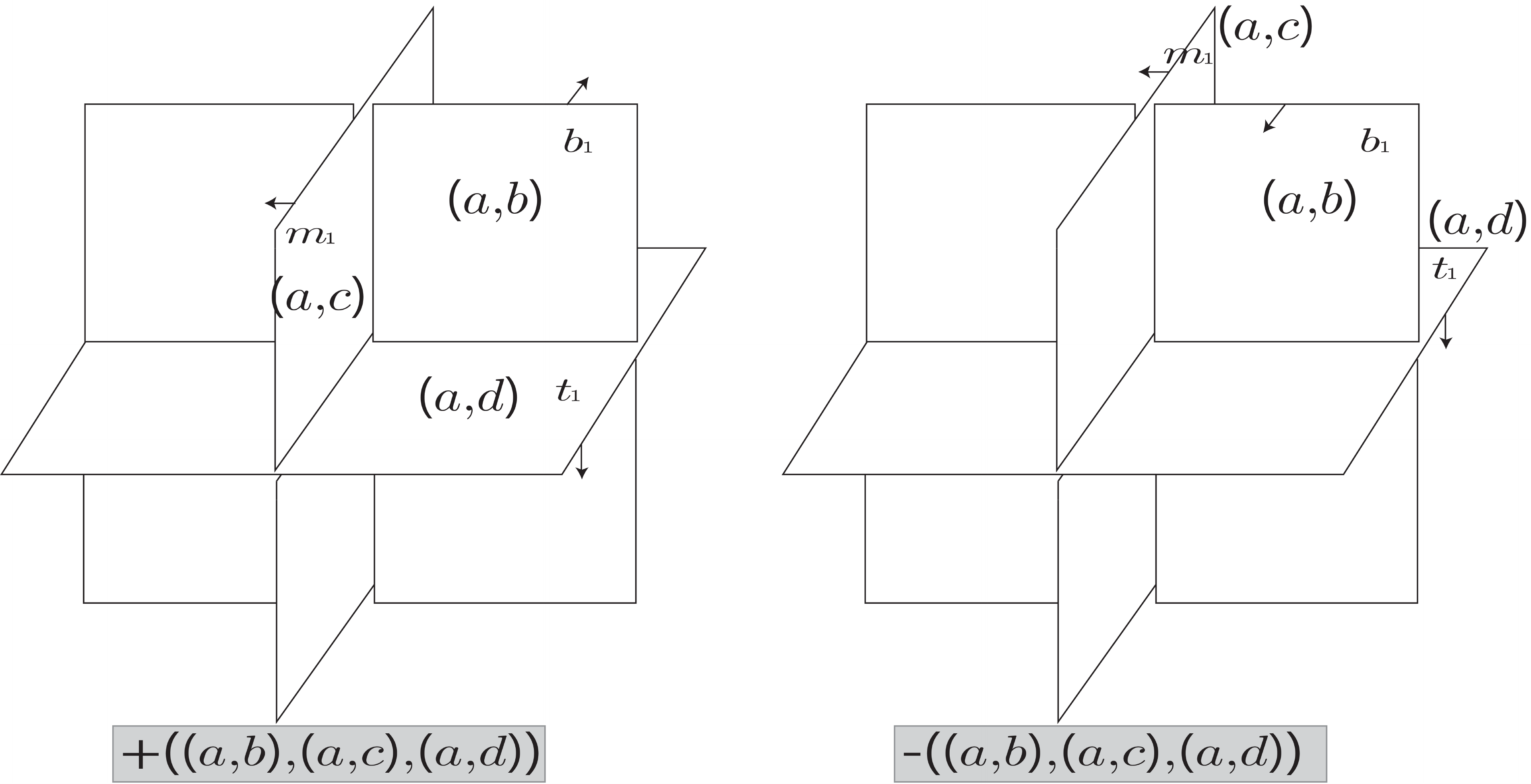}
    \label{triplepoint1}
  \end{center}
\end{figure}
Let  $A$ be an abelian group. We define a chain by 
$$\displaystyle W(D, C)=\sum_{\tau \in \{\mbox{\small triple points of $D$}\}} w(D, C; \tau) \in C^{\rm LB}_3 (X).$$ 

\begin{lemma}
The chain $W(D, C)$ is a  $3$-cycle of $C_* ^{\rm LB}(X)$. 
\end{lemma}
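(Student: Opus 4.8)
The plan is to mimic the argument just given for the link case, where $W(D,C)$ was shown to be a $2$-cycle by reading the boundary $\partial_2^{\rm LB}(w(D,C;\chi))$ at each crossing as a signed sum of colors of the semi-arcs incident to $\chi$, and then observing that each semi-arc is shared by exactly two crossings (or returns to the same crossing) with opposite signs, so everything cancels. Here the same strategy applies one dimension up: the boundary contributions of the local $3$-chains $w(D,C;\tau)$ at triple points will be accounted for by the double point curves of $D$, each of which is shared by exactly two triple points with opposite induced orientations.

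First I would fix a triple point $\tau$ of sign $+1$ with bottom-, middle-, and top-semi-sheet colors $C(b_1)=(a,b)$, $C(m_1)=(a,c)$, $C(t_1)=(a,d)$ as in Figure~\ref{triplepoint1}, and compute $\partial_3^{\rm LB}\big((a,b),(a,c),(a,d)\big)$ explicitly; this is precisely the expression that already appears in the proof that $[W(D,C)]$ is a Reidemeister-III/Roseman-move invariant in the link subsection, namely the six terms
\[
-\big((a,c),(a,d)\big)+\big((a,c)\oline{\star}(a,b),(a,d)\oline{\star}(a,b)\big)+\big((a,b),(a,d)\big)
\]
\[
-\big((a,b)\uline{\star}(a,c),(a,d)\oline{\star}(a,c)\big)-\big((a,b),(a,c)\big)+\big((a,b)\uline{\star}(a,d),(a,c)\uline{\star}(a,d)\big).
\]
Next I would identify each of these six $2$-chains with the ``color pair'' assigned to one of the six double-point-curve arcs emanating from $\tau$ (three incoming, three outgoing), using the coloring rule at a double point curve (Figure~\ref{doublepoint1}) to propagate colors through $\tau$; the sign $(-1)^i$ in the boundary formula matches the incoming/outgoing designation of the corresponding double-point arc, exactly as signs matched incoming/outgoing semi-arcs in the link case. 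For a negative triple point the same bookkeeping holds with all signs reversed. Summing over all triple points, each double point curve of $D$ connects two triple points (possibly the same one traversed twice) and contributes the same color pair with opposite signs, so $\partial_3^{\rm LB}(W(D,C))=0$.

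The main obstacle is purely combinatorial rather than algebraic: one must verify that the color pair read off from a given double-point arc near one of its endpoint triple points agrees — as an element of $C_2^{\rm LB}(X)$, i.e., modulo the degeneracy submodule $D_2(X)$ — with the color pair read off near the other endpoint, and that the relative signs are always opposite. This is where the Lemmas~\ref{lem:tribrackets1} and~\ref{lem:tribrackets2} (or equivalently axioms ($\mathcal{L}$1)--($\mathcal{L}$3)) are implicitly used: propagating a color across a triple point must be consistent with propagating it along the double point curve past the triple point, and this consistency is exactly the compatibility already encoded in the local biquandle axioms. Since the whole verification is the degree-$3$ analogue of a standard biquandle computation, I would state it as following ``by a direct calculation as in the case of biquandles, see~\cite{CarterElhamdadiSaito04, CenicerosElhamdadiGreenNelson14}'' and record the explicit $\partial_3^{\rm LB}$ computation above as the key step, leaving the routine arc-by-arc matching to the reader.
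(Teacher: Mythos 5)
Your overall strategy is the right one and is exactly the degree-$3$ analogue of the paper's proof of the corresponding $2$-cycle lemma for links (the paper itself omits the proof in the surface case, saying only that the properties are ``similar''): expand $\partial_3^{\rm LB}\big(w(D,C;\tau)\big)$ into six signed $2$-chains, attach them to the six double-point-curve ends at $\tau$, and cancel along double point curves. One remark: for the cancellation between two triple-point ends of the same double point arc you do not need Lemmas~\ref{lem:tribrackets1} or \ref{lem:tribrackets2} at all --- the four semi-sheets bordering a double point arc are literally the same four semi-sheets along its whole interior, so the $2$-chain read off at either end is identical on the nose; only the sign bookkeeping requires care.

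There is, however, a genuine gap: your claim that ``each double point curve of $D$ connects two triple points (possibly the same one traversed twice)'' is false. In a broken surface diagram the double point set, after removing neighborhoods of its singular points, decomposes into arcs and circles whose endpoints are triple points \emph{or branch points} (branch points are one of the four local pictures in Figure~\ref{multiplepoint}, and Roseman moves create and destroy them, so they cannot be assumed away). A double point arc running from a triple point to a branch point contributes exactly one unmatched term to $\sum_\tau \partial_3^{\rm LB}\big(w(D,C;\tau)\big)$, and your pairing argument does not kill it. The correct resolution --- and the reason the degenerate subcomplex $D_2(X)$ is quotiented out in the first place --- is that near a branch point the under- and over-semi-sheets merge into a single sheet, forcing $C(u_1)=C(o_1)$; hence the $2$-chain read off from such an arc has the form $\big((a,b),(a,b)\big)\in D_2(X)$ and vanishes in $C_2^{\rm LB}(X)=C_2(X)/D_2(X)$. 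You invoke degeneracy only for matching colors between two triple-point ends, where it is not needed; you need it precisely for the branch-point ends, which you have not accounted for. (Closed double point circles and arcs joining two branch points carry no triple points and contribute nothing, so they cause no trouble.) With this case added, the proof is complete.
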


For a $3$-cocycle $\theta \in C^3_{\rm LB}(X;A)$, we define
\[
\begin{array}{l}
\mathcal{H}(D)=\{[W(D, C)] \in H^{\rm LB}_3(X) \ | \ C \in {\rm Col}_{X^2}^{\rm SS} (D) \}, and \\[5pt]
\Phi_{\theta}(D)=\{\theta(W(D, C)) \in A \ | \ C \in {\rm Col}_{X^2}^{\rm SS} (D) \}
\end{array}
\]
as multisets. Then we have the following theorem:
\begin{theorem}
$\mathcal{H}(D)$ and $\Phi_{\theta}(D)$ are invariants of $F$.
\end{theorem}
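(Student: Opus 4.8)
The plan is to replay, for broken surface diagrams, the scheme used above for link diagrams. By the fact recalled in Section~\ref{Preliminaries}, any two connected diagrams of $F$ are joined by a finite sequence of connected diagrams and oriented Roseman moves, so it suffices to prove the following for connected diagrams $D,D'$ differing by a single such move: under the bijection $C\mapsto C'$ of Proposition~\ref{prop:coloring2} --- obtained by keeping each coloring on the semi-sheets outside the supporting ball $E$ of the move and filling in $E$ by means of the local biquandle axioms --- one has $[W(D,C)]=[W(D',C')]$ in $H^{\rm LB}_3(X)$. Granting this, $\mathcal H(D)=\mathcal H(D')$ as multisets; and since a $3$-cocycle $\theta$ annihilates ${\rm Im}\,\partial_4^{\rm LB}$ while $W(D,C)$ is a $3$-cycle by the lemma above, the value $\theta(W(D,C))$ depends only on the class $[W(D,C)]$, so $\Phi_\theta(D)=\Phi_\theta(D')$ follows as well.

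Since $W(D,C)$ and $W(D',C')$ differ only in triple points lying inside $E$, I would first clear away the Roseman moves that create or destroy no triple point (those adding or cancelling a double-point circle, and those sliding double-point curves or branch points): for these, $W(D,C)=W(D',C')$ holds verbatim in $C_3^{\rm LB}(X)$. For the moves that do alter triple points but are not the tetrahedral move, two elementary situations arise, forming the broken-surface analogue of Reidemeister moves I and II: if the triple points created or destroyed sit next to a branch point, the merging of two sheets there forces a repeated colour, so the corresponding generators already lie in $D_3(X)$ and vanish in $C_3^{\rm LB}(X)$; and if a sheet is pushed across a double-point curve away from any triple point, the two triple points that appear carry the same colour with opposite signs, so again $W(D,C)=W(D',C')$ after cancellation.

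The substantive case is the tetrahedral move, in which a sheet is pushed through a triple point, so that $D$ and $D'$ differ near a quadruple point and $E$ contains four triple points of $D$ and four of $D'$. Here I would argue exactly as in the Reidemeister~III computation displayed above: writing $a$ for the colour of the distinguished region at the quadruple point and $(a,b),(a,c),(a,d),(a,e)$ for the colours of the four semi-sheets through it, listed by height, each of the eight triple points inside $E$ involves three of these four semi-sheets and contributes, with a sign read off from Figure~\ref{triplepoint1} and the definition of $w(D,C;\tau)$, precisely one of the eight generators of $\partial_4^{\rm LB}\big((a,b),(a,c),(a,d),(a,e)\big)$ --- the four ``deleted'' terms and the four ``starred'' terms. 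Collecting them gives
\[
W(D',C')-W(D,C)=\pm\,\partial_4^{\rm LB}\big((a,b),(a,c),(a,d),(a,e)\big)\in{\rm Im}\,\partial_4^{\rm LB},
\]
with the overall sign equal to that of the quadruple point, whence $[W(D,C)]=[W(D',C')]$. This is the step in which axiom ($\mathcal{L}$3) of Definition~\ref{def:localbiquandle} is really used: via Proposition~\ref{prop:coloring2} it is what guarantees that the matching coloring $C'$ exists, and thereafter the displayed identity is a bookkeeping match of the definition of $\partial_4^{\rm LB}$ against the local picture around the quadruple point, exactly parallel to the way the three equalities of ($\mathcal{H}$2) were extracted from ($\mathcal{L}$3) in Proposition~\ref{prop:localbqandhorizontaltqg}. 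The main obstacle is therefore not conceptual but combinatorial: cataloguing the oriented Roseman moves, tracking which semi-sheet bears which colour around a quadruple point, and pinning down all eight signs; this is the broken-surface analogue of the computations in \cite{CarterJelsovskyKamadaLangfordSaito03} for quandle and biquandle surface cocycle invariants, and --- as with the Reidemeister~III case above --- I would leave the remaining details to the reader.
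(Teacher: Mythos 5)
Your proposal is correct and follows exactly the route the paper intends: the paper omits the proof of this theorem entirely (the subsection only states the surface-link analogues "briefly"), deferring to the analogy with the link case, where invariance under Reidemeister~III was shown by exhibiting $W(D',C')-W(D,C)$ as a boundary. Your Roseman-move analysis --- trivial moves verbatim, branch-point moves absorbed by the degenerate subcomplex $D_3(X)$, cancelling pairs of triple points, and the tetrahedral move matching the eight terms of $\partial_4^{\rm LB}\big((a,b),(a,c),(a,d),(a,e)\big)$ --- is the standard argument from \cite{CarterJelsovskyKamadaLangfordSaito03} adapted to local biquandles, and fills in precisely what the authors left unwritten.
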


\begin{proposition}
For cohomologous $3$-cocycles $\theta, \theta' \in C^3_{\rm LB}(X; A)$, we have $\Phi_\theta(D)=\Phi_{\theta^{'}}(D)$.
\end{proposition}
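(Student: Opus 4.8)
The plan is to mimic, one homological degree higher, the argument already used for the $2$-cocycle case in Subsection~\ref{subsection:linkinvariant}. The two inputs I rely on are the lemma just stated—that $W(D,C)$ is a $3$-cycle of $C_\ast^{\rm LB}(X)$ for every $C \in {\rm Col}_{X^2}^{\rm SS}(D)$—and the definition of what it means for two $3$-cocycles to be cohomologous.

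First I would unwind the hypothesis: since $\theta$ and $\theta'$ are cohomologous $3$-cocycles, their difference lies in ${\rm Im}\,\delta^2_{\rm LB}$, so there is a homomorphism $\phi : C_2^{\rm LB}(X) \to A$ with $\theta - \theta' = \delta^2_{\rm LB}(\phi) = \phi \circ \partial_3^{\rm LB}$. Then, for each semi-sheet $X^2$-coloring $C$ of $D$, I would compute
\[
\theta(W(D,C)) - \theta'(W(D,C)) = (\theta-\theta')(W(D,C)) = \phi\big(\partial_3^{\rm LB}(W(D,C))\big) = \phi(0) = 0,
\]
where the third equality uses that $W(D,C)$ is a $3$-cycle. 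Hence $\theta(W(D,C)) = \theta'(W(D,C))$ for every $C$.

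Finally, since $\Phi_\theta(D)$ and $\Phi_{\theta'}(D)$ are the multisets of these values indexed by the \emph{same} set ${\rm Col}_{X^2}^{\rm SS}(D)$, and the value attached to each coloring is unchanged, the two multisets coincide, giving $\Phi_\theta(D)=\Phi_{\theta'}(D)$. I do not anticipate any genuine obstacle: the only point requiring a word of care is that this is an equality of multisets rather than of sets, which holds because multiplicities are computed over the identical index set ${\rm Col}_{X^2}^{\rm SS}(D)$ and the assigned values agree termwise. The passage from the link case to the surface-link case changes nothing beyond replacing degree $2$ by degree $3$, crossings by triple points, and $\delta^1_{\rm LB}$ by $\delta^2_{\rm LB}$.
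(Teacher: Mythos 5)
Your argument is correct and is exactly the degree-shifted version of the proof the paper gives for the $2$-cocycle case in Subsection~\ref{subsection:linkinvariant} (the paper itself omits the proof in the surface-link case precisely because it is this same argument with $\delta^1_{\rm LB}$, $\partial_2^{\rm LB}$ replaced by $\delta^2_{\rm LB}$, $\partial_3^{\rm LB}$). No issues.
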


\section{Niebrzydowski's work}
\label{Niebrzydowski's work}
In \cite{Niebrzydowski1} (see also \cite{Niebrzydowski2}), Niebrzydowski studied  knot-theoretic vertical-tribracket $\langle\, \rangle$ (with some conditions)  to give a region coloring  of a link diagram, and he gave a homology theory  for  knot-theoretic ternary quasigroups. 
In this section, we review his idea with our notation.

\subsection{Homology groups}\label{subsec:NHom}
Let $(X, \langle\, \rangle )$ is a knot-theoretic vertical-ternary-quasigroup.
Let $n \in \mathbb Z$.
Let $C_n^{\rm Nie}(X)$ be the free $\mathbb Z$-module generated by the elements of $X^{n+2}$
if $n\geq 0$, and $C_n^{\rm Nie}(X)=0$ otherwise.
We define a homomorphism $\partial_n^{\rm Nie} : C_n^{\rm Nie} (X) \to C_{n-1}^{\rm Nie} (X)$ by 
\begin{align}
&\partial_n^{\rm Nie} \big( ( a_0, a_1, \ldots , a_{n+1} ) \big)  \notag \\
&= \sum_{i=0}^{n} (-1)^i \Big\{ \big( y_{(i,1)}, y_{(i,2)}, \ldots , y_{(i,i)}, a_{i+1}, a_{i+2}, \ldots , a_{n+1} \big)\notag  \\
&\hspace{2cm}- \big( a_0, a_1, \ldots , a_i, y_{(i,i+1)},y_{(i,i+2)},\ldots , y_{(i,n)} \big) \Big\} \notag 
\end{align}
if $n> 0$, and $\partial_n^{\rm Nie}=0$ otherwise, where 
\[
y_{(i,j)} = \left\{
\begin{array}{ll}
\langle a_{j-1},  a_j,  y_{(i,j+1)}\rangle  & (j<i),\\[5pt]
\langle a_{j-1},  a_j,  a_{j+1} \rangle & (j=i, i+1),\\[5pt]
\langle y_{(i,j-1)},  a_j,  a_{j+1} \rangle & (j>i+1).
\end{array}
\right.
\]
\begin{example} For $a,b,c,d \in X$, 
\[
\begin{array}{rcl}
\partial_1^{\rm Nie}\big((a,b,c)\big) &=& (-1)^0\{ (b,c)-(a, \langle a,b,c\rangle)\}+(-1)^1\{ (\langle a,b,c\rangle,c)-(a, b)\},\\[3pt]
&=&(b,c)-(a, \langle a,b,c\rangle)-(\langle a,b,c\rangle,c)+(a, b), \\[3pt]
\partial_2^{\rm Nie}\big((a,b,c,d)\big) &=& (-1)^0\{ (b,c,d)-(a, \langle a,b,c\rangle, \langle \langle a,b,c\rangle ,c,d \rangle)\}\\[3pt]
&&+(-1)^1\{ (\langle a,b,c\rangle,c,d)-(a, b, \langle b,c,d\rangle )\}\\[3pt]
&&+(-1)^2\{(\langle a,b , \langle b,c,d \rangle \rangle, \langle b,c,d \rangle, d)-(a,b,c)\} \\[3pt]
&=& (b,c,d)-(a, \langle a,b,c\rangle, \langle \langle a,b,c\rangle ,c,d \rangle)- (\langle a,b,c\rangle,c,d)\\[3pt]
&&+(a, b, \langle b,c,d\rangle )+(\langle a,b , \langle b,c,d \rangle \rangle, \langle b,c,d \rangle, d)-(a,b,c).\\ 
\end{array}
\]
\end{example}
\begin{remark}
We modified the definitions of  the chain group $C_{-1}^{\rm Nie}(X)$ and the boundary map $\partial_0^{\rm Nie}$, while Niebrzydowski defined $C_0^{\rm Nie}(X)$ to be the free $\mathbb Z$-module generated by the elements of $X$, and 
$\partial_0^{\rm Nie}$ by $\partial_0^{\rm Nie}\big((a_0,a_1)\big) = (a_1)-(a_0)$. This does not affect the $2$- or $3$-cocycle conditions and his cocycle invariants defined in Subsections~\ref{subsec: Region colorings of link diagrams, cocycle invariants} and  \ref{subsec: Region colorings of surface-link diagrams, cocycle invariants}.
\end{remark}
\begin{lemma}{\rm (\cite{Niebrzydowski1})}
$C_*^{\rm Nie}(X)=\{C_n^{\rm Nie}(X), \partial_n^{\rm Nie}\}_{n\in \mathbb Z}$ is a chain complex, i.e., for any $n \in \mathbb Z$, $\partial_n^{\rm Nie}\circ \partial_{n+1}^{\rm Nie} =0$ holds.
\end{lemma}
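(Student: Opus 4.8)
The statement to prove is that $C_*^{\rm Nie}(X) = \{C_n^{\rm Nie}(X), \partial_n^{\rm Nie}\}_{n \in \mathbb{Z}}$ is a chain complex, i.e. $\partial_n^{\rm Nie} \circ \partial_{n+1}^{\rm Nie} = 0$ for all $n$. Since this is attributed to Niebrzydowski, the expected proof is a direct computation, but there is a cleaner route available in the present context: exploit the isomorphism with the local biquandle chain complex (or, at this stage of the paper, mimic its proof). The plan is to verify the identity by a bookkeeping argument on the terms of the double sum, grouping the composite $\partial_n^{\rm Nie}\partial_{n+1}^{\rm Nie}$ by pairs of deleted indices and showing that terms cancel in pairs, exactly as in the standard proof that the simplicial (or biquandle) boundary squares to zero.

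First I would write out $\partial_n^{\rm Nie}\big(\partial_{n+1}^{\rm Nie}(a_0,\dots,a_{n+2})\big)$ as a double sum indexed by pairs $(i,j)$, where $i$ is the index used in the outer $\partial_{n+1}^{\rm Nie}$ and $j$ the index used in the inner $\partial_n^{\rm Nie}$. Each of the two terms produced by the outer differential (the ``left'' term involving the $y_{(i,\cdot)}$ replacing the initial block, and the ``right'' term involving the $y_{(i,\cdot)}$ replacing the terminal block) splits further into two under the inner differential, so one gets four families of terms carrying signs $(-1)^{i+j}$. Then I would separate into the cases $j < i$, $j = i$, and $j > i$, and check that the $j<i$ contribution of one family matches the $j>i$ contribution of another after reindexing $(i,j) \leftrightarrow (j, i\pm 1)$, so that the $(-1)^{i+j}$ signs become opposite and everything cancels. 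The crucial input is that the functions $y_{(i,j)}$ are defined by the same ``propagate $\langle\,\rangle$ outward from the deleted block'' recipe as the face maps of the local biquandle complex, so that applying the recipe twice (delete block at $i$, then delete block at $j$) is independent of the order up to the relabeling — this is where the axioms ($\mathcal{V}$1) and ($\mathcal{V}$2), equivalently ($\mathcal{L}$2) and ($\mathcal{L}$3), enter, via Lemma~\ref{lem:tribrackets1} and Lemma~\ref{lem:tribrackets2} and their vertical analogues.

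The main obstacle is purely notational: the quantities $y_{(i,j)}$ are defined by a three-way recursion (separately for $j<i$, $j\in\{i,i+1\}$, $j>i+1$), so after applying the differential twice one must carefully recompute the nested $y$'s for the once-deleted tuples and match them against those for the twice-deleted tuples. Verifying these matchings is precisely a repeated application of the kind of cube-of-identities worked out in Lemma~\ref{lem:tribrackets1} and Lemma~\ref{lem:tribrackets2}; I would isolate the one genuinely new identity needed (the compatibility of two successive block-deletions near the boundary between the ranges) as a sublemma, prove it from ($\mathcal{V}$2), and then feed it into the otherwise mechanical sign-cancellation.

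Given that the statement is quoted as \cite{Niebrzydowski1}, the honest write-up is brief: one remarks that $\partial_n^{\rm Nie} \circ \partial_{n+1}^{\rm Nie} = 0$ follows by the same direct calculation as for biquandle chain complexes (cf. \cite{CarterElhamdadiSaito04, CenicerosElhamdadiGreenNelson14}), the only difference being the replacement of the binary biquandle operations by the ternary operation $\langle\,\rangle$ and the attendant use of Lemmas~\ref{lem:tribrackets1} and \ref{lem:tribrackets2}; the two-dimensional case was already exhibited explicitly in the Example above, and the general pattern is identical. Alternatively, once the isomorphism $C_*^{\rm Nie}(X) \cong C_*^{\rm LB}(X)$ established later in Section~\ref{Correspondence between our work and Niebrzydowski's work} is in hand, this lemma is an immediate consequence of Lemma~\ref{lem:chain complex}; but since that isomorphism has not yet been proved at this point in the paper, the self-contained direct verification is the route I would present here.
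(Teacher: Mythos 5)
The first thing to say is that the paper does not prove this lemma at all: it is imported from \cite{Niebrzydowski1} with no argument, and even the parallel statement for $C_*(X)$ is dispatched with ``direct calculation as in the case of biquandle chain complexes.'' So there is nothing in the paper to measure your write-up against, and your outline is the natural self-contained route. Its overall shape is right: writing $\partial_n^{\rm Nie}=\sum_{i}(-1)^i(d_i^{\ell}-d_i^{r})$, where $d_i^{\ell}$ replaces the prefix $a_0,\dots,a_i$ by $y_{(i,1)},\dots,y_{(i,i)}$ and $d_i^{r}$ replaces the suffix $a_{i+1},\dots,a_{n+1}$ by $y_{(i,i+1)},\dots,y_{(i,n)}$, the identity $\partial^2=0$ reduces to presimplicial-type commutation relations between the face maps $d_i^{\ell},d_i^{r}$ in all four combinations of $\ell$ and $r$, after which the $(-1)^{i+j}$ bookkeeping you describe is routine.

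The gap is that those commutation relations are the entire mathematical content of the lemma, and you only name them (``compatibility of two successive block-deletions'') without proving a single one. They are not formal: comparing, say, $d_i^{\ell}d_j^{\ell}$ with the corresponding reindexed composite requires showing that the $y$'s of the once-modified tuple, recomputed by the three-way recursion, agree with the $y$'s obtained in the other order, and this is exactly where ($\mathcal{V}$2) and Lemma~\ref{lem:tribrackets1} must be invoked; the mixed cases $\ell r$ and $r\ell$ need separate treatment. This is the same genre of computation the paper carries out at length in Lemmas~\ref{lem:tribracketfomula}--\ref{lem:y}, so it is doable but far from negligible, and isolating ``one genuinely new identity'' understates how many cases there are. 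Also, the $n=2$ Example in the paper only displays $\partial_1^{\rm Nie}$ and $\partial_2^{\rm Nie}$ on generators; it does not verify $\partial_1^{\rm Nie}\circ\partial_2^{\rm Nie}=0$, so it cannot be cited as the base case. On the other hand, your fallback route is better than you suggest: deducing the lemma from $\partial^{\rm LB}\circ\partial^{\rm LB}=0$ is not circular, because the proof of Lemma~\ref{lem:chainmap} uses only the formula for $\partial^{\rm Nie}$, never the chain-complex property. Working with the unquotiented versions of $\varphi$ and $\psi$ (as in Lemma~\ref{lem:welldefined}), one gets $\partial_n^{\rm Nie}\circ\partial_{n+1}^{\rm Nie}=\varphi_n\,\partial_{n+1}^{\rm LB}\,\psi_{n+1}\,\varphi_{n+1}\,\partial_{n+2}^{\rm LB}\,\psi_{n+2}=\varphi_n\,\partial_{n+1}^{\rm LB}\,\partial_{n+2}^{\rm LB}\,\psi_{n+2}=0$ by Lemma~\ref{lem:inverses}; within this paper that is the cleanest complete proof available, modulo reordering the exposition.
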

Let $D_n^{\rm Nie}(X)$ be the submodule of $C_n^{\rm Nie}(X)$ that is generated by the elements of 
\[
\big\{ (a_0, a_1,\ldots , a_{n+1}) \in X^{n+2} ~|~ \mbox{ $\langle a_{j-1},  a_j,  a_{j+1} \rangle = a_j$ for some $j\in \{1, \ldots , n \} $  }  \big\}.
\]
\begin{lemma}{\rm (\cite{Niebrzydowski1})} \label{lemma:degenerate(N)}
$D_*^{\rm Nie}(X)=\{D_n^{\rm Nie}(X), \partial_n^{\rm Nie}\}_{n\in \mathbb Z}$ is a subchain complex of  $C_*^{\rm Nie}(X)$, i.e., for any $n \in \mathbb Z$, $\partial_n (D_n^{\rm Nie}(X)) \subset D_{n-1}^{\rm Nie} (X)$ holds.
\end{lemma}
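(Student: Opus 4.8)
The goal is to prove Lemma~\ref{lemma:degenerate(N)}, that $D_*^{\rm Nie}(X)$ is a subchain complex of $C_*^{\rm Nie}(X)$; since $D_*^{\rm Nie}(X)$ is by definition a submodule closed under nothing a priori, the only content is that $\partial_n^{\rm Nie}(D_n^{\rm Nie}(X)) \subset D_{n-1}^{\rm Nie}(X)$. The plan is to fix a generator $\xi = (a_0, a_1, \ldots, a_{n+1})$ of $D_n^{\rm Nie}(X)$, so that $\langle a_{j-1}, a_j, a_{j+1}\rangle = a_j$ for some fixed $j \in \{1, \ldots, n\}$, and track what happens to each of the $2(n+1)$ terms in $\partial_n^{\rm Nie}(\xi)$. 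Each such term is indexed by $i \in \{0, 1, \ldots, n\}$ together with a choice of the ``front half'' or the ``back half''; I would show that for each $i$ the term lies in $D_{n-1}^{\rm Nie}(X)$ except possibly for $i = j-1$ and $i = j$, and that those four exceptional terms cancel in pairs.

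First I would record the elementary but crucial fact that the degeneracy relation is preserved by the auxiliary elements $y_{(i,k)}$: if $\langle a_{j-1}, a_j, a_{j+1}\rangle = a_j$, then because $y_{(i,k)}$ is defined recursively by a single application of $\langle\,\rangle$ to a triple of consecutive entries (shifting either leftward for $k < i$ or rightward for $k > i+1$), a degenerate consecutive pair among the $a$'s forces a degenerate consecutive pair among the resulting entries. Concretely, in the front-half term $(y_{(i,1)}, \ldots, y_{(i,i)}, a_{i+1}, \ldots, a_{n+1})$: if $i \le j-1$ the pair $(a_j, a_{j+1})$ (or $(a_{j-1},a_j,a_{j+1})$ if $i \le j-2$, and the pair $a_j,a_{j+1}$ sits untouched when $i=j-1$) survives verbatim and still satisfies $\langle a_{j-1},a_j,a_{j+1}\rangle = a_j$, so the term is degenerate; if $i \ge j+1$, then all of $a_{j-1}, a_j, a_{j+1}$ get replaced by $y$'s, but using $y_{(i,k)} = \langle a_{k-1}, a_k, y_{(i,k+1)}\rangle$ for $k < i$ one checks directly that $y_{(i,j)} = \langle a_{j-1}, a_j, y_{(i,j+1)}\rangle$ and $y_{(i,j-1)} = \langle a_{j-2}, a_{j-1}, y_{(i,j)}\rangle$, and the axiom ($\mathcal{V}2$) (together with ($\mathcal{V}1$)) combined with $\langle a_{j-1},a_j,a_{j+1}\rangle = a_j$ yields $\langle y_{(i,j-1)}, y_{(i,j)}, y_{(i,j+1)}\rangle = y_{(i,j)}$, i.e. the $j$th consecutive triple in the image is again degenerate. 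The same bookkeeping applies to the back-half term $(a_0, \ldots, a_i, y_{(i,i+1)}, \ldots, y_{(i,n)})$, with the roles of ``left of $i$'' and ``right of $i$'' interchanged.

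It remains to handle $i \in \{j-1, j\}$. For $i = j-1$ the front-half term is $(y_{(j-1,1)}, \ldots, y_{(j-1,j-1)}, a_j, a_{j+1}, \ldots, a_{n+1})$ and for $i = j$ the front-half term is $(y_{(j,1)}, \ldots, y_{(j,j-1)}, y_{(j,j)}, a_{j+1}, \ldots, a_{n+1})$ where $y_{(j,j)} = \langle a_{j-1}, a_j, a_{j+1}\rangle = a_j$; moreover for $k < j-1$ the recursions for $y_{(j-1,k)}$ and $y_{(j,k)}$ agree because the only place they could differ is at index $j-1$, and $y_{(j,j-1)} = \langle a_{j-2}, a_{j-1}, y_{(j,j)}\rangle = \langle a_{j-2}, a_{j-1}, a_j\rangle = y_{(j-1,j-1)}$. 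Hence the two front-half terms are literally equal, and since they come with signs $(-1)^{j-1}$ and $(-1)^j$ they cancel in $\partial_n^{\rm Nie}(\xi)$. An identical argument shows the two back-half terms for $i=j-1$ and $i=j$ coincide and cancel. Summing over all $i$, every surviving term of $\partial_n^{\rm Nie}(\xi)$ lies in $D_{n-1}^{\rm Nie}(X)$, proving the claim.

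I expect the main obstacle to be the $i \ge j+1$ case of the first step, i.e. verifying that $\langle y_{(i,j-1)}, y_{(i,j)}, y_{(i,j+1)}\rangle = y_{(i,j)}$: this is exactly the point where one must invoke the $\langle\,\rangle$-axioms ($\mathcal{V}2$)(i),(ii) rather than just manipulate formal symbols, and one has to be careful about which of the two parts of ($\mathcal{V}2$) applies depending on the parity/position relative to $i$. (This is the tribracket analogue of the standard check that degenerate chains form a subcomplex in quandle/biquandle homology, and indeed the cited references \cite{CarterElhamdadiSaito04, CenicerosElhamdadiGreenNelson14} carry out the corresponding computation in the biquandle case; under the isomorphism developed later in the paper this lemma will correspond to the local biquandle statement, so in fact one could alternatively defer the proof to that correspondence, but a direct verification as above is cleaner.) The remaining cases and the cancellation in pairs are routine index bookkeeping, so I would state them briefly and leave the full expansion to the reader, as the paper does for the analogous Lemma for $D_*(X)$.
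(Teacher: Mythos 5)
The paper itself offers no proof of this lemma --- it is attributed to \cite{Niebrzydowski1} and stated without argument --- so your proposal can only be judged on its own merits. Your overall architecture is the right one and matches what a direct verification must look like: for a generator with $\langle a_{j-1},a_j,a_{j+1}\rangle=a_j$, the front- and back-half terms with $i\le j-2$ (resp.\ $i\ge j+1$) keep the triple $a_{j-1},a_j,a_{j+1}$ intact and are degenerate for free, the four terms with $i\in\{j-1,j\}$ coincide in pairs and cancel (your verification that $y_{(j,j)}=a_j$ forces $y_{(j,k)}=y_{(j-1,k)}$ for $k<j$, and dually for the back halves, is correct), and the remaining terms require the axiom $(\mathcal{V}2)$. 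One cosmetic point: your second paragraph asserts that the $i=j-1$ front-half term is already degenerate because ``the pair $a_j,a_{j+1}$ sits untouched''; that does not give a degenerate consecutive \emph{triple} (the entry to its left is now $y_{(j-1,j-1)}=\langle a_{j-2},a_{j-1},a_j\rangle$), but your third paragraph correctly handles $i=j-1$ by cancellation instead, so this is only an internal inconsistency, not a gap.

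The genuine error is in the step you yourself flag as the main obstacle. The identity you propose to verify, $\langle y_{(i,j-1)},y_{(i,j)},y_{(i,j+1)}\rangle=y_{(i,j)}$, is false in general: writing $u=y_{(i,j+1)}$ in the front-half case $i\ge j+1$, axiom $(\mathcal{V}2)$(ii) gives
\[
\big\langle \langle a_{j-2},a_{j-1},\langle a_{j-1},a_j,u\rangle\rangle,\ \langle a_{j-1},a_j,u\rangle,\ u\big\rangle=\big\langle\langle a_{j-2},a_{j-1},a_j\rangle, a_j, u\big\rangle,
\]
which equals $y_{(i,j)}=\langle a_{j-1},a_j,u\rangle$ only if $\langle a_{j-2},a_{j-1},a_j\rangle=a_{j-1}$, i.e.\ a degeneracy at $j-1$ rather than at $j$. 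The degenerate triple in the image actually \emph{shifts by one}: for the front half one must check $\langle y_{(i,j)},y_{(i,j+1)},v\rangle=y_{(i,j+1)}$ where $v$ is the entry at original index $j+2$ (namely $y_{(i,j+2)}$ or $a_{i+1}$), which follows from $(\mathcal{V}2)$(ii) applied to $(a_{j-1},a_j,a_{j+1},u)$ together with $\langle a_{j-1},a_j,a_{j+1}\rangle=a_j$; for the back half with $i\le j-2$ the shift is downward, and one checks $\langle y_{(i,j-2)},y_{(i,j-1)},y_{(i,j)}\rangle=y_{(i,j-1)}$ via $(\mathcal{V}2)$(i) applied to $(y_{(i,j-2)},a_{j-1},a_j,a_{j+1})$. (A sanity check in low degree: for $\langle a,b,c\rangle=b$ and $n=2$, the surviving non-cancelling front-half term is $(\langle a,b,\langle b,c,d\rangle\rangle,\langle b,c,d\rangle,d)$, whose degenerate triple is centered at the \emph{second} entry, one slot to the right of where your formula places it.) So the plan is sound and the tools you name are the right ones, but the specific identity you would set out to prove would fail, and the repair --- relocating the degenerate triple by one position in each direction --- is the actual content of the hard step.
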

\noindent Therefore the  chain complex 
$$C_*^{\rm N} (X)=\{C_n^{\rm N}(X):=C_n^{\rm Nie}(X)/D_n^{\rm Nie}(X), \partial_n^{\rm N} := \partial_n^{\rm Nie} \}_{n\in \mathbb Z}$$
is induced.
The homology group $H_n^{\rm N}  (X)$ of $C_*^{\rm N}  (X)$  is called the \textit{$n$th knot-theoretic ternary quasigroup homology} of $(X, \langle\, \rangle )$.

For an abelian group $A$, we define the chain and cochain complexes by 
\[
\begin{array}{l}
C_n^{\rm N} (X; A)=C_n^{\rm N}(X) \otimes A, \quad \partial_n^{\rm N} \otimes {\rm id}  \mbox{ and }\\[5pt]
C_{\rm N}^n(X; A) ={\rm Hom}(C_n^{\rm N}(X); A), \quad \delta^n_{\rm N} \mbox{ s.t. }\delta^n_{\rm N}(f)=f \circ \partial_{n+1}^{\rm N}.
\end{array}
\]
Let $C_\ast^{\rm N}(X; A)=\{C_n^{\rm N}(X; A), \partial_n^{\rm N}\otimes {\rm id}\}_{n\in \mathbb Z}$ and $C_{\rm N}^\ast(X; A)=\{C_{\rm N}^n(X; A), \delta^n_{\rm N}\}_{n\in \mathbb Z}$. 
The $n$th homology group $H_n^{\rm N}(X; A)$ and $n$th cohomology group $H^n_{\rm N}(X; A)$ of  $(X, \langle\, \rangle )$  with coefficient group $A$ are defined by
\[
H_n^{\rm N}(X; A)=H_n(C_\ast^{\rm N}(X; A)) \qquad {\rm and} \qquad H_{\rm N}^n(X; A)=H^n(C^\ast_{\rm N}(X; A)).
\]
Note that we omit the coefficient group  $A$ if $A=\mathbb Z$ as usual.

\subsection{Region colorings of link diagrams, cocycle invariants}\label{subsec: Region colorings of link diagrams, cocycle invariants}

Let $(X, \langle\, \rangle )$ be a knot-theoretic vertical-ternary-quasigroup.
Let $D$ be a diagram of a link.
\begin{definition}\label{def:regioncoloring1}
A \textit{region $X$-coloring} of $D$ is a map $C: \mathcal{R}(D) \to X$ satisfying the following condition:
\begin{itemize}
\item For a crossing with regions $r_1$, $r_2$, $r_3$ and $r_4$ near it as depicted in Figure~\ref{coloring3}, let $C(r_1)=a$, $C(r_2)=b$ and $C(r_3)= c$. Then $C(r_4) = \langle a,b,c \rangle $ holds, see Figure~\ref{coloring3}.
\end{itemize}
We denote by ${\rm Col}_{X}^{\rm R} (D) $ the set of region $X$-colorings of $D$. 
We call $C(r)$ for a  region $r$ the {\it color} of $r$. 
We call a pair $(D,C)$ of a diagram $D$ and a region $X$-coloring $C$ of $D$ a {\it region $X$-colored diagram}.
\end{definition}
\begin{proposition}{\rm (\cite{Niebrzydowski1})}
Let $D$ and $D'$ be diagrams of links. 
If $D$ and $D'$ represent the same links, then there exists a bijection between  
${\rm Col}_{X}^{\rm R} (D) $ and ${\rm Col}_{X}^{\rm R} (D')  $.
\end{proposition}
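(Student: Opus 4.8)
The plan is to argue exactly as for Proposition~\ref{prop:coloring1}, following Niebrzydowski. Since a region $X$-coloring is defined for an arbitrary link diagram, with no connectedness hypothesis, it suffices to treat the case in which $D'$ is obtained from $D$ by a single oriented Reidemeister move; the general statement then follows by composing the resulting bijections along any finite sequence of oriented Reidemeister moves relating $D$ and $D'$. So fix a $2$-disk $E$ in which the move is performed. The regions of $D$ and of $D'$ that meet $\mathbb{R}^2\setminus E$ are in an obvious bijective correspondence, so given $C\in{\rm Col}_X^{\rm R}(D)$ we may define $C'$ on all regions of $D'$ touching the exterior of $E$ by copying $C$ there; the substance of the proof is that $C'$ then extends uniquely to a region $X$-coloring of $D'$, and symmetrically.

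First I would dispose of the type I move. Here $E$ contains one crossing and exactly one region not present before, a monogon. Reading off the coloring condition of Figure~\ref{coloring3} at that crossing, three of the four incident sectors carry the same (inherited) color and one carries the unknown monogon color; depending on the sign of the kink and the orientation, the monogon color is then the unique solution of one of the three equations guaranteed by ($\mathcal{V}$1)(i)--(iii) of Definition~\ref{s-ternary operation}, and since this is the only condition inside $E$ there is nothing further to check. Next, for the type II move, $E$ gains exactly one new region, the interior of the bigon, and two new crossings; after substituting the inherited colors, the two crossing conditions become a pair of equations in the single unknown bigon color, one of which has a unique solution by ($\mathcal{V}$1) and forces the other to hold as well, the two crossings having opposite sign so that the second condition is the ``inverse'' of the first. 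In both cases the extension $C\mapsto C'$ and its reverse are mutually inverse, giving the desired bijection.

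The main work, and the step I expect to be the real obstacle, is the type III move. No new region appears, and all six regions surrounding the configuration already have colors inherited from outside $E$; what must be shown is that the conjunction of the three crossing conditions for $D$ is equivalent to the conjunction of the three crossing conditions for $D'$. The way to do this is to label the six regions, write out $r_4=\langle r_1,r_2,r_3\rangle$ at each of the three crossings before and after the move — being careful, across the orientation conventions, about which region plays which of the roles $r_1,\dots,r_4$ — and observe that after cancelling the conditions common to both sides the remaining identities are precisely ($\mathcal{V}$2)(i) and (ii) of Definition~\ref{s-ternary operation}, which is the content of the figure accompanying that definition. Once this equivalence is established, the unique extension across $E$ again yields a bijection ${\rm Col}_X^{\rm R}(D)\to{\rm Col}_X^{\rm R}(D')$, and composing over a sequence of moves proves the proposition.
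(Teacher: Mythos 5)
The paper does not prove this proposition itself---it is quoted from \cite{Niebrzydowski1}---so the natural benchmark is the paper's proof of the analogous Proposition~\ref{prop:coloring1}, which is exactly the template you follow: reduce to a single oriented Reidemeister move in a disk $E$, copy the coloring on everything meeting the exterior of $E$, and check unique extension inside. Your observation that no connectedness hypothesis is needed for region colorings is correct, and your treatment of the type~I and type~II moves is sound: in each case exactly one region of $D'$ (the monogon, resp.\ the bigon) fails to meet $\partial E$, its color is pinned down by one crossing condition via ($\mathcal{V}$1), and for type~II the second crossing condition is automatically satisfied.

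There is, however, a concrete error in your type~III discussion. You assert that ``no new region appears, and all six regions surrounding the configuration already have colors inherited from outside $E$,'' and you then frame the verification as an equivalence between two systems of conditions in known quantities. But the local complement of three mutually crossing strands in $E$ has \emph{seven} regions, not six: the six outer sectors plus the central triangle, which is a bounded complementary region lying entirely inside $E$ and hence inherits no color. Moreover the triangle in $D$ and the triangle in $D'$ sit on opposite sides of the middle strand and are \emph{not} in the canonical correspondence given by restriction to $\mathbb{R}^2\setminus E$. Since the triangle is incident to all three crossings, every one of the three conditions on each side involves this undetermined color, so the correct logical shape is not ``conjunction of knowns $\Leftrightarrow$ conjunction of knowns'' but rather: each crossing condition determines the triangle color uniquely by ($\mathcal{V}$1), and one must check that the three determinations in $D'$ agree with one another given that those in $D$ do --- and \emph{that} consistency is where ($\mathcal{V}$2) of Definition~\ref{s-ternary operation} enters. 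The computation you sketch (label the regions, write $r_4=\langle r_1,r_2,r_3\rangle$ at each crossing, reduce to ($\mathcal{V}$2)) does work once the seventh region is put back in, so the gap is repairable, but as written the type~III step is set up incorrectly and the count of unknowns is wrong.
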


Next, we show how to obtain a cocycle invariant by using region $X$-colorings of a diagram.   

Let $C$ be a region $X$-coloring of $D$.
We define the local chain 
$w^{\rm N}(D, C; \chi) \in C^{\rm N}_1 (X)$ at each crossing $\chi$ by 
$$w^{\rm N}(D, C; \chi) ={\rm sign}(\chi) \big(a,b,c\big)$$ when  $C(r_1)=a, C(r_2)=b$ and $C(r_3)=c$, where $r_1$, $r_2$ and $r_3$ are the regions near $\chi$  as depicted in Figure~\ref{coloring3}.
We define a chain by 
$$\displaystyle W^{\rm N}(D, C)=\sum_{\chi \in \{\mbox{\small crossings of $D$}\}} w^{\rm N}(D, C; \chi) \in C^{\rm N}_1 (X).$$ 

\begin{lemma}{\rm (\cite{Niebrzydowski1})}
The chain $W^{\rm N}(D, C)$ is a $1$-cycle of $C_* ^{\rm N}(X)$. 
\end{lemma}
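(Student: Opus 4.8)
The plan is to compute $\partial_1^{\rm N}\big(W^{\rm N}(D,C)\big)$ explicitly and to show it vanishes by a cancellation argument organized by the arcs of $D$. First I would apply the formula for $\partial_1^{\rm Nie}$ recorded in the Example: for a crossing $\chi$ with nearby regions colored $C(r_1)=a$, $C(r_2)=b$, $C(r_3)=c$ (so that $C(r_4)=\langle a,b,c\rangle$ by the region coloring condition), one gets
\[
\partial_1^{\rm N}\big(w^{\rm N}(D,C;\chi)\big)={\rm sign}(\chi)\big\{(a,b)+(b,c)-(a,\langle a,b,c\rangle)-(\langle a,b,c\rangle,c)\big\}.
\]
Since the degenerate submodule $D_0^{\rm Nie}(X)$ is trivial (there is no index $j$ to impose a degeneracy in degree $0$), this is an honest element of the free module $C_0^{\rm N}(X)$ on $X^2$, and $\partial_1^{\rm N}\big(W^{\rm N}(D,C)\big)$ is the sum of these local contributions over all crossings.

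Next I would interpret each generator $(x,y)\in X^2$ occurring above as the ordered pair of colors of the two regions lying on the two sides of one of the four arcs (semi-arcs) of $D$ meeting at $\chi$. The four regions $r_1,r_2,r_3,r_4$ around a crossing are precisely the regions separated by these four incident arcs, so the four terms are in bijection with the four arcs at $\chi$. Because a region $X$-coloring assigns a single color to each region of $\mathbb R^2\setminus D$, the pair attached to an arc $e$ depends only on $e$ and not on the crossing from which $e$ is viewed.

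Finally, each arc of $D$ has exactly two endpoints, both crossings of $D$, so in the global sum each generator $(x,y)$ produced by an arc $e$ appears exactly twice, once for each endpoint. The crux is to verify that these two occurrences carry opposite signs. I would check this locally: across the four positions of an arc at a crossing, the sign of the corresponding term, taking ${\rm sign}(\chi)$ into account together with the orientation of $e$, is opposite at the two ends, precisely as in the proof that $W(D,C)$ is a $2$-cycle in the local biquandle setting, where the positive and negative terms are assigned to the incoming and outgoing semi-arcs around $\chi$. Grouping the terms of $\partial_1^{\rm N}\big(W^{\rm N}(D,C)\big)$ into these canceling pairs, one arc at a time, then yields $\partial_1^{\rm N}\big(W^{\rm N}(D,C)\big)=0$.

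The main obstacle is exactly this last sign bookkeeping: one must confirm, in every local configuration (positive versus negative crossing, the four arc positions, and the two possible orientations of the arc), that an incoming and an outgoing arc receive opposite signs compatibly with the crossing sign, so that the global pairing cancels. This is a finite but delicate case analysis, most cleanly carried out with the aid of the coloring figures; it is entirely analogous to the classical quandle and biquandle ``cocycle-is-a-cycle'' argument and to Niebrzydowski's original verification in \cite{Niebrzydowski1}.
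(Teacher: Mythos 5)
Your proposal is correct and takes essentially the same route as the paper: the paper itself only cites \cite{Niebrzydowski1} for this lemma, but its proof of the analogous local-biquandle statement (that $W(D,C)$ is a $2$-cycle) is exactly your argument — compute the local boundary $\mathrm{sign}(\chi)\{(a,b)+(b,c)-(a,\langle a,b,c\rangle)-(\langle a,b,c\rangle,c)\}$ at each crossing, identify the four terms with the four semi-arcs incident to $\chi$ (each term being the intrinsic ordered pair of adjacent region colors of that arc), and cancel along each semi-arc, which appears once with each sign. The sign bookkeeping you defer is precisely the paper's assignment of positive terms to incoming and negative terms to outgoing semi-arcs, which holds uniformly at positive and negative crossings under the paper's figure conventions, so each semi-arc contributes $+P(e)$ at the end where it enters a crossing and $-P(e)$ at the end where it leaves one, giving $\partial_1^{\rm N}\big(W^{\rm N}(D,C)\big)=0$.
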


Let $A$ be  an abelian group. For a $1$-cocycle $\theta \in C^1_{\rm N}(X; A)$, we define
\[
\begin{array}{l}
\mathcal{H}^{\rm N}(D)=\{[W^{\rm N}(D, C)] \in H^{\rm N}_1(X) \ | \ C \in {\rm Col}_{X}^{\rm R} (D) \}, and \\[5pt]
\Phi_{\theta}^{\rm N}(D)=\{\theta(W^{\rm N}(D, C)) \in A \ | \ C \in {\rm Col}_{X}^{\rm R} (D) \}
\end{array}
\]
as multisets. 
\begin{theorem}{\rm (\cite{Niebrzydowski1})}
$\mathcal{H}^{\rm N}(D)$ and $\Phi_{\theta}^{\rm N}(D)$ are invariants of $L$.
\end{theorem}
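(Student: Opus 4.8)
The plan is to reduce the statement, exactly as in the local biquandle case treated in Section~\ref{Local biquandle homology groups and cocycle invariants}, to a move-by-move check. The preceding proposition already provides, for diagrams $D$ and $D'$ of $L$ that differ by a single oriented Reidemeister move, a bijection ${\rm Col}_X^{\rm R}(D)\to{\rm Col}_X^{\rm R}(D')$, $C\mapsto C'$, obtained by keeping the colors outside the disk of the move and propagating inside it. Since $W^{\rm N}(D,C)$ is a $1$-cycle of $C_\ast^{\rm N}(X)$, it determines a class $[W^{\rm N}(D,C)]\in H_1^{\rm N}(X)$; for a $1$-cocycle $\theta\in C_{\rm N}^1(X;A)$ — i.e.\ one with $\theta\circ\partial_2^{\rm N}=0$ — the value $\theta(W^{\rm N}(D,C))\in A$ depends only on this class, and since the cochains in $C_{\rm N}^\ast(X;A)$ are by definition functions on $C_\ast^{\rm Nie}(X)/D_\ast^{\rm Nie}(X)$, they annihilate any chain lying in $D_\ast^{\rm Nie}(X)$. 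Hence it suffices to prove that if $C\mapsto C'$ under the above bijection then $[W^{\rm N}(D,C)]=[W^{\rm N}(D',C')]$ in $H_1^{\rm N}(X)$; summing over all colorings then yields $\mathcal{H}^{\rm N}(D)=\mathcal{H}^{\rm N}(D')$ and $\Phi_\theta^{\rm N}(D)=\Phi_\theta^{\rm N}(D')$ as multisets, which is the assertion.

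For a type~I move the single new crossing has, among the four regions surrounding it, a pair that the (empty) loop identifies, and the coloring rule of Definition~\ref{def:regioncoloring1} at that crossing then forces its local chain to lie in the degenerate submodule $D_1^{\rm Nie}(X)$; hence $W^{\rm N}$ is literally unchanged in $C_1^{\rm N}(X)$. For a type~II move the two new crossings have opposite signs, and propagating the coloring shows that they carry the same triple of region colors, so their local chains cancel in $C_1^{\rm N}(X)$; again $W^{\rm N}$ is unchanged. For a type~III move, write $a,b,c,d$ for the colors of the four regions meeting the central triangle and read off the colors of the remaining regions on each side of the move, naming the forced ones via axiom ($\mathcal{V}$1); axioms ($\mathcal{V}$2)-(i) and ($\mathcal{V}$2)-(ii) of Definition~\ref{s-ternary operation} are exactly what guarantee that the outer colorings of $D$ and $D'$ coincide. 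Comparing the six signed triples contributed by the three crossings on each side with the six-term expansion of $\partial_2^{\rm Nie}\big((a,b,c,d)\big)$ computed in the preceding Example, one obtains
\[
W^{\rm N}(D',C')-W^{\rm N}(D,C)=\pm\,\partial_2^{\rm Nie}\big((a,b,c,d)\big)\in{\rm Im}\,\partial_2^{\rm N},
\]
so the two cycles are homologous.

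The main obstacle is the type~III computation: one must fix the dictionary between the seven regions appearing in the move and the colors $a,b,c,d$, verify via ($\mathcal{V}$2) that the two sides produce the same coloring on the outer regions, and then match signs and triples term-by-term against $\partial_2^{\rm Nie}$. This bookkeeping is the one-dimension-down analogue of the type~III computation already carried out for local biquandles in Section~\ref{Local biquandle homology groups and cocycle invariants} (there the difference of $2$-cycles equalled $\partial_3^{\rm LB}$ of a $3$-chain), and it can in fact be transported through the correspondence of Remark~\ref{rem:correspondence} and Lemma~\ref{lem:tribrackets1} relating vertical- and horizontal-tribrackets; the remaining moves are immediate from degeneracy and from cancellation of opposite signs.
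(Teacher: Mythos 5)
Your proposal is correct. The paper itself does not prove this theorem --- it is stated as a citation of \cite{Niebrzydowski1} --- but your move-by-move argument (RI contributions dying in the degenerate subcomplex $D_*^{\rm Nie}(X)$, RII contributions cancelling in pairs of opposite sign, and the RIII difference being exactly $\pm\partial_2^{\rm Nie}\big((a,b,c,d)\big)$ as computed in the Example of Subsection~\ref{subsec:NHom}) is precisely the cited proof, and it is the one-dimension-down mirror of the proof the paper does write out for the analogous local-biquandle theorem in Section~\ref{Local biquandle homology groups and cocycle invariants}. The only alternative route available \emph{within} this paper is the one you mention in passing: transport the local-biquandle invariance through the chain isomorphism $\varphi_2$ of Section~\ref{Correspondence between our work and Niebrzydowski's work}, using $W^{\rm N}(D,\overline{C})=\varphi_2\big(W(D,C)\big)$; either way the content is the same.
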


\subsection{Region colorings of surface-link diagrams, cocycle invariants}\label{subsec: Region colorings of surface-link diagrams, cocycle invariants}

Let $(X, \langle\, \rangle )$ be a knot-theoretic vertical-ternary-quasigroup.
Let $D$ be a diagram of a surface-link.
\begin{definition}\label{def:regioncoloring2}
A \textit{region $X$-coloring} of $D$ is a map $C: \mathcal{R}(D) \to X$ satisfying the following condition:
\begin{itemize}
\item For a double point curve with regions $r_1$, $r_2$, $r_3$ and $r_4$ near it as depicted in Figure~\ref{doublepoint3}, let $C(r_1)=a$, $C(r_2)=b$ and $C(r_3)= c$. Then $C(r_4) = \langle a,b,c \rangle $ holds, see Figure~\ref{doublepoint3}.
\begin{figure}
  \begin{center}
    \includegraphics[clip,width=5.0cm]{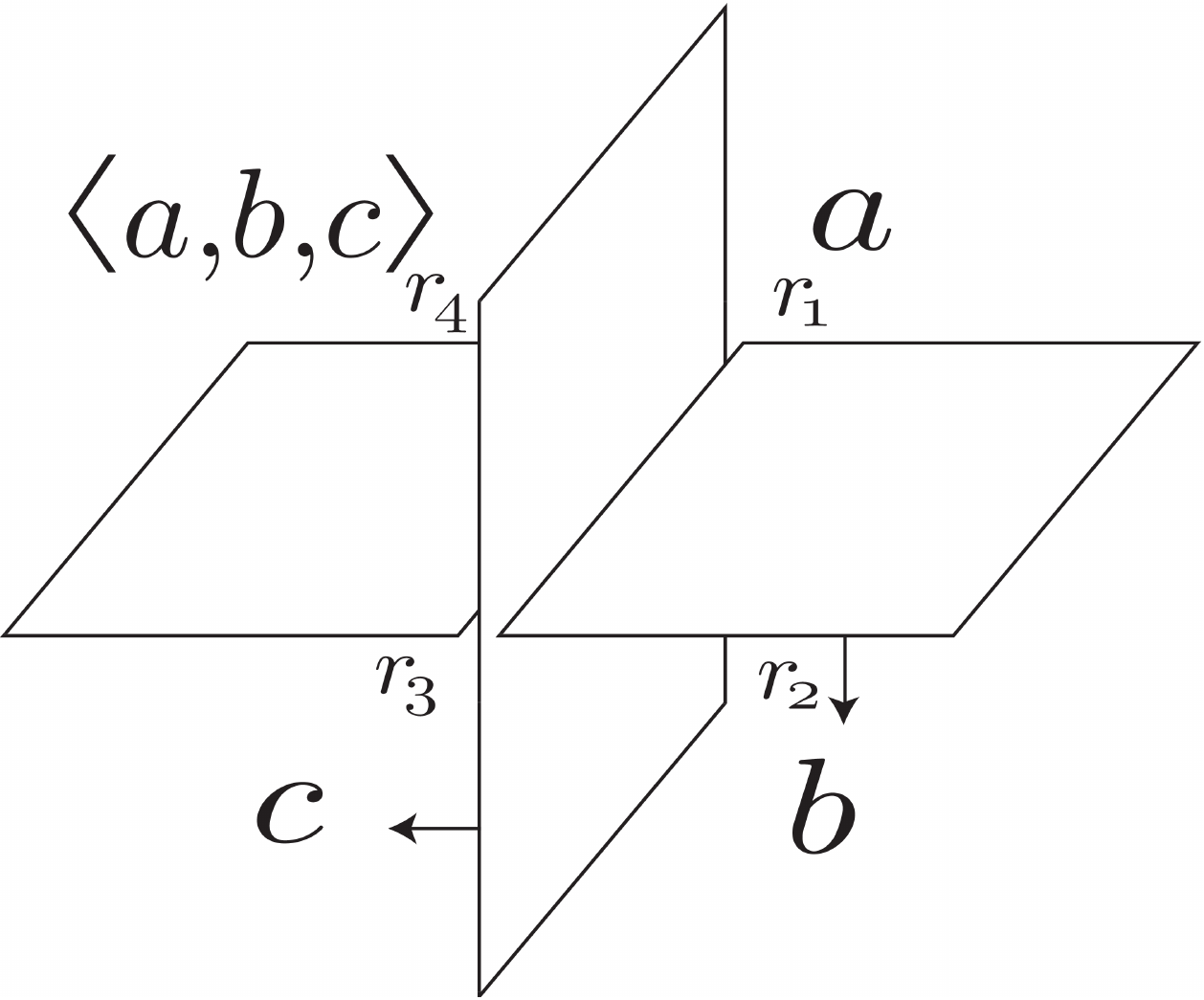}
    \label{doublepoint3}
  \end{center}
\end{figure}
\end{itemize}
We denote by ${\rm Col}_{X}^{\rm R} (D) $ the set of region $X$-colorings of $D$. 
We call $C(r)$ for a  region $r$ the {\it color} of $r$. 
We call a pair $(D,C)$ of a diagram $D$ and a region $X$-coloring $C$ of $D$ a {\it region $X$-colored diagram}.
\end{definition}
\begin{proposition}{\rm (\cite{Niebrzydowski1})}
Let $D$ and $D'$ be diagrams of surface-links. 
If $D$ and $D'$ represent the same surface-link, then there exists a bijection between  
${\rm Col}_{X}^{\rm R} (D) $ and ${\rm Col}_{X}^{\rm R} (D')  $.
\end{proposition}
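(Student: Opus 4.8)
The plan is to run the standard ``diagram move by diagram move'' argument, using the fact recalled above that any two diagrams of the same surface-link are connected by a finite sequence of Roseman moves; it therefore suffices to produce, for each single Roseman move relating diagrams $D$ and $D'$, a bijection ${\rm Col}_X^{\rm R}(D)\to{\rm Col}_X^{\rm R}(D')$. First I would fix a $3$-ball $B$ in which the move is performed and a coloring $C\in{\rm Col}_X^{\rm R}(D)$; outside $B$ the regions of $D$ and of $D'$ are canonically identified, so I set $C'$ equal to $C$ there. What then has to be checked is that the colors of the regions of $D'$ meeting the interior of $B$ are uniquely determined by the coloring condition of Definition~\ref{def:regioncoloring2}, and that the resulting map $\mathcal{R}(D')\to X$ is indeed a region $X$-coloring. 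Since the same recipe applied to the inverse Roseman move carries $C'$ back to $C$, the assignment $C\mapsto C'$ is a bijection, and composing these bijections along a Roseman sequence from $D$ to $D'$ proves the proposition.

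For the Roseman moves that do not involve a triple point --- those introducing or deleting double point curves, those pushing a sheet across a double point curve, and the branch-point move --- only a single double point curve configuration changes, and the color of each newly appearing region is constrained by the coloring rule along exactly one double point curve. Existence and uniqueness of that color is precisely one of the division axioms ($\mathcal{V}$1)(i), (ii), or (iii) of Definition~\ref{s-ternary operation}, according to which of the four regions around the curve is the new one; and when two double point curves are created or destroyed together the two resulting constraints are mutually compatible by construction. The branch-point move is handled exactly as in the biquandle/quandle case: where the double point curve terminates, two of the four local regions merge, and one checks that the coloring rule degenerates there consistently, imposing no new relation on $\langle\,\rangle$.

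The hard part will be the Roseman move taking place near a triple point. There the three sheets cut $B$ into $2^3=8$ octants; three distinguished octants carry colors $a,b,c$, and the coloring rule along the three double point curves emanating from the triple point forces the remaining five octants to carry colors that are iterated vertical-tribracket words in $a,b,c$. Writing these words out on both sides of the move and equating the colors of the octants the move leaves unchanged produces exactly the ``tetrahedral'' identities, which one must then recognize as consequences of ($\mathcal{V}$2)(i) and ($\mathcal{V}$2)(ii) --- using Lemma~\ref{lem:tribrackets1} to pass between $\langle\,\rangle$ and its corresponding horizontal-tribracket $[\,]$, and Lemma~\ref{lem:tribrackets2} in the orientation cases where the relations are most naturally phrased through $[\,]$. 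This has to be repeated for each coherent choice of normal orientations of the three sheets, since the orientations decide which division of $\langle\,\rangle$ is applied at each double curve; the verification is mechanical but long, and essentially all the content of the proof lives here.

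I would also remark that, when $D$ and $D'$ are \emph{connected}, there is a shortcut: by Remark~\ref{Rem:coloringtranslation2} the translation maps give ${\rm Col}_X^{\rm R}(D)\cong{\rm Col}_{X^2}^{\rm SS}(D)$ and ${\rm Col}_X^{\rm R}(D')\cong{\rm Col}_{X^2}^{\rm SS}(D')$, while Proposition~\ref{prop:coloring2} gives ${\rm Col}_{X^2}^{\rm SS}(D)\cong{\rm Col}_{X^2}^{\rm SS}(D')$, and composing the three yields the bijection. Only the direct argument above applies to disconnected diagrams, however, so that is the one to carry through for the statement as given.
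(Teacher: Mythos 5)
First, a point of comparison: the paper does not actually prove this proposition --- it is imported from \cite{Niebrzydowski1} with a citation and no argument --- so there is no in-paper proof to measure yours against. Your move-by-move template is the standard one, and it is exactly the template the paper does use for the link-diagram analogue (Proposition~\ref{prop:coloring1}); your closing observation that the shortcut through Remark~\ref{Rem:coloringtranslation2} and Proposition~\ref{prop:coloring2} is only available for connected diagrams, so that the direct argument is the one needed for the statement as given, is correct and worth making.

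The genuine problem sits in the paragraph you yourself identify as carrying ``essentially all the content'': your geometric setup for the hard case is wrong. The Roseman move that actually requires the axiom ($\mathcal{V}$2) is the tetrahedral (quadruple-point) move, in which a sheet is pushed across a triple point. Inside the ball where that move takes place there are \emph{four} transverse sheets, not three; they cut the ball into $15$ local regions, not $2^3=8$ octants; there are four triple points on each side of the move; and the independent colors are a base color together with four colors $a,b,c,d$ obtained by crossing each sheet once, with the remaining ten region colors expressed as iterated bracket words. The configuration you describe --- three sheets, eight octants, three distinguished colors $a,b,c$ --- is the local picture of a single triple point, or of the move that creates or cancels a pair of triple points (which involves only three sheets and is comparatively easy). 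So the ``mechanical but long'' verification you defer is set up for the wrong move, and as written it would not produce the identities that you then claim follow from ($\mathcal{V}$2)(i) and ($\mathcal{V}$2)(ii). To repair the argument you must redo that case with four sheets: the matching of the outer region colors on the two sides of the move reduces, via Lemma~\ref{lem:tribrackets1} and Lemma~\ref{lem:tribrackets2}, to ($\mathcal{V}$2) applied at each of the four triple points. The remaining moves are treated correctly in your sketch.
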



Next, we show how to obtain a cocycle invariant by using region $X$-colorings of a diagram.   

Let $C$ be a region $X$-coloring of $D$.
We define the local chain 
$w^{\rm N}(D, C; \tau) \in C^{\rm N}_2 (X)$ at each triple point $\tau$ by 
$$w^{\rm N}(D, C; \tau) ={\rm sign}(\tau) \big(a,b,c,d\big)$$ when  $C(r_1)=a, C(r_2)=b$, $C(r_3)=c$ and $C(r_4)=d$, where $r_1$, $r_2$, $r_3$ and $r_4$ are the regions near $\tau$  as depicted in Figure~\ref{triplepoint2}.
\begin{figure}
  \begin{center}
    \includegraphics[clip,width=10cm]{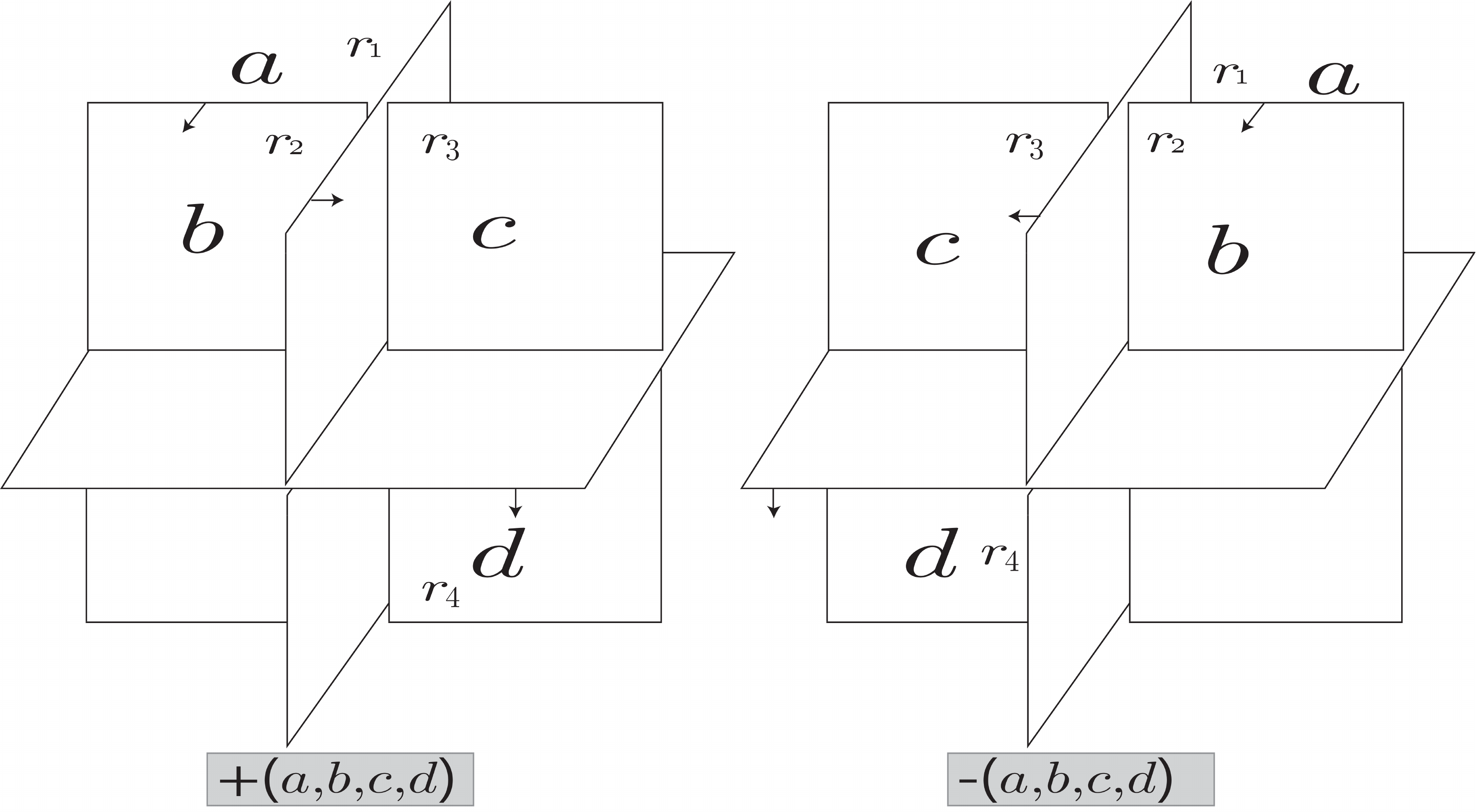}
    \label{triplepoint2}
  \end{center}
\end{figure}
We define a chain by 
$$\displaystyle W^{\rm N}(D, C)=\sum_{\tau \in \{\mbox{\small triple points of $D$}\}} w^{\rm N}(D, C; \tau) \in C^{\rm N}_2 (X).$$ 

\begin{lemma}{\rm (\cite{Niebrzydowski1})}
The chain $W^{\rm N}(D, C)$ is a $2$-cycle of $C_* ^{\rm N}(X)$. 
\end{lemma}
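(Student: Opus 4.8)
The plan is to prove $\partial_2^{\rm N}\big(W^{\rm N}(D,C)\big)=0$ by a local‑to‑global argument, parallel to the proof of the corresponding statement for region colorings of link diagrams (the preceding lemma, \cite{Niebrzydowski1}) and to the classical cocycle theory of surface‑links by colorings, cf.\ \cite{CarterJelsovskyKamadaLangfordSaito03}. Since $\partial_2^{\rm N}=\partial_2^{\rm Nie}$ passes to the quotient complex by Lemma~\ref{lemma:degenerate(N)}, it suffices to prove that
\[
\sum_{\tau}{\rm sign}(\tau)\,\partial_2^{\rm Nie}\big((a_\tau,b_\tau,c_\tau,d_\tau)\big)\ \in\ D_1^{\rm Nie}(X),
\]
where the sum ranges over the triple points $\tau$ of $D$ and $(a_\tau,b_\tau,c_\tau,d_\tau)$ lists the colors of the four regions $r_1,r_2,r_3,r_4$ near $\tau$ as in Figure~\ref{triplepoint2}. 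Recall that
\begin{align*}
\partial_2^{\rm Nie}\big((a,b,c,d)\big) &= (b,c,d)-\big(a,\langle a,b,c\rangle,\langle\langle a,b,c\rangle,c,d\rangle\big)-(\langle a,b,c\rangle,c,d)\\
&\qquad+(a,b,\langle b,c,d\rangle)+\big(\langle a,b,\langle b,c,d\rangle\rangle,\langle b,c,d\rangle,d\big)-(a,b,c).
\end{align*}

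\textbf{Local step.} Exactly six double‑point branches emanate from a triple point $\tau$ (the three pairwise intersections of the top, middle and bottom sheets, each appearing on two sides of $\tau$). Transverse to any such branch the broken surface diagram looks like a crossing of a link diagram, and the four regions of $D$ meeting that branch, with $C$ restricted to them, form a region‑colored crossing in the sense of Definition~\ref{def:regioncoloring1}; it thus carries a signed $1$‑chain $\pm(x,y,z)\in C_1^{\rm Nie}(X)$, the sign being dictated by the co‑orientation induced on the branch by the orientation data at $\tau$. I would verify, by reading off the colors of all regions around $\tau$ from $a_\tau,b_\tau,c_\tau,d_\tau$ using the relation ``$C(r_4)=\langle C(r_1),C(r_2),C(r_3)\rangle$ across every double‑point curve'', that the six signed $1$‑chains so obtained are precisely the six terms of $\partial_2^{\rm Nie}\big((a_\tau,b_\tau,c_\tau,d_\tau)\big)$ displayed above. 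This identification is exactly where axioms ($\mathcal{V}$1) and ($\mathcal{V}$2) of Definition~\ref{s-ternary operation} enter: they make the coloring consistent around the triple point and force the nested expressions $\langle\langle a,b,c\rangle,c,d\rangle$, $\langle a,b,\langle b,c,d\rangle\rangle$, etc., to appear.

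\textbf{Global step.} Each double‑point curve of $D$ is a circle or an arc whose endpoints are triple points or branch points. In the displayed sum, the signed $1$‑chain attached to an interior double‑point arc joining triple points $\tau_1$ and $\tau_2$ occurs once from the $\tau_1$‑term and once from the $\tau_2$‑term; the two $1$‑chains coincide since the coloring is globally consistent along the arc, and the induced co‑orientations point oppositely along the arc at its two ends, so — together with ${\rm sign}(\tau_1)$, ${\rm sign}(\tau_2)$ and the sign pattern of $\partial_2^{\rm Nie}$ — these contributions cancel. A double‑point arc joining a triple point to a branch point contributes only the term coming from the triple point; but in the local model of a broken surface diagram near a branch point two of the four regions bounding the terminal double‑point curve coincide, and a short check shows the associated triple $(x,y,z)$ then satisfies $\langle x,y,z\rangle=y$, i.e.\ lies in $D_1^{\rm Nie}(X)$. (Double‑point circles and arcs between two branch points carry no triple point, hence contribute nothing.) Adding up, the displayed sum lies in $D_1^{\rm Nie}(X)$, which is the assertion.

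\textbf{Main obstacle.} The delicate part is the local step: one must fix a bookkeeping convention for the six double‑point branches at a triple point — which two sheets cross on each branch, the cyclic position of the four regions around it, and the induced co‑orientation — precise enough that the six signed $1$‑chains reproduce the six terms of $\partial_2^{\rm Nie}$ with the correct signs. The branch‑point claim likewise needs the precise normal form near a branch point to pin down the merging pair of regions and hence the degeneracy. Both are routine but notation‑heavy; once they are in place the rest of the argument is purely formal.
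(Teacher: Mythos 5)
The paper states this lemma without proof, simply citing \cite{Niebrzydowski1}, so there is no in-paper argument to compare against; the nearest analogue is the paper's proof that $W(D,C)$ is a $2$-cycle for classical link diagrams, which uses exactly the local-to-global cancellation you describe. Your strategy --- identify the six terms of $\partial_2^{\rm Nie}\big(w^{\rm N}(D,C;\tau)\big)$ with the six double-point branches at $\tau$, cancel in pairs along each double-point arc joining two triple points, and absorb the arcs ending at branch points into $D_1^{\rm Nie}(X)$ because the two diagonally opposite regions that merge there are precisely the pair whose equality is the condition $\langle x,y,z\rangle=y$ --- is the standard and correct one, and both deferred verifications do go through. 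Be aware, however, that as written your proposal postpones exactly the two substantive checks (the sign-correct matching of the six branches with the six boundary terms, and the branch-point degeneracy), and the latter is sensitive to the orientation conventions of Figures~\ref{coloring1} and \ref{coloring3}: with the wrong reading of which semi-arcs share the region colored $a$, the merging pair at a branch point appears to be $\{r_1,r_3\}$ rather than $\{r_2,r_4\}$, which would give a non-degenerate triple, so this is the one place where the bookkeeping genuinely must be carried out rather than asserted.
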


Let  $A$ be  an abelian group. For a $2$-cocycle $\theta \in C^2_{\rm N}(X; A)$, we define
\[
\begin{array}{l}
\mathcal{H}^{\rm N}(D)=\{[W^{\rm N}(D, C)] \in H^{\rm N}_2(X) \ | \ C \in {\rm Col}_{X}^{\rm R} (D) \}, and \\[5pt]
\Phi_{\theta}^{\rm N}(D)=\{\theta(W^{\rm N}(D, C)) \in A \ | \ C \in {\rm Col}_{X}^{\rm R} (D) \}
\end{array}
\]
as multisets. 
\begin{theorem}{\rm (\cite{Niebrzydowski1})}
$\mathcal{H}^{\rm N}(D)$ and $\Phi_{\theta}^{\rm N}(D)$ are invariants of $F$.
\end{theorem}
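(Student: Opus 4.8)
The plan is to follow the standard template for cocycle invariants of surface-links, the one Niebrzydowski used, which goes back to the quandle cocycle invariants of Carter--Jelsovsky--Kamada--Langford--Saito and is the surface analogue of the type-III verification already carried out in this paper for semi-arc colorings. By the proposition immediately preceding the statement, whenever diagrams $D$ and $D'$ of $F$ are related by a single Roseman move there is a bijection $\mathrm{Col}_X^{\rm R}(D)\to\mathrm{Col}_X^{\rm R}(D')$, $C\mapsto C'$, obtained by keeping the colors of all regions outside the ball where the move is performed and extending uniquely inside; so it suffices to fix such a correspondence $C\leftrightarrow C'$ and to prove that $W^{\rm N}(D',C')-W^{\rm N}(D,C)$ lies in $\mathrm{Im}\,\partial_3^{\rm N}$. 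Granting this, $[W^{\rm N}(D,C)]=[W^{\rm N}(D',C')]$ in $H_2^{\rm N}(X)$, so re-indexing the multiset $\mathcal H^{\rm N}$ along the coloring bijection shows it is unchanged by the move; and since a $2$-cocycle $\theta\in C^2_{\rm N}(X;A)$ annihilates $\mathrm{Im}\,\partial_3^{\rm N}$ (as $\delta^2_{\rm N}\theta=\theta\circ\partial_3^{\rm N}=0$) and is by definition already a homomorphism on the non-degenerate quotient $C_2^{\rm N}(X)=C_2^{\rm Nie}(X)/D_2^{\rm Nie}(X)$, the same re-indexing shows $\Phi_\theta^{\rm N}(D)$ is unchanged. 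Running this over a finite sequence of Roseman moves connecting any two connected diagrams of $F$ gives the theorem.

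Next I would dispose of the Roseman moves by type. A move that involves no triple point — one that only modifies double point curves, or creates or cancels a pair of branch points — leaves the colored configuration around every triple point of the diagram literally unchanged, so $W^{\rm N}(D,C)=W^{\rm N}(D',C')$ on the nose. A move that creates or destroys a canceling pair of triple points contributes two local chains $\pm\,\mathrm{sign}(\tau)\,(a,b,c,d)$ which either occur with opposite signs and the same $4$-tuple, or else produce a $4$-tuple in $D_2^{\rm Nie}(X)$ because two adjacent region colors around the new triple points agree; in either case the change in $W^{\rm N}$ is $0$ in $C_2^{\rm N}(X)$. The only move that genuinely exercises the chain complex is the tetrahedral (Reidemeister-III-type) Roseman move: the local picture carries four triple points before the move and four after. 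Here I would read off from the coloring rule $C(r_4)=\langle a,b,c\rangle$ of Definition~\ref{def:regioncoloring2} the colors $(a_0,a_1,a_2,a_3)$ of the four regions around a distinguished "source" triple point, propagate them through the configuration, and record the eight signed $4$-tuples assigned to the eight triple points; comparing with the formula for $\partial_3^{\rm Nie}\big((a_0,a_1,a_2,a_3)\big)$ — the six $\pm$-terms built from the $y_{(i,j)}$, which for $n=3$ look like the shifted-up analogue of the $\partial_2^{\rm Nie}$ expression in the Example — one verifies $W^{\rm N}(D',C')-W^{\rm N}(D,C)=\pm\,\partial_3^{\rm N}\big((a_0,a_1,a_2,a_3)\big)$. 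The $\mathrm{sign}(\tau)$ factors take care of the various orientations of the sheets and of the move, exactly as positive versus negative crossings are handled in the link case.

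The step I expect to be the main obstacle is precisely this tetrahedral computation: correctly matching the combinatorics of the regions around the four triple points, before and after the move, with the index scheme $y_{(i,j)}$ in $\partial_3^{\rm Nie}$, and making sure the branch-point and triple-point-creation moves really do contribute degenerate or canceling chains. This bookkeeping is the surface counterpart of the type-III verification given earlier for semi-sheet $X^2$-colorings, and one clean shortcut is available: via the translation maps $T,T^{-1}$ of Remark~\ref{Rem:coloringtranslation2} one identifies $\mathrm{Col}_X^{\rm R}(D)$ with $\mathrm{Col}_{X^2}^{\rm SS}(D)$ and $W^{\rm N}(D,C)$ with the local-biquandle cycle $W(D,T^{-1}(C))$ up to the (purely algebraic, and hence move-independent) chain isomorphism underlying the paper's main result, so the statement reduces to the already-established invariance of $[W(D,C)]\in H_3^{\rm LB}(X)$ and of $\theta(W(D,C))$. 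Done directly, the remaining details are routine once the colored local pictures are drawn, and may be left to the reader just as the analogous link-case details are in the text.
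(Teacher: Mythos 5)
The paper offers no proof of this statement: it is imported verbatim from \cite{Niebrzydowski1}, so there is nothing in the text to compare your argument against line by line. Your reconstruction follows the standard template (invariance under Roseman moves, with branch-point moves handled by degeneracy, triple-point pair creation by sign cancellation, and the tetrahedral move by exhibiting a boundary), which is indeed how such theorems are proved; and your proposed shortcut --- identifying ${\rm Col}_X^{\rm R}(D)$ with ${\rm Col}_{X^2}^{\rm SS}(D)$ via $T^{-1}$ and transporting $W^{\rm N}(D,C)=\varphi_3\bigl(W(D,T^{-1}(C))\bigr)$ through the chain isomorphism --- is precisely the computation the paper itself carries out in Section~\ref{Correspondence between our work and Niebrzydowski's work}, so that route is legitimate provided the local biquandle version for surface-links is established independently (the paper also leaves that proof to the reader).

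There is, however, a concrete error in your key step. You write the tetrahedral-move discrepancy as $\pm\,\partial_3^{\rm N}\bigl((a_0,a_1,a_2,a_3)\bigr)$ and describe ``the six $\pm$-terms.'' But $C_3^{\rm Nie}(X)$ is generated by $X^{3+2}=X^5$, so $\partial_3^{\rm N}$ must be applied to a \emph{five}-tuple --- the colors of the five relevant regions in the complement of the four sheets meeting at the quadruple point of the isotopy --- and its formula $\sum_{i=0}^{3}(-1)^i\{\cdots\}$ produces \emph{eight} signed $4$-tuples, which is exactly what is needed to match the $4+4$ triple points on the two sides of the move. The ``six terms'' count belongs to $\partial_2^{\rm Nie}$ of a $4$-tuple, i.e.\ to the link-case type~III verification; as written your bookkeeping is off by one degree and the claimed identity does not typecheck. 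The fix is routine (replace the $4$-tuple by the appropriate $5$-tuple of region colors and match eight terms against eight triple points), but since you flagged this computation as the crux of the direct argument, the slip should be repaired or the proof should be run entirely through the translation-map reduction.
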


\section{Correspondence between our work and Niebrzydowski's work}
\label{Correspondence between our work and Niebrzydowski's work}
\subsection{Correspondence between (co)homology groups}\label{subsec:correspondence1}
From now on, $x|_{a\mapsto b}$ means the element $x$ after replacing $a$ in $x$ with $b$.

Let $X$ be  a   set,  $[\, ]$ a horizontal-tribracket on $X$, and $\langle\, \rangle$ the corresponding vertical-tribracket of $[\, ]$. 

Define  a homomorphism 
$\varphi_n: C_n^{\rm LB}(X) \to C_{n-1}^{\rm N}(X)$ by 
\[
\varphi_n \Big(\big((a,b_1), (a, b_2), \ldots , (a ,b_n) \big)\Big) = (z_0, z_1, \ldots , z_n)
\]
if $n\geq 1$, and $\varphi_n=0$ otherwise, where  
\[
\left\{
\begin{array}{ll}
z_0=a,\\
z_1=b_1,\\
z_i=[z_{i-2}, z_{i-1}, z_{i-1}|_{b_{i-1} \mapsto b_i}] & (i\in\{2,\ldots, n \}).
\end{array}
\right.
\]
Define a homomorphism  
$\psi_n:  C_{n-1}^{\rm N}(X) \to C_{n}^{\rm LB}(X)$ by
\[
\psi_n \big((a, a_1, a_2, \ldots , a_n)\big) = \big((a,w_1), (a,w_2), \ldots , (a, w_n)\big)
\]
if $n\geq 1$, and $\psi_n=0$ otherwise, where  
\[
\left\{
\begin{array}{ll}
w_1=a_1,\\
w_2=\langle a,a_1,a_2\rangle,\\
w_i= w_{i-1} |_{a_{i-1} \mapsto \langle a_{i-2}, a_{i-1}, a_i \rangle}& (i\in \{3,\ldots , n\}).
\end{array}
\right.
\]
\begin{example} For $a,b,c,d \in X$, 
\[
\begin{array}{rcl}
\varphi_2 \Big(\big((a,b), (a,c) \big)\Big) &=& \big(a,b, [a,b,c]\big), \\
\varphi_3 \Big(\big((a,b), (a,c), (a,d) \big)\Big) &=& \big(a,b, [a,b,c], [b,[a,b,c],[a,b,d]]\big),\\
\psi_2 \Big(\big(a,b,c \big)\Big) &=& \big((a,b), (a, \langle a,b,c\rangle)\big),\\
\psi_3 \Big(\big(a,b,c ,d \big)\Big) &=&\big((a,b), (a, \langle a,b,c\rangle), (a, \langle a,b,\langle b,c,d\rangle \rangle)\big). \\
\end{array}
\]
\end{example}

We postpone the proof of the fact that $\varphi_n$ and $\psi_n$ are chain maps and the inverses of each other
to Subsection~\ref{subsec:chainmaps}.
This implies that  for each $n\in\mathbb Z$, we have the induced isomorphism $\varphi^*_n: H_n^{\rm LB}(X) \to H_{n-1}^{\rm N}(X)$ defined by 
\[
\varphi^*_n \Big(\Big[\big((a,b_1), (a, b_2), \ldots , (a ,b_n) \big)\Big]\Big) = \Big[\varphi_n\Big(\big((a,b_1), (a, b_2), \ldots , (a ,b_n) \big)\Big)\Big],
\]
and the inverse isomorphism $\psi^*_n:  H_{n-1}^{\rm N}(X) \to H_{n}^{\rm LB}(X)$ defined by  
\[
\psi^*_n \Big(\Big[(a, a_1, a_2, \ldots , a_n)\Big]\Big) = \Big[ \psi_n \Big( (a, a_1, a_2, \ldots , a_n) \Big) \Big].
\]

Let $A$ be an abelian group.  
The bijective chain map $\varphi_n$ induces the bijective chain map  $\varphi_n\otimes {\rm id}: C_n^{\rm LB}(X;A) \to C_{n-1}^{\rm N}(X;A)$, and hence, we have an isomorphism $\varphi^*_n \otimes {\rm id}: H_n^{\rm LB}(X;A) \to H_{n-1}^{\rm N}(X;A)$.
The bijective chain map $\varphi_n$ induces the bijective cochain map $\varphi^n:  C^{n-1}_{\rm N}(X;A) \to C^n_{\rm LB}(X;A)$ defined by $\varphi^n (f) = f\circ \varphi_n$, and hence, we have an isomorphism 
${\varphi}_*^n:  H^{n-1}_{\rm N}(X;A) \to H^n_{\rm LB}(X;A)$. 
Thus we have the following theorem. 
\begin{theorem}\label{mainthm1}
For any $n\in \mathbb Z$, we have 
\[
H_n^{\rm LB}(X; A) \cong H_{n-1}^{\rm N}(X; A) \mbox{ and }
H^n_{\rm LB}(X; A) \cong H^{n-1}_{\rm N}(X; A).
\]

\end{theorem}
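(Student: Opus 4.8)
The theorem is an immediate consequence of the facts, deferred to Subsection~\ref{subsec:chainmaps}, that the maps $\varphi_n : C_n^{\rm LB}(X) \to C_{n-1}^{\rm N}(X)$ and $\psi_n : C_{n-1}^{\rm N}(X) \to C_n^{\rm LB}(X)$ are mutually inverse chain isomorphisms. Granting this, the strategy is to apply the standard functorial machinery: tensoring a chain isomorphism with $A$ yields a chain isomorphism of the complexes with coefficients, and applying $\mathrm{Hom}(-,A)$ yields a cochain isomorphism; passing to (co)homology then gives the claimed isomorphisms. So the work is really just to assemble the pieces carefully, being attentive to the degree shift: $\varphi_n$ maps degree $n$ to degree $n-1$.

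\textbf{Step 1: reduce to the integral statement.} First I would invoke the (postponed) result that $\{\varphi_n\}$ is a chain map with inverse $\{\psi_n\}$, so that $\varphi_n \partial_{n+1}^{\rm LB} = \partial_n^{\rm N} \varphi_{n+1}$ and $\varphi_n, \psi_n$ are bijective with $\psi_n \varphi_n = \mathrm{id}$, $\varphi_n \psi_n = \mathrm{id}$. Since these are isomorphisms of free $\mathbb{Z}$-modules commuting (up to the degree shift) with the differentials, they induce isomorphisms $\varphi_n^* : H_n^{\rm LB}(X) \to H_{n-1}^{\rm N}(X)$ on homology, with inverse $\psi_n^*$, exactly as displayed before the theorem statement.

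\textbf{Step 2: homology with coefficients.} Apply the functor $-\otimes_{\mathbb{Z}} A$ to the chain isomorphism. Because each $\varphi_n$ is a $\mathbb{Z}$-module isomorphism, $\varphi_n \otimes \mathrm{id} : C_n^{\rm LB}(X;A) \to C_{n-1}^{\rm N}(X;A)$ is again an isomorphism, and it commutes with $\partial^{\rm LB}\otimes\mathrm{id}$ and $\partial^{\rm N}\otimes\mathrm{id}$ since $\varphi$ does so before tensoring. Hence it is a chain isomorphism and induces $\varphi_n^*\otimes\mathrm{id} : H_n^{\rm LB}(X;A) \xrightarrow{\ \cong\ } H_{n-1}^{\rm N}(X;A)$, with inverse induced by $\psi_n\otimes\mathrm{id}$. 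This gives the first displayed isomorphism.

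\textbf{Step 3: cohomology with coefficients.} Apply $\mathrm{Hom}_{\mathbb{Z}}(-,A)$ to the chain isomorphism. The contravariant functor turns $\varphi_n$ into $\varphi^n : C^{n-1}_{\rm N}(X;A) \to C^n_{\rm LB}(X;A)$, $\varphi^n(f) = f\circ\varphi_n$, which is an isomorphism with inverse $g \mapsto g\circ\psi_n$. It is a cochain map: for $f \in C^{n-1}_{\rm N}(X;A)$ one checks $\delta^n_{\rm LB}(\varphi^n f) = (\varphi^n f)\circ\partial_{n+1}^{\rm LB} = f\circ\varphi_n\circ\partial_{n+1}^{\rm LB} = f\circ\partial_n^{\rm N}\circ\varphi_{n+1} = (\delta^{n-1}_{\rm N} f)\circ\varphi_{n+1} = \varphi^{n+1}(\delta^{n-1}_{\rm N} f)$, using the chain-map relation from Step~1. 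Therefore $\varphi^n$ induces an isomorphism $\varphi_*^n : H^{n-1}_{\rm N}(X;A) \to H^n_{\rm LB}(X;A)$, giving the second displayed isomorphism. Finally, I would note that $n$ ranges over all of $\mathbb{Z}$ and that for $n \le 0$ both sides vanish, so the statement holds trivially there and the content is concentrated in $n \ge 1$.

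\textbf{Main obstacle.} There is no real obstacle in this theorem itself — it is pure homological bookkeeping once Steps 1--3 are organized around the degree shift. The genuine work, namely verifying that $\varphi_n$ and $\psi_n$ are well-defined on the quotient complexes $C_\ast^{\rm LB}$ and $C_\ast^{\rm N}$ (i.e.\ respect the degenerate submodules $D_\ast$), commute with the boundary maps, and are inverse to each other, is precisely what is postponed to Subsection~\ref{subsec:chainmaps}; that computation — unwinding the recursive definitions of the $z_i$ and $w_i$ and matching them against the two rather different-looking boundary formulas — is where all the difficulty lies.
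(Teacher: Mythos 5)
Your proposal is correct and matches the paper's own argument: the paper likewise defers the verification that $\varphi_n$ and $\psi_n$ are mutually inverse chain maps to Subsection~\ref{subsec:chainmaps} and then obtains the theorem by tensoring with $A$ and applying $\mathrm{Hom}(-,A)$ to pass to (co)homology. No substantive difference.
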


\subsection{Correspondence between cocycle invariants of links}

 Let $\varphi_2: C_2^{\rm LB}(X) \to C_1^{\rm N}(X)$ be the chain map defined in Subsection~\ref{subsec:correspondence1},  and $\psi_2$  the inverse chain map of $\varphi_2$.

Suppose that we have exactly the same situation as in  Subsection~\ref{subsection:linkinvariant}. 
Let $\langle\, \rangle: X^3\to X$ be the corresponding vertical-tribracket of the horizontal-tribracket $[\, ]$, and $\overline{C}$  the corresponding region $X$-coloring of the semi-arc $X^2$-coloring $C$ through the translation map $T$ defined in Remark~\ref{Rem:coloringtranslation}.

\begin{figure}
  \begin{center}
    \includegraphics[clip,width=8cm]{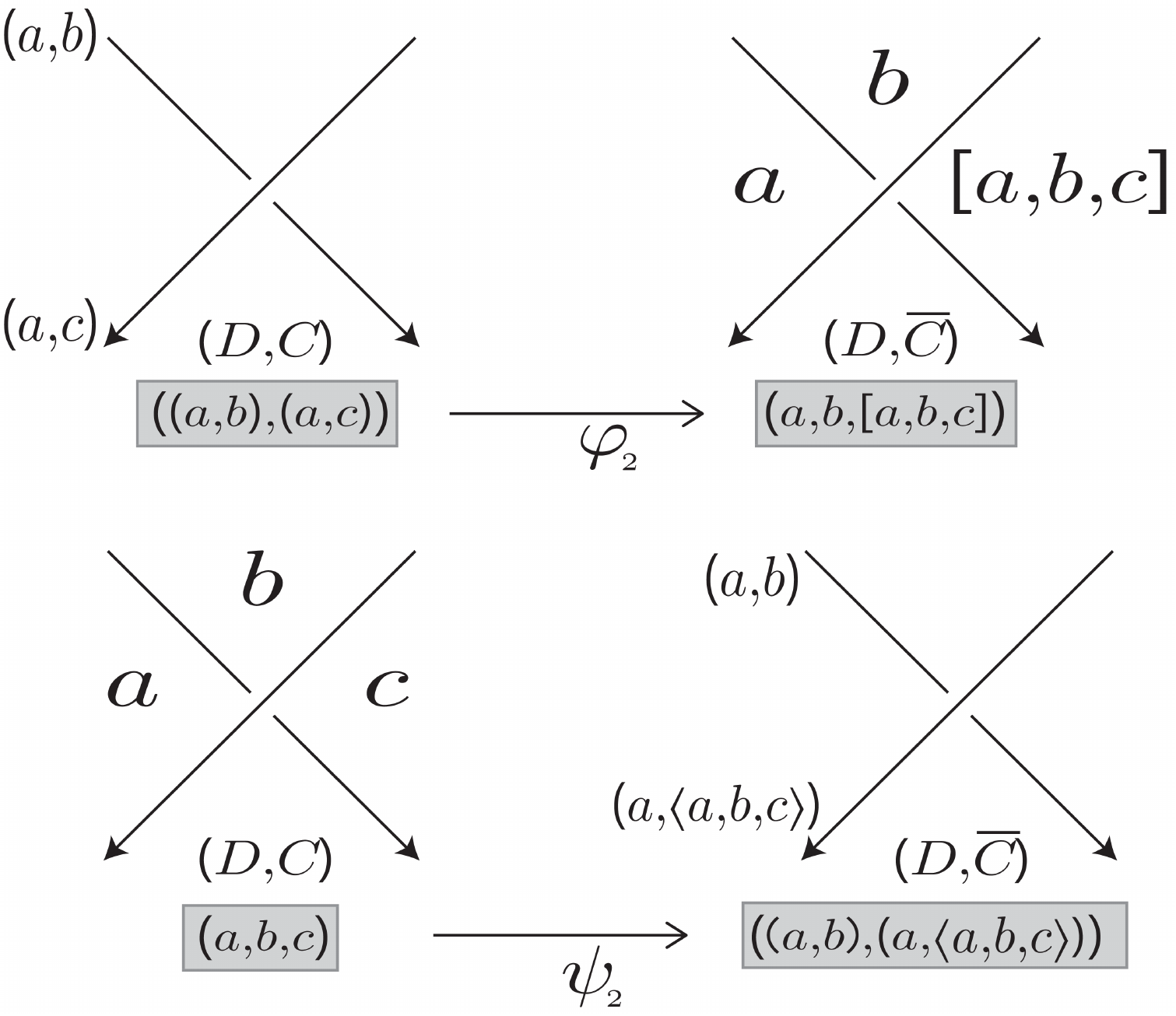}
    \label{doublepoint4}
  \end{center}
\end{figure}
At a crossing  $\chi$ of $D$ as depicted in the upper of Figure~\ref{doublepoint4}, we have 
\[
\begin{array}{ll}
w^{\rm N}(D, \overline{C}; \chi) 
&={\rm sign}(\chi) \big(a,b,[a,b,c]\big)\\[3pt]
&=\varphi_2 \Big({\rm sign}(\chi)\big((a,b),(a,c)\big)\Big)\\[3pt]
&=\varphi_2\big(w(D, C; \chi)\big),
\end{array}
\]
which implies that $W^{\rm N}(D, \overline{C}) =\varphi_2\big(W(D, C)\big)$. Thus we have 
\[
\begin{array}{ll}
\mathcal{H}^{\rm N}(D)&=\Big\{\big[W^{\rm N}(D, \overline{C})\big]  ~\Big|~ \ \overline{C} \in {\rm Col}_{X}^{\rm R} (D) \Big\}\\[4pt]
&=\Big\{\varphi_2^{\ast}\big(\big[W(D, C)\big]\big) ~\Big|~ \ C \in {\rm Col}_{X^2}^{\rm SA} (D) \Big\}\\[4pt]
&=\varphi_2^{\ast}\big(\mathcal{H}(D)\big).
\end{array}
\]

For the 2-cocycle $\theta: C^{\rm LB}_2(X) \to A$, set $\overline{\theta}: C^{\rm N}_1(X) \to A$ by $\overline{\theta}=\theta \circ \psi_2$, which is a $1$-cocycle of $C_{\rm N}^{\ast}(X)$. Then we have
\[
\begin{array}{ll}
\overline{\theta}\big(w^{\rm N}(D, \overline{C}; \chi)\big)&= \theta \circ \psi_2 \Big(\varphi_2\big(w(D, C; \chi)\big)\Big)\\[3pt]
&=\theta \big(w(D, C; \chi)\big),
\end{array}
\]
which implies $\overline{\theta}\big(W^{\rm N}(D, \overline{C})\big)=\theta(W(D, C))$. 
Thus we have $\Phi_{\overline{\theta}}^{\rm N}(D) =\Phi_{\theta}(D)$. 
Therefore, the Niebrzydowski's invariants  $\mathcal{H}^{\rm N}(L)$ and   $\Phi_{\overline{\theta}}^{\rm N}(L)$ can be obtained from our invariants $\mathcal{H}(L)$ and $\Phi_{\theta}(L)$, respectively.

Conversely, suppose that we have exactly  the same situation as in  Subsection~\ref{subsec: Region colorings of link diagrams, cocycle invariants}. 
We may assume that a diagram $D$ is connected. 
Let $[\,]: X^3 \to X$ be the corresponding horizontal-tribracket of the vertical-tribracket $\langle\, \rangle$, and $\overline{C}$  the corresponding semi-arc $X^2$-coloring of the region $X$-coloring $C$ through the translation map $T^{-1}$ defined in Remark~\ref{Rem:coloringtranslation}. 

At a crossing  $\chi$ of $D$ depicted in the lower of Figure~\ref{doublepoint4}, we have 
\[
\begin{array}{ll}
w(D, \overline{C}; \chi) 
&={\rm sign}(\chi)\big((a,b),(a,\langle a,b,c\rangle)\big)\\[3pt]
&=\psi_2 \big({\rm sign}(\chi) (a,b,c)\big)\\[3pt]
&=\psi_2\big(w^{\rm N}(D, C; \chi)\big),
\end{array}
\]
which implies that $W(D, \overline{C}) =\psi_2\big(W^{\rm N}(D, C)\big)$. Thus we have 
\[
\begin{array}{ll}
\mathcal{H}(D)&=\Big\{\big[W(D, \overline{C})\big]  ~\Big|~ \ \overline{C} \in {\rm Col}_{X^2}^{\rm SA} (D) \Big\}\\[4pt]
&=\Big\{\psi_2^{\ast}\big(\big[W^{\rm N}(D, C)\big]\big) ~\Big|~ \ C \in {\rm Col}_{X}^{\rm R} (D) \Big\}\\[4pt]
&=\psi_2^{\ast}\big(\mathcal{H}^{\rm N}(D)\big).
\end{array}
\]

For the 1-cocycle $\theta: C^{\rm N}_1(X) \to A$, set $\overline{\theta}: C^{\rm LB}_2(X) \to A$ by $\overline{\theta}=\theta \circ \varphi_2$, which is a 2-cocycle of $C_{\rm LB}^{\ast}(X)$. Then we have
\[
\begin{array}{ll}
\overline{\theta}\big(w(D, \overline{C}; \chi)\big)&=\theta \circ \varphi_2 \Big( \psi_2\big( w^{\rm N}(D, C; \chi)\big)\Big)\\[3pt]
&=\theta \big( w^{\rm N}(D, C; \chi)\big),
\end{array}
\]
which implies $\overline{\theta}\big(W(D, \overline{C})\big)=\theta\big(W^{\rm N}(D, C)\big)$. 
Thus we have $\Phi_{\overline{\theta}}(D) =\Phi_{\theta}^{\rm N}(D)$. 
Therefore, our invariants $\mathcal{H}(L)$ and $\Phi_{\overline{\theta}}(L)$ can be obtained from the Niebrzydowski's invariants $\mathcal{H}^{\rm N}(L)$  and $\Phi_{\theta}^{\rm N}(L)$, respectively.

As a consequence,  we have the following theorem.
\begin{theorem}
Let $X$ be  a   set,  and let $[\, ]$ and $\langle\, \rangle$ be  a horizontal- and a vertical-tribracket on $X$, respectively, that are corresponding each other.
Let $\theta$ and $\theta'$ be a $2$-cocycle of $C^{*}_{\rm LB} (X)$ and a $1$-cocycle of $C^*_{N}(X)$, respectively, that are corresponding each other through the isomorphisms $\varphi^2_*$ and  $\psi^2_*$. 
Let $L$ be a link.
Then we have 

\[
\varphi_2^* \big(\mathcal{H}(L)\big) = \mathcal{H}^{\rm N}(L) \mbox{ and } \psi_2^* \big(\mathcal{H}^{\rm N}(L)\big) = \mathcal{H}(L),
\]
and 
\[
\Phi_{\theta}(L)=\Phi_{\theta'}^{\rm N}(L).
\]

\end{theorem}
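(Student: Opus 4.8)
The plan is to reduce both sides to a single connected diagram and then assemble the diagram-level computations carried out just above the statement. First, by the invariance results of Subsections~\ref{subsection:linkinvariant} and \ref{subsec: Region colorings of link diagrams, cocycle invariants}, the four multisets $\mathcal{H}(L)$, $\mathcal{H}^{\rm N}(L)$, $\Phi_\theta(L)$, $\Phi_{\theta'}^{\rm N}(L)$ do not depend on the chosen diagram, so I would fix one connected diagram $D$ of $L$ (which always exists), for which $\mathcal{H}(L)=\mathcal{H}(D)$, $\mathcal{H}^{\rm N}(L)=\mathcal{H}^{\rm N}(D)$, $\Phi_\theta(L)=\Phi_\theta(D)$, $\Phi_{\theta'}^{\rm N}(L)=\Phi_{\theta'}^{\rm N}(D)$, and compute everything from $D$. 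Connectedness is exactly what makes the bijective translation map $T\colon{\rm Col}_{X^2}^{\rm SA}(D)\to{\rm Col}_{X}^{\rm R}(D)$ of Remark~\ref{Rem:coloringtranslation} available, identifying semi-arc $X^2$-colorings of $D$ with region $X$-colorings of $D$.

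Next I would invoke the crossing-level identity. For $C\in{\rm Col}_{X^2}^{\rm SA}(D)$ write $\overline{C}=T(C)$. The computation at a crossing $\chi$ displayed above (with Figure~\ref{doublepoint4}) gives $w^{\rm N}(D,\overline{C};\chi)=\varphi_2\big(w(D,C;\chi)\big)$, and summing over all crossings of $D$ yields $W^{\rm N}(D,\overline{C})=\varphi_2\big(W(D,C)\big)$. Since $\varphi_2$ is a chain map inducing the isomorphism $\varphi_2^{*}\colon H_2^{\rm LB}(X)\to H_1^{\rm N}(X)$ and $T$ is a bijection of coloring sets, this gives
\[
\mathcal{H}^{\rm N}(D)=\big\{\,[\varphi_2(W(D,C))]\;\big|\;C\in{\rm Col}_{X^2}^{\rm SA}(D)\,\big\}=\big\{\,\varphi_2^{*}[W(D,C)]\;\big|\;C\,\big\}=\varphi_2^{*}\big(\mathcal{H}(D)\big).
\]
Applying the inverse isomorphism $\psi_2^{*}=(\varphi_2^{*})^{-1}$, which exists because $\varphi_2$ and $\psi_2$ are mutually inverse chain maps (Subsection~\ref{subsec:chainmaps}), gives $\psi_2^{*}\big(\mathcal{H}^{\rm N}(L)\big)=\mathcal{H}(L)$. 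This settles the first two equalities.

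For the cocycle invariants I would first replace $\theta'$ by a convenient representative. Put $\overline{\theta}=\theta\circ\psi_2$, a $1$-cocycle of $C_{\rm N}^{\ast}(X;A)$. Since $\psi_2\circ\varphi_2={\rm id}$ we have $\overline{\theta}\circ\varphi_2=\theta$, i.e.\ $\varphi^2_{*}[\overline{\theta}]=[\theta]$; as the hypothesis on $\theta$ and $\theta'$ means $\varphi^2_{*}[\theta']=[\theta]$ and $\varphi^2_{*}$ is injective, $\theta'$ and $\overline{\theta}$ are cohomologous $1$-cocycles. Because a coboundary annihilates the cycle $W^{\rm N}(D,\overline{C})$ (this is the argument behind the Proposition that cohomologous cocycles give equal cocycle invariants), $\Phi_{\theta'}^{\rm N}(D)=\Phi_{\overline{\theta}}^{\rm N}(D)$. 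On the other hand, for each $C$,
\[
\overline{\theta}\big(w^{\rm N}(D,\overline{C};\chi)\big)=\theta\circ\psi_2\circ\varphi_2\big(w(D,C;\chi)\big)=\theta\big(w(D,C;\chi)\big),
\]
so $\overline{\theta}\big(W^{\rm N}(D,\overline{C})\big)=\theta\big(W(D,C)\big)$; running over all $C$ (equivalently over all $\overline{C}$, by bijectivity of $T$) gives $\Phi_{\overline{\theta}}^{\rm N}(D)=\Phi_\theta(D)$. Chaining the equalities yields $\Phi_{\theta'}^{\rm N}(L)=\Phi_\theta(L)$.

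The only ingredient that is not purely formal is the crossing-level identity $w^{\rm N}(D,\overline{C};\chi)=\varphi_2\big(w(D,C;\chi)\big)$ — that the region coloring read off by the translation map around a crossing matches the defining recursion of $\varphi_2$ — but this is precisely the content of the computation already performed with Figure~\ref{doublepoint4}, so the proof of the theorem is an assembly of pieces rather than new work. The one point to be careful about is that the hypothesis on $\theta,\theta'$ lives at the level of cohomology classes, which is why one must pass through the statement that cohomologous cocycles give equal cocycle invariants in order to legitimately swap in the explicit representative $\overline{\theta}=\theta\circ\psi_2$.
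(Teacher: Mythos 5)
Your proposal is correct and follows essentially the same route as the paper: the crossing-level identity $w^{\rm N}(D,\overline{C};\chi)=\varphi_2(w(D,C;\chi))$ summed over crossings, the bijectivity of the translation map $T$, and the substitution $\overline{\theta}=\theta\circ\psi_2$ are exactly the ingredients the paper assembles before stating the theorem. Your extra step of passing from the cohomology-class hypothesis on $\theta'$ to the explicit representative $\overline{\theta}$ via the ``cohomologous cocycles give equal invariants'' proposition is a careful touch the paper leaves implicit, but it is a refinement rather than a different argument.
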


\subsection{Correspondence between cocycle invariants of surface-links}

 Let $\varphi_3: C_3^{\rm LB}(X) \to C_2^{\rm N}(X)$ be the chain map defined in Subsection~\ref{subsec:correspondence1},  and $\psi_3$  the inverse chain map of $\varphi_3$.

Suppose that we have exactly the same situation as in  Subsection~\ref{subsection:surfaceinvariant}.
Let $\langle\, \rangle: X^3\to X$ be the corresponding vertical-tribracket of the horizontal-tribracket $[\, ]$, and $\overline{C}$  the corresponding region $X$-coloring of the semi-sheet $X^2$-coloring $C$ through the translation map $T$ defined in Remark~\ref{Rem:coloringtranslation2}.

\begin{figure}
  \begin{center}
    \includegraphics[clip,width=12cm]{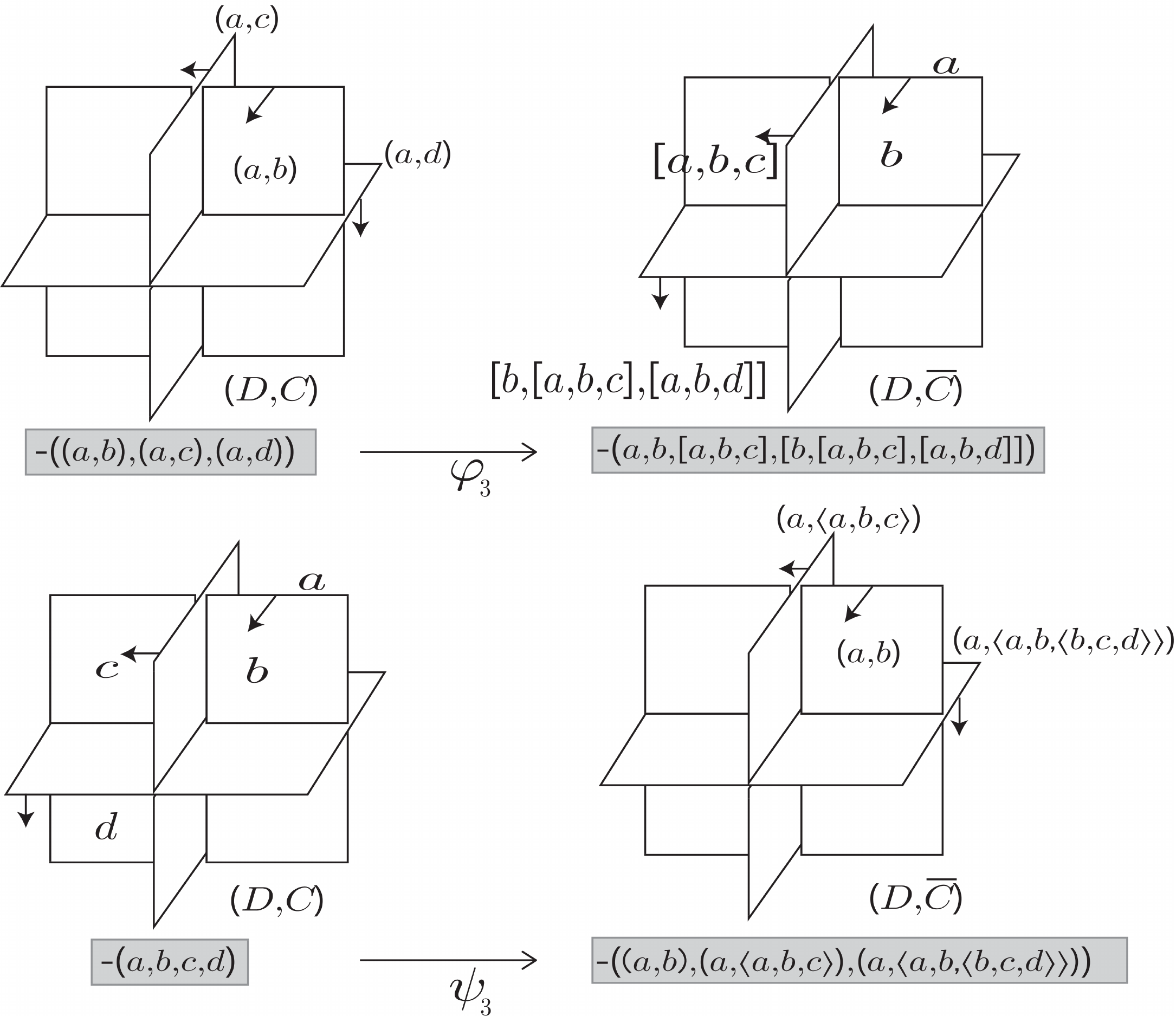}
    \label{triplepoint3}
  \end{center}
\end{figure}
At a triple point  $\tau$ of $D$ as depicted in the upper of Figure~\ref{triplepoint3}, we have 
\[
\begin{array}{ll}
w^{\rm N}(D, \overline{C};  \tau) 
&={\rm sign}(\chi) \Big(a,b,[a,b,c],\big[b,[a,b,c],[a,b,d]\big]\Big) \\[3pt]
&=\varphi_3 \Big({\rm sign}(\chi)\big((a,b),(a,c), (a,d)\big)\Big)\\[3pt]
&=\varphi_3\big(w(D, C; \tau)\big),
\end{array}
\]
which implies that $W^{\rm N}(D, \overline{C}) =\varphi_3\big(W(D, C)\big)$. 
Thus we have 
\[
\begin{array}{ll}
\mathcal{H}^{\rm N}(D)&=\Big\{\big[W^{\rm N}(D, \overline{C})\big]  ~\Big|~ \ \overline{C} \in {\rm Col}_{X}^{\rm R} (D) \Big\}\\[4pt]
&=\Big\{\varphi_3^{\ast}\big(\big[W(D, C)\big]\big) ~\Big|~ \ C \in {\rm Col}_{X^2}^{\rm SS} (D) \Big\}\\[4pt]
&=\varphi_3^{\ast}\big(\mathcal{H}(D)\big).
\end{array}
\]

For the 3-cocycle $\theta: C^{\rm LB}_3(X) \to A$, set $\overline{\theta}: C^{\rm N}_2(X) \to A$ by $\overline{\theta}=\theta \circ \psi_3$, which is a $2$-cocycle of $C_{\rm N}^{\ast}(X)$. Then we have
\[
\begin{array}{ll}
\overline{\theta}\big(w^{\rm N}(D, \overline{C}; \tau)\big)&=\theta \circ \psi_3 \Big(\varphi_3\big(w(D, C; \tau)\big) \Big)\\[3pt]
&=\theta \big(w(D, C; \tau)\big),
\end{array}
\]
which implies $\overline{\theta}\big(W^{\rm N}(D, \overline{C})\big)=\theta(W(D, C))$. 
Thus we have $\Phi_{\overline{\theta}}^{\rm N}(D) =\Phi_{\theta}(D)$. 
Therefore, the Niebrzydowski's invariants $\mathcal{H}^{\rm N}(F)$ and  $\Phi_{\overline{\theta}}^{\rm N}(F)$ can be obtained from our invariants $\mathcal{H}(F)$ and $\Phi_{\theta}(F)$, respectively.

Conversely, suppose that we have exactly  the same situation as in  Subsection~\ref{subsec: Region colorings of surface-link diagrams, cocycle invariants}.
We may assume that a diagram $D$ is connected. 
Let $[\,]: X^3 \to X$ be the corresponding horizontal-tribracket of the vertical-tribracket $\langle\, \rangle$, and $\overline{C}$  the corresponding semi-sheet $X^2$-coloring of the region $X$-coloring $C$ through the translation map $T^{-1}$ defined in Remark~\ref{Rem:coloringtranslation2}. 

At a triple point  $\tau$ of $D$ depicted in the lower of Figure~\ref{triplepoint3}, we have 
\[
\begin{array}{ll}
w(D, \overline{C}; \tau) 
&={\rm sign}(\tau)\Big(\big(a,b\big),\big(a,\langle a,b,c\rangle\big),\big(a,\langle a,b,\langle b,c,d\rangle \rangle \big)\Big)\\[3pt]
&=\psi_3 \big({\rm sign}(\tau) (a,b,c,d)\big)\\[3pt]
&=\psi_3\big(w^{\rm N}(D, C; \tau)\big),
\end{array}
\]
which implies that $W(D, \overline{C}) =\psi_3\big(W^{\rm N}(D, C)\big)$. Thus we have 
\[
\begin{array}{ll}
\mathcal{H}(D)&=\Big\{\big[W(D, \overline{C})\big]  ~\Big|~ \ \overline{C} \in {\rm Col}_{X^2}^{\rm SS} (D) \Big\}\\[4pt]
&=\Big\{\psi_3^{\ast}\big(\big[W^{\rm N}(D, C)\big]\big) ~\Big|~ \ C \in {\rm Col}_{X}^{\rm R} (D) \Big\}\\[4pt]
&=\psi_3^{\ast}\big(\mathcal{H}^{\rm N}(D)\big).
\end{array}
\]

For the 2-cocycle $\theta: C^{\rm N}_2(X) \to A$, set $\overline{\theta}: C^{\rm LB}_3(X) \to A$ by $\overline{\theta}=\theta \circ \varphi_3$, which is a 3-cocycle of $C_{\rm LB}^{\ast}(X)$. Then we have
\[
\begin{array}{ll}
\overline{\theta}\big(w(D, \overline{C}; \tau)\big)&=\theta \circ \varphi_3 \Big(\psi_3\big(w^{\rm N}(D, C; \tau)\big)\Big)\\[3pt]
&=\theta \big( w^{\rm N}(D, C; \tau)\big),
\end{array}
\]
which implies $\overline{\theta}\big(W(D, \overline{C})\big)=\theta\big(W^{\rm N}(D, C)\big)$. 
Thus we have $\Phi_{\overline{\theta}}(D) =\Phi_{\theta}^{\rm N}(D)$. 
Therefore, our invariants $\mathcal{H}(L)$ and $\Phi_{\overline{\theta}}(L)$ can be obtained from the Niebrzydowski's invariants $\mathcal{H}^{\rm N}(L)$ and $\Phi_{\theta}^{\rm N}(L)$, respectively.

As a consequence,  we have the following theorem.
\begin{theorem}
Let $X$ be  a   set,  and let $[\, ]$ and $\langle\, \rangle$ be  a horizontal- and a vertical-tribracket on $X$, respectively, that are corresponding each other.
Let $\theta$ and $\theta'$ be a $3$-cocycle of $C^{*}_{\rm LB} (X)$ and a $2$-cocycle of $C^*_{N}(X)$, respectively, that are corresponding each other through the isomorphisms $\varphi^3_*$ and  $\psi^3_*$. 
Let $F$ be a surface-link.
Then we have 
\[
\varphi_3^* \big(\mathcal{H}(F)\big) = \mathcal{H}^{\rm N}(F) \mbox{ and } \psi_3^* \big(\mathcal{H}^{\rm N}(F)\big) = \mathcal{H}(F),
\]
and 
\[
\Phi_{\theta}(F)=\Phi_{\theta'}^{\rm N}(F).
\]

\end{theorem}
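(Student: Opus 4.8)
The plan is to assemble the statement from the local computations carried out in the paragraphs immediately preceding it, together with two ingredients: first, that $\varphi_3$ and $\psi_3$ are mutually inverse chain maps, so that they induce mutually inverse isomorphisms $\varphi_3^*$ and $\psi_3^*$ on homology (this is established in Subsection~\ref{subsec:chainmaps}); and second, that the translation maps $T \colon {\rm Col}_{X^2}^{\rm SS}(D) \to {\rm Col}_{X}^{\rm R}(D)$ and $T^{-1}$ of Remark~\ref{Rem:coloringtranslation2} are mutually inverse bijections of coloring sets. I would begin by fixing a connected diagram $D$ of $F$, which is harmless since every surface-link admits a connected diagram and, by the argument recalled around Proposition~\ref{prop:coloring2}, connected diagrams suffice.

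The heart of the proof is the local identity at a triple point. Given a semi-sheet $X^2$-coloring $C$ of $D$ with corresponding region $X$-coloring $\overline{C} = T(C)$, one has $w^{\rm N}(D, \overline{C}; \tau) = \varphi_3\big(w(D, C; \tau)\big)$ at each triple point $\tau$; this is exactly the displayed computation before the theorem, whose engine is the formula $\varphi_3\big((a,b),(a,c),(a,d)\big) = \big(a,b,[a,b,c],[b,[a,b,c],[a,b,d]]\big)$ matching the colors that $\overline{C}$ assigns to the four distinguished regions around $\tau$ in Figure~\ref{triplepoint3}. Summing over all triple points and using linearity of $\varphi_3$ gives $W^{\rm N}(D, \overline{C}) = \varphi_3\big(W(D, C)\big)$, hence $[W^{\rm N}(D, \overline{C})] = \varphi_3^*\big([W(D, C)]\big)$ in $H_2^{\rm N}(X)$. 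Since $C \mapsto T(C)$ is a bijection, letting $C$ range over ${\rm Col}_{X^2}^{\rm SS}(D)$ and passing to multisets yields $\mathcal{H}^{\rm N}(F) = \varphi_3^*\big(\mathcal{H}(F)\big)$; applying the inverse isomorphism $\psi_3^*$ and using $\psi_3^* \circ \varphi_3^* = {\rm id}$ gives $\psi_3^*\big(\mathcal{H}^{\rm N}(F)\big) = \mathcal{H}(F)$. Running the same argument in the opposite direction, i.e., starting from a region coloring $C$, setting $\overline{C} = T^{-1}(C)$, and using $W(D, \overline{C}) = \psi_3\big(W^{\rm N}(D, C)\big)$ together with $\varphi_3 \circ \psi_3 = {\rm id}$, re-establishes the same two identities and confirms their mutual consistency.

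For the cocycle invariant part, the hypothesis that $\theta$ and $\theta'$ correspond through $\varphi_3^*$ and $\psi_3^*$ means, at the cochain level, that one may choose the representative $\theta' = \theta \circ \psi_3$. Then, combining $W^{\rm N}(D, \overline{C}) = \varphi_3(W(D, C))$ with $\psi_3 \circ \varphi_3 = {\rm id}$, one obtains $\theta'\big(W^{\rm N}(D, \overline{C})\big) = \theta\big(\psi_3(\varphi_3(W(D,C)))\big) = \theta\big(W(D,C)\big)$, so ranging over the bijectively corresponding colorings produces the equality of multisets $\Phi_{\theta'}^{\rm N}(F) = \Phi_{\theta}(F)$. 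Since $\theta'$ is determined only up to a coboundary, one also invokes the proposition that $\Phi$ is unchanged under passage to a cohomologous cocycle, so that the statement is independent of the chosen representative.

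The step I expect to be the main obstacle is the triple-point local identity $w^{\rm N}(D, \overline{C}; \tau) = \varphi_3(w(D, C; \tau))$ itself: one must verify that the translation map $T$, applied in a neighbourhood of a triple point, labels the four distinguished regions around $\tau$ by $a$, $b$, $[a,b,c]$, $[b,[a,b,c],[a,b,d]]$ in precisely the order fixed by Niebrzydowski's $2$-chain convention from Figure~\ref{triplepoint2}. This is a careful but routine chase through the eight octants of a triple point, repeatedly applying the defining condition of a semi-sheet coloring at the three double-point curves and the rule $\overline{C}(r) = \widetilde{C}(x^{(\varepsilon)})$ defining the induced region colors; the signs take care of themselves since both local chains carry the same factor ${\rm sign}(\tau)$. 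Once this identity is checked — as indicated in Figure~\ref{triplepoint3} — the rest is bookkeeping, and the surface-link statement reduces to the verbatim analogue of the link statement established one subsection earlier, with $\varphi_2, \psi_2$ replaced by $\varphi_3, \psi_3$.
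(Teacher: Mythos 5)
Your proposal is correct and follows essentially the same route as the paper: the local identity $w^{\rm N}(D,\overline{C};\tau)=\varphi_3\big(w(D,C;\tau)\big)$ at each triple point (via the translation map $T$ and Figure~\ref{triplepoint3}), summation and the bijection of coloring sets to get the statements about $\mathcal{H}$, and $\theta'=\theta\circ\psi_3$ together with $\psi_3\circ\varphi_3={\rm id}$ for the cocycle invariants. The only cosmetic difference is that you derive $\psi_3^*\big(\mathcal{H}^{\rm N}(F)\big)=\mathcal{H}(F)$ by inverting the first identity, whereas the paper also runs the symmetric argument starting from region colorings; both are equivalent.
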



\subsection{Chain maps $\varphi_n$ and $\psi_n$}\label{subsec:chainmaps}
In this subsection, we will discuss the homomorphisms $\varphi_n$ and $\psi_n$ defined in Subsection~\ref{subsec:correspondence1}. 
In particular, we show that they are chain maps between the chain complexes $C_*^{\rm LB} (X)$ and $C_*^{\rm N} (X)$, and  that they are the inverses of each other.

\begin{notation} In this subsection, we use the following notations.
\begin{itemize}
\item The bold angle bracket $\blangle a_0, a_1, a_2, \ldots , a_n \brangle $ means 
$$\Bigg\langle a_{0}, a_1, \bigg\langle a_{1}, a_{2}, \Big\langle a_2, a_3, \big\langle \cdots  \big\langle a_{n-3}, a_{n-2}, \langle a_{n-2}, a_{n-1}, a_n\rangle \big\rangle \cdots \big\rangle  \Big\rangle\bigg\rangle\Bigg\rangle.$$


\item The augmented bold square bracket$\blsq a_0, a_1, a_2, \ldots , a_n; b \brsq $ means 
$$\Bigg[ a_{n-1}, a_n, \bigg[ a_{n-2}, a_{n-1}, \Big[\cdots \Big[ a_2, a_3, \big[ a_1, a_2, [a_0, a_1, b] \big] \Big] \cdots \Big]\bigg]\Bigg].$$
\end{itemize}
\end{notation}

Let $X$ be a   set $X$, $[\, ]$ a horizontal-tribracket on $X$, and $\langle\, \rangle$
the corresponding vertical-tribracket of $[\,]$.

\begin{lemma}\label{lem:tribracketfomula}
For nonnegative integers $m, n$ and $i$, let  $a_0, a_1, \ldots, a_{\max\{m, n\}}, b, x_{i+1},$ $x_{i+2}, \ldots , x_{m}, y_{i+1}, \ldots , y_{n}\in X$.
\begin{enumerate}
\renewcommand{\labelenumi}{(\arabic{enumi})}

\item \label{formula1-1} For $n\geq 2$ and $i\leq n-2$, we have 
\[
\begin{array}{l}
\blangle a_0, a_1, \ldots , a_n \brangle = \blangle a_0, a_1, \ldots ,a_{i}, \blangle a_i, a_{i+1}, \ldots , a_n \brangle\brangle.
\end{array}
\]

\item \label{formula1-2} For $n\geq 2$, we have 
\[
\begin{array}{l}
\blangle a_0, a_1, \ldots , a_n \brangle = \blangle a_0, \langle  a_0, a_1, a_2 \rangle, a_2, a_3,\ldots , a_n \brangle.
\end{array}
\]

\item\label{formula1-3}  For $n\geq 1$, we have 
$$\blangle a_0, a_1,\ldots , a_n ,\blsq a_0, a_1,\ldots , a_n; b \brsq\brangle =b. $$

\item \label{formula1-4}  For $m\geq 1$, $n\geq 2$ and $0\leq i \leq \min\{ m-1, n-2\}$, we have 
$$\begin{array}{l}
\blsq a_0, a_1,\ldots , a_{i}, x_{i+1}, \ldots ,x_{m} ; \blangle a_0, a_1,\ldots , a_i, y_{i+1}, \ldots , y_n \brangle \brsq \\[5pt]
=\blsq a_{i},x_{i+1}, \ldots ,x_{m} ; \blangle a_i, y_{i+1}, \ldots , y_n \brangle \brsq.
\end{array}
$$

In particular, 
$$\begin{array}{l}
\blsq a_0, a_1,\ldots , a_{m} ; \blangle a_0, a_1,\ldots , a_n \brangle \brsq \\[5pt]
=\left\{\begin{array}{cl}
\blangle a_m, a_{m+1},\ldots , a_n \brangle &(m<n-1),\\[5pt]
a_n&(m=n-1),\\[5pt]
\blsq a_{n-1}, a_n,\ldots , a_m; a_n \brsq &(m>n-1). \\
\end{array}
\right.
\end{array}
$$

\end{enumerate}
\end{lemma}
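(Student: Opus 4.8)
All five identities are statements purely about the horizontal-tribracket $[\,]$ and its corresponding vertical-tribracket $\langle\,\rangle$, obtained by unwinding the nested-bracket notation and applying the basic relations in Definitions~\ref{def:horizontal} and \ref{s-ternary operation} together with Lemmas~\ref{lem:tribrackets1} and \ref{lem:tribrackets2}. The plan is to prove (1)--(4) more or less in the given order, since each later part leans on the earlier ones. For each part I would induct on the length parameter ($n$, or $m$), peeling off the outermost bracket to expose a shorter instance of the same identity.

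\textbf{Parts (1) and (2).} Part (1) is essentially a definitional reassociation: both sides, read from the inside out, produce the same nested chain $\langle a_{n-2},a_{n-1},a_n\rangle$, then $\langle a_{n-3},a_{n-2},-\rangle$, and so on; the claim is just that inserting the bracket $\blangle a_i,\dots,a_n\brangle$ at position $i$ and continuing outward gives back the full chain. I would make this precise by induction on $n-i$, with the base case $i=n-2$ being the trivial identity $\blangle a_i,a_{i+1},a_{i+2}\brangle=\langle a_i,a_{i+1},\langle a_i,a_{i+1},a_{i+2}\rangle\rangle$... wait, actually the base case is $\blangle a_i,\dots,a_n\brangle$ with two inner slots, which matches directly; the inductive step strips the outermost $\langle a_0,a_1,-\rangle$. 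Part (2) says the first ``step'' in the chain may be replaced: the innermost-but-relevant computation $\langle a_0,a_1,\cdot\rangle$ can be rewritten using $\langle a_0,\langle a_0,a_1,a_2\rangle,a_2,\cdot\rangle$, which is exactly the content of axiom ($\mathcal{V}$2)-(i) of Definition~\ref{s-ternary operation} applied at the two innermost slots, after using (1) to isolate the relevant sub-bracket. So (2) reduces to a single application of ($\mathcal{V}$2)-(i).

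\textbf{Part (3).} This is the key ``inverse'' relation: feeding the output of the nested horizontal bracket $\blsq a_0,\dots,a_n;b\brsq$ back through the nested vertical bracket recovers $b$. I expect this to be the main obstacle, since it interleaves both operations at every level. The strategy is induction on $n$: for $n=1$, $\blangle a_0,a_1,[a_0,a_1,b]\brangle=\langle a_0,a_1,[a_0,a_1,b]\rangle=b$ by Lemma~\ref{lem:tribrackets1}(1). For the inductive step, write $\blsq a_0,\dots,a_n;b\brsq=[a_{n-1},a_n,B]$ where $B=\blsq a_0,\dots,a_{n-1};b\brsq$, and $\blangle a_0,\dots,a_n,-\brangle=\blangle a_0,\dots,a_{n-1},\langle a_{n-1},a_n,-\rangle\brangle$ using (1). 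Plugging in $-=[a_{n-1},a_n,B]$ and applying Lemma~\ref{lem:tribrackets1}(1) collapses $\langle a_{n-1},a_n,[a_{n-1},a_n,B]\rangle$ to $B$, leaving $\blangle a_0,\dots,a_{n-1},B\brangle=b$ by the inductive hypothesis. The care needed is in matching the index bookkeeping of the nested brackets; this is routine but fiddly.

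\textbf{Part (4).} The general formula follows by induction on $m$. The base case $m=i$ (or $m=i+1$, matching the ``In particular'' list's $m=n-1$ case) is either vacuous or reduces to (3). For the inductive step, peel the outermost $[a_{m-1},a_m,-]$ off the left-hand side; the inductive hypothesis handles $\blsq a_0,\dots,a_{m-1};\blangle\cdots\brangle\brsq$, and one is left to recombine one more $[a_{m-1},a_m,-]$. The ``In particular'' trichotomy then falls out by specializing $x_j=a_j$, $y_j=a_j$ and tracking which of $m<n-1$, $m=n-1$, $m>n-1$ one is in: when $m<n-1$ the surviving nested vertical bracket is $\blangle a_m,\dots,a_n\brangle$; when $m=n-1$ it degenerates via Lemma~\ref{lem:tribrackets1}(2) to just $a_n$; when $m>n-1$ the extra horizontal brackets $[a_{n-1},a_n,-],\dots,[a_{m-1},a_m,-]$ pile up, giving $\blsq a_{n-1},a_n,\dots,a_m;a_n\brsq$. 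Throughout, the only tools are Lemmas~\ref{lem:tribrackets1} and \ref{lem:tribrackets2} and axioms ($\mathcal{H}$2), ($\mathcal{V}$2); no new ideas are needed beyond careful induction, with part (3) being the conceptual heart.
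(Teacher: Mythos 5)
Your outline is correct, and for parts (1)--(3) it coincides with the paper's proof: (1) is definitional, (2) is one application of ($\mathcal{V}$2)-(i) after using (1) to expose the top of the nest (note it is applied at the \emph{outermost} two levels of the nested bracket, i.e.\ to the first three arguments $a_0,a_1,a_2$, not the innermost slots), and your induction for (3) --- peel $\blsq a_0,\dots,a_n;b\brsq=[a_{n-1},a_n,B]$, cancel against the innermost $\langle a_{n-1},a_n,-\rangle$ via Lemma~\ref{lem:tribrackets1}(1), invoke the inductive hypothesis --- is exactly the paper's argument. For (4) you take a genuinely different induction: you peel the \emph{outermost} square bracket $[x_{m-1},x_m,-]$, which makes the inductive step on $m$ trivial and pushes all the content into the base case $m=i+1$. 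The paper instead fixes $m$ and strips the common prefix from the \emph{inside}, cancelling the innermost square bracket $[a_k,a_{k+1},-]$ against the outermost angle bracket $\langle a_k,a_{k+1},-\rangle$ via Lemma~\ref{lem:tribrackets1}(2) (together with Lemma~\ref{lem:tribrackets2}(2) in the general $x\ne y$ situation), repeated $i$ times. Both routes reduce to the same elementary cancellation; yours is arguably cleaner in its inductive step but concentrates the work in one place.

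The one point you should repair: your base case for (4) does not reduce to (3) as stated. Part (3) is the identity $\blangle \vec a,\blsq \vec a;b\brsq\brangle=b$ (angle after square), whereas your base case needs the reverse composition $\blsq a_0,\dots,a_i;\blangle a_0,\dots,a_i,c\brangle\brsq=c$ (square after angle). These are the left- and right-inverse statements for the map $b\mapsto\blsq\vec a;b\brsq$; both hold because the elementary maps $b\mapsto[a_{j-1},a_j,b]$ and $c\mapsto\langle a_{j-1},a_j,c\rangle$ are mutually inverse by Lemma~\ref{lem:tribrackets1}, but the square-after-angle version requires its own (one-line, parallel) induction, cancelling from the inside exactly as in (3). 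Once you add that, together with part (1) to rewrite $\blangle a_0,\dots,a_i,y_{i+1},\dots,y_n\brangle$ as $\blangle a_0,\dots,a_i,\blangle a_i,y_{i+1},\dots,y_n\brangle\brangle$ (which needs the hypothesis $i\le n-2$), your argument for (4) closes up.
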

\begin{proof}
(\ref{formula1-1}) This  follows easily from  the definition of the bold angle bracket. 

(\ref{formula1-2}) We have 
\[
\begin{array}{l}
\blangle a_0, a_1, \ldots , a_n \brangle \\[5pt]
=\Big\langle a_0, a_1, \big\langle a_1, a_2,  \blangle a_2,  \ldots , a_n \brangle \big\rangle \Big\rangle \\[5pt]
=\Big\langle a_0, \langle a_0 , a_1, a_2\rangle , \big\langle\langle a_0 , a_1, a_2\rangle, a_2,  \blangle a_2,  \ldots , a_n \brangle \big\rangle \Big\rangle \\[5pt]
=\blangle a_0, \langle a_0 , a_1, a_2\rangle , a_2 ,   \blangle a_2,  \ldots , a_n \brangle  \brangle \\[5pt]
= \blangle a_0, \langle  a_0, a_1, a_2 \rangle, a_2, a_3,\ldots , a_n \brangle,
\end{array}
\]
where the first and third equalities follow from the definition of the bold angle bracket, the second equality follows from (i) of ($\mathcal{V}2$) of Definition~\ref{s-ternary operation}, and the last equality follows from (\ref{formula1-1})  of Lemma~\ref{lem:tribracketfomula}.

(\ref{formula1-3})  We prove this formula by induction for $n\geq 1$.
When $n=1$, we have 
$\blangle a_0, a_1 ,\blsq a_0, a_1; b \brsq\brangle=\big\langle  a_0, a_1 ,[ a_0, a_1, b ] \big\rangle =b $ by (1) of Lemma~\ref{lem:tribrackets1}. 
Assuming the formula holds for some $n\geq 1$, we have   
$$
\begin{array}{l}
\blangle a_0, a_1,\ldots , a_{n+1} ,\blsq a_0, a_1,\ldots , a_{n+1}; b \brsq\brangle \\[5pt]
=\blangle a_0, a_1, \ldots , a_{n},  \bigg \langle a_{n}, a_{n+1}, \Big [  a_{n}, a_{n+1},  \blsq a_0, a_1,\ldots , a_{n}; b \brsq \Big ] \bigg\rangle \brangle\\[5pt] 
=\blangle a_0, a_1,\ldots , a_{n} ,\blsq a_0, a_1,\ldots , a_{n}; b \brsq\brangle \\[5pt]
=b,
\end{array} 
$$
where the first equality follows from the definitions, the second equality follows from (1) of Lemma~\ref{lem:tribrackets1}, and the third equality follows from the assumption.

(\ref{formula1-4})
We show the first equality only, and we leave the proof of the other formulas to the reader.  

For $k<i$, we have
$$\begin{array}{l}
\blsq a_k, a_{k+1},\ldots , a_{i}, x_{i+1}, \ldots ,x_{m} ; \blangle a_k, a_{k+1},\ldots , a_i, y_{i+1}, \ldots , y_n \brangle \brsq \\[7pt]
=\Bigg[ x_{m-1} , x_m,\bigg[ \cdots \Big[ a_{k+1},  \bullet_{k+2},  \big[a_k, a_{k+1},  \blangle a_k, a_{k+1},\ldots , a_i, y_{i+1}, \ldots , y_n \brangle  \big] \Big] \cdots \bigg] \Bigg]\\[11pt]
=\Bigg[ x_{m-1} , x_m,\bigg[ \cdots   \Big[ a_{k+1},  \bullet_{k+2}, \big[a_k, a_{k+1},  \big\langle a_k, a_{k+1}, \blangle  a_{k+1},\ldots , a_i, y_{i+1}, \ldots , y_n \brangle \big\rangle  \big] \Big] \cdots \bigg] \Bigg]\\[11pt]
=\Bigg[ x_{m-1} , x_m,\bigg[ \cdots   \Big[ a_{k+1},  \bullet_{k+2}, \blangle  a_{k+1}, \ldots , a_i, y_{i+1}, \ldots , y_n \brangle  \Big]  \cdots \bigg] \Bigg]\\[11pt]
=\blsq a_{k+1},\ldots , a_{i}, x_{i+1}, \ldots ,x_{m} ; \blangle  a_{k+1},\ldots , a_i, y_{i+1}, \ldots , y_n \brangle \brsq , 
\end{array}
$$
where the first, second and fourth equalities follow from the definitions, and for the third equality, we applied (2) of Lemma~\ref{lem:tribrackets1} for the innermost square bracket and the outermost angle bracket, 
and where $\bullet=\left\{ 
\begin{array}{ll}
a & (k+2\leq i),\\
x & (\mbox{otherwise}).
\end{array}
\right.
$
By repeating the above operation, we have 
$$\begin{array}{l}
\blsq a_0, a_1,\ldots , a_{i}, x_{i+1}, \ldots ,x_{m} ; \blangle a_0, a_1,\ldots , a_i, y_{i+1}, \ldots , y_n \brangle \brsq \\[5pt]
\blsq a_1,\ldots , a_{i}, x_{i+1}, \ldots ,x_{m} ; \blangle a_1,\ldots , a_i, y_{i+1}, \ldots , y_n \brangle \brsq \\[5pt]
=\cdots =\blsq a_{i},x_{i+1}, \ldots , x_{m} ; \blangle a_i, y_{i+1}, \ldots , y_n \brangle \brsq. \\[5pt]
\end{array}
$$

\end{proof}

For a positive integer $n$ and for $a(=a_0), a_1, \ldots , a_n, b_1, \ldots , b_n \in X$, we set $z_i~(i\in \{0,1,\ldots ,n \})$ by 
\[
\left\{
\begin{array}{ll}
z_0=a,\\
z_1=b_1,\\
z_i=[z_{i-2}, z_{i-1}, z_{i-1}|_{b_{i-1} \mapsto b_i}] & (i\in\{2,\ldots, n \}), 
\end{array}
\right.
\]
we set   $w_i~(i\in \{1,\ldots ,n \})$ by 
\[
\left\{
\begin{array}{ll}
w_1=a_1,\\
w_2=\langle a,a_1,a_2\rangle,\\
w_i= w_{i-1} |_{a_{i-1} \mapsto \langle a_{i-2}, a_{i-1}, a_i \rangle}& (i\in \{3,\ldots , n\}),
\end{array}
\right.
\]
and we set $y_{(i,j)}~(i\in \{0,\cdots ,n-1\}, j\in \{1,\ldots ,n-1\})$ by 
\[
y_{(i,j)} = \left\{
\begin{array}{ll}
\langle a_{j-1},  a_j,  y_{(i,j+1)}\rangle  & (j<i),\\[5pt]
\langle a_{j-1},  a_j,  a_{j+1} \rangle & (j=i, i+1),\\[5pt]
\langle y_{(i,j-1)},  a_j,  a_{j+1} \rangle & (j>i+1).
\end{array}
\right.
\]
We then have the following lemmas.
\begin{lemma}\label{lem:zw}
\begin{enumerate}
\renewcommand{\labelenumi}{(\arabic{enumi})}
\item \label{formula2-1} When  $i\geq 2$, $z_i=\blsq z_0, z_1, \ldots , z_{i-1}; b_i \brsq$.
\item \label{formula2-2} When $j\geq 2$, $w_j=\blangle  a, a_1, \ldots , a_j \brangle$.
\end{enumerate}
\end{lemma}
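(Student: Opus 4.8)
The plan is to prove both identities by induction on the index, with the base cases handed to us directly by the defining recursions. The one point that needs care throughout is the meaning of the substitution symbol $x|_{a\mapsto b}$: I read it as simultaneous replacement of the letter $a$ by $b$ in the chosen defining expression of $x$, so the arguments below amount to tracking exactly which letters occur, and where, in the expressions $z_k$, $w_k$, $\blangle\,\cdots\brangle$ and $\blsq\,\cdots\brsq$.

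First I would prove (\ref{formula2-2}) by induction on $j\geq 2$. The base case $j=2$ is just the definition, since $w_2=\langle a,a_1,a_2\rangle=\blangle a,a_1,a_2\brangle$. For the step ($j\geq 3$) I start from $w_j=w_{j-1}|_{a_{j-1}\mapsto\langle a_{j-2},a_{j-1},a_j\rangle}$ and insert the inductive hypothesis $w_{j-1}=\blangle a,a_1,\ldots,a_{j-1}\brangle$. In the expression $\blangle a,a_1,\ldots,a_{j-1}\brangle$ the last letter $a_{j-1}$ occurs exactly once, namely as the innermost third entry inside $\langle a_{j-3},a_{j-2},a_{j-1}\rangle$; replacing it by $\langle a_{j-2},a_{j-1},a_j\rangle$ therefore produces precisely $\blangle a,a_1,\ldots,a_{j-2},\langle a_{j-2},a_{j-1},a_j\rangle\brangle$, which, after rewriting $\blangle a_{j-2},a_{j-1},a_j\brangle=\langle a_{j-2},a_{j-1},a_j\rangle$, equals $\blangle a,a_1,\ldots,a_j\brangle$ by the instance $i=j-2$ of (\ref{formula1-1}) of Lemma~\ref{lem:tribracketfomula}. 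This closes the induction.

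For (\ref{formula2-1}) I would first record the auxiliary fact that, for every $k$, the expression $z_k$ involves only the letters $a,b_1,\ldots,b_k$; this is a short induction on $k$ using $z_k=[z_{k-2},z_{k-1},z_{k-1}|_{b_{k-1}\mapsto b_k}]$, where the substitution in the last slot can only trade $b_{k-1}$ for $b_k$. In particular none of $z_0,\ldots,z_{i-2}$ involves $b_{i-1}$. Then I prove (\ref{formula2-1}) by induction on $i\geq 2$: the base case reads $z_2=[z_0,z_1,z_1|_{b_1\mapsto b_2}]=[z_0,z_1,b_2]=\blsq z_0,z_1;b_2\brsq$ because $z_1=b_1$. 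For the step ($i\geq 3$), the inductive hypothesis gives $z_{i-1}=\blsq z_0,\ldots,z_{i-2};b_{i-1}\brsq$; since $z_0,\ldots,z_{i-2}$ do not contain $b_{i-1}$, the unique occurrence of $b_{i-1}$ in this expression is the final ``$b$''-slot, so $z_{i-1}|_{b_{i-1}\mapsto b_i}=\blsq z_0,\ldots,z_{i-2};b_i\brsq$; substituting this into $z_i=[z_{i-2},z_{i-1},z_{i-1}|_{b_{i-1}\mapsto b_i}]$ and comparing with the definition of the augmented bold square bracket (whose outermost layer is exactly $[z_{i-2},z_{i-1},\,\cdot\,]$) gives $z_i=\blsq z_0,\ldots,z_{i-1};b_i\brsq$.

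I expect the main obstacle to be purely expository: being scrupulous about the substitution operator and about the positions at which each letter appears in the bold-bracket expressions, since a careless treatment of $x|_{a\mapsto b}$ would make the inductive steps look circular (the replacement term $\langle a_{j-2},a_{j-1},a_j\rangle$ itself contains $a_{j-1}$, and the recursion for $z_k$ substitutes into an expression that is then fed back in). Once this bookkeeping is fixed, both inductions are routine unfoldings of the definitions together with (\ref{formula1-1}) of Lemma~\ref{lem:tribracketfomula}.
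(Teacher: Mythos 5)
Your proof is correct and follows essentially the same route as the paper's: induction on the index, unfolding the defining recursions, together with part (\ref{formula1-1}) of Lemma~\ref{lem:tribracketfomula} for the bold angle bracket. The only difference is that you make explicit the bookkeeping the paper leaves implicit --- namely that $b_{i-1}$ (resp.\ $a_{j-1}$) occurs only in the final slot of the expression being substituted into, so that $x|_{a\mapsto b}$ acts where intended --- which is a welcome clarification rather than a divergence.
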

\begin{proof}
(\ref{formula2-1}) We show this formula by induction for $i\in \{2,\ldots , n\}$. 
When $i=2$, we have 
$z_2= [z_0, z_1, z_1|_{b_1 \mapsto b_2}] = [z_0,z_1, b_2] = \blsq z_0,z_1; b_2 \brsq $. 
Assuming the formula holds  for some $i \geq 2$,
we have 
\[
\begin{array}{ll}
z_{i+1} &= \big[z_{i-1}, z_i, z_i|_{b_{i} \mapsto b_{i+1}}\big]\\[5pt]
& =\big[z_{i-1}, z_i, \blsq z_0, z_1, \ldots , z_{i-1}; b_i \brsq \Big|_{b_{i} \mapsto b_{i+1}}\big] \\[5pt]
& =\big[z_{i-1}, z_i, \blsq z_0, z_1, \ldots , z_{i-1}; b_{i+1} \brsq \big] \\[5pt]
& =\blsq z_0, z_1, \ldots , z_{i-1}, z_i ; b_{i+1} \brsq,
\end{array}
\]
where the first and last equalities follow from the definitions, and the second quality follows from the assumption. 

(\ref{formula2-2}) This  follows easily from  the definition of $w_j$. 
\end{proof}
\begin{lemma}\label{lem:y}
\begin{enumerate}
\renewcommand{\labelenumi}{(\arabic{enumi})}

\item \label{formula3-5} When  $j\leq i-1$, 
$$\big[ a_{j-1}, a_{j}, y_{(i-1,j)} \big] =
\left\{ \begin{array}{ll}
 y_{(i-1, j+1)} & (j<i-1), \\[3pt]
a_i & (j=i-1).
\end{array}
\right.
 $$

\item \label{formula3-2} When  $j\leq i-1$, 
$$\blangle a_{j-1}, a_{j},\ldots , a_{i} \brangle=y_{(i-1,j)}. $$
In particular, $w_i = y_{(i-1, 1)}$.

\item \label{formula3-1} When  $j\geq i+1$, 
$$\blangle a_{i-1}, a_{i},\ldots , a_{j} \brangle = \blangle a_{i-1}, y_{(i-1,i)}, y_{(i-1,i+1)},\ldots , y_{(i-1,j-1)} \brangle . $$
\item \label{formula3-3} When  $j\leq i-1$, 
$$ \Big[ a, w_j, w_i \Big] =\left\{
\begin{array}{ll}
\blangle y_{(i-1,1)}, y_{(i-1,2)},  \ldots , y_{(i-1,j+1)} \brangle & (j<i-1),\\[5pt]
\blangle y_{(i-1,1)}, y_{(i-1,2)},  \ldots , y_{(i-1,i-1)}, a_i \brangle & (j=i-1).
\end{array}
\right. $$

\item \label{formula3-4} When  $j\geq i+1$, 
$$ \Big[ a, w_i, w_j \Big] =
\blangle y_{(i-1,1)}, y_{(i-1,2)},  \ldots , y_{(i-1,i-1)}, a_i , a_{i+1}, \ldots , a_j \brangle.$$

\end{enumerate}
\end{lemma}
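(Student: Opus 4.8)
The plan is to establish the five identities in the order stated, in each case reducing the claim to the cancellation laws $c=\langle a,b,[a,b,c]\rangle$ and $d=[a,b,\langle a,b,d\rangle]$ of Lemma~\ref{lem:tribrackets1}, the bracket manipulations of Lemma~\ref{lem:tribracketfomula}, the relations $w_j=\blangle a,a_1,\ldots,a_j\brangle$ and $z_i=\blsq z_0,\ldots,z_{i-1};b_i\brsq$ of Lemma~\ref{lem:zw}, and the defining recursions of $y_{(i,j)}$ and $w_i$; no identity of $[\,]$ or $\langle\,\rangle$ beyond these and the axioms ($\mathcal{V}$2)(i),(ii) of Definition~\ref{s-ternary operation} will be needed.

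Part~(1) is immediate from the recursion: when $j<i-1$ one has $y_{(i-1,j)}=\langle a_{j-1},a_j,y_{(i-1,j+1)}\rangle$ and when $j=i-1$ one has $y_{(i-1,i-1)}=\langle a_{i-2},a_{i-1},a_i\rangle$, so in either case Lemma~\ref{lem:tribrackets1}(2), applied with first two slots $a_{j-1},a_j$, returns the asserted value. Part~(2) I would prove by downward induction on $j$ from $j=i-1$; the base case is the definition of $y_{(i-1,i-1)}$, and for $j<i-1$ the splitting formula Lemma~\ref{lem:tribracketfomula}(1) gives $\blangle a_{j-1},a_j,\ldots,a_i\brangle=\langle a_{j-1},a_j,\blangle a_j,\ldots,a_i\brangle\rangle$, which by the inductive hypothesis equals $\langle a_{j-1},a_j,y_{(i-1,j+1)}\rangle=y_{(i-1,j)}$; taking $j=1$ and using Lemma~\ref{lem:zw}(2) then gives $w_i=\blangle a_0,\ldots,a_i\brangle=y_{(i-1,1)}$. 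Part~(3) I would prove by induction on $j\ge i+1$, converting the entries $a_i,a_{i+1},\ldots$ into $y_{(i-1,i)},y_{(i-1,i+1)},\ldots$ one at a time: at a typical step one splits the tail of $\blangle\ldots,y_{(i-1,k-1)},a_k,a_{k+1},\ldots,a_j\brangle$ off at $y_{(i-1,k-1)}$ by Lemma~\ref{lem:tribracketfomula}(1), applies Lemma~\ref{lem:tribracketfomula}(2) to the inner bracket to replace $a_k,a_{k+1}$ by $\langle y_{(i-1,k-1)},a_k,a_{k+1}\rangle$, recognizes this as $y_{(i-1,k)}$ via the $j>i+1$ clause of the recursion, and reassembles.

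Parts~(4) and~(5) are the substantive cases, and I would treat both by the same template. Write $w_i=\blangle a_0,\ldots,a_i\brangle$ and $w_j=\blangle a_0,\ldots,a_j\brangle$ via Lemma~\ref{lem:zw}(2), and recall $w_i=y_{(i-1,1)}$ from Part~(2). Let $\Xi$ be the bold angle bracket of $y$'s appearing on the right-hand side of the identity in question. The aim is to put the longer of $w_i,w_j$ into the form $\langle a,w_{\min},\Xi\rangle$, where $w_{\min}$ is the shorter of the two; Lemma~\ref{lem:tribrackets1}(2) then yields $[a,w_{\min},w_{\max}]=\Xi$ at one stroke. For Part~(5) ($j\ge i+1$) I would first prove, by $i-1$ successive applications of ($\mathcal{V}$2)(i) interleaved with Lemma~\ref{lem:tribracketfomula}(1), the rebalancing identity $\blangle a_0,a_1,\ldots,a_j\brangle=\blangle a_0,y_{(i-1,1)},y_{(i-1,2)},\ldots,y_{(i-1,i-1)},a_i,\ldots,a_j\brangle$, and then split the right-hand side after $y_{(i-1,1)}=w_i$ to exhibit $w_j$ as $\langle a,w_i,\Xi\rangle$. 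Part~(4) ($j\le i-1$) is the mirror image: ($\mathcal{V}$2)(ii) rewrites $\Xi$ into a left-nested form headed by $\langle\langle a_0,a_1,a_2\rangle,a_2,\ldots\rangle$ that matches the head of $w_i$, after which ($\mathcal{V}$2)(i) rebuilds $w_i$ as $\langle a,w_j,\Xi\rangle$ and cancellation finishes. The main obstacle throughout Parts~(3)--(5) should be purely bookkeeping: keeping track of which nested sub-bracket each instance of Lemma~\ref{lem:tribracketfomula}(1),(2) or ($\mathcal{V}$2) acts upon, and checking at every step that the element introduced is exactly the predicted $y_{(i-1,k)}$ (the degenerate short-bracket cases being handled by the obvious conventions for bold brackets of fewer than three entries); once the innermost step is pinned down, the inductions are routine.
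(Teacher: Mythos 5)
Your proposal is correct, and for parts (1)--(3) it reproduces the paper's argument: part (1) is the same one-line application of the cancellation $d=[a,b,\langle a,b,d\rangle]$ from Lemma~\ref{lem:tribrackets1}, part (2) merely spells out the induction the paper dismisses as immediate from the definition of $y_{(i,j)}$, and part (3) is exactly the paper's entry-by-entry conversion via Lemma~\ref{lem:tribracketfomula}(1),(2). For parts (4) and (5), however, you take a genuinely different route. The paper computes $[a,w_j,w_i]$ (resp.\ $[a,w_i,w_j]$) head-on, peeling off one level of nesting at a time by repeated use of Lemma~\ref{lem:tribrackets2}(2) so that the square bracket migrates inward through the bold angle bracket until it is absorbed; you instead establish a single closed-form identity $w_{\max}=\langle a,w_{\min},\Xi\rangle$ and cancel once with Lemma~\ref{lem:tribrackets1}(2). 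Your rebalancing inductions do close up: for part (5) the identity $\blangle a_0,\ldots,a_j\brangle=\blangle a_0,y_{(i-1,1)},\ldots,y_{(i-1,i-1)},a_i,\ldots,a_j\brangle$ follows by an inside-out induction applying Lemma~\ref{lem:tribracketfomula}(2) at positions $k=i-2,\ldots,0$, and for part (4) the statement to induct on (downward in $k$) is $y_{(i-1,k+1)}=\big\langle a_k,\blangle a_k,\ldots,a_j\brangle,\blangle y_{(i-1,k+1)},\ldots,y_{(i-1,j+1)}\brangle\big\rangle$, each step consuming one instance of $(\mathcal{V}2)$-(i) and one of $(\mathcal{V}2)$-(ii) together with the inductive hypothesis (with the endpoint replaced by $a_i$ when $j=i-1$). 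What your organization buys is that all the work is concentrated in identities involving only the vertical tribracket, the horizontal bracket entering exactly once via cancellation; this makes the roles of the two axioms $(\mathcal{V}2)$-(i),(ii) transparent and bypasses Lemma~\ref{lem:tribrackets2} entirely for these parts. What it costs is that the object of the induction in part (4) is not the lemma itself but the auxiliary family above, which your sketch only gestures at; the overall bookkeeping is comparable to the paper's.
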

\begin{proof}
(\ref{formula3-5}) This follows from (2) of Lemma~\ref{lem:tribrackets1} as 
$$
\big[a_{j-1}, a_{j} , y_{(i-1,j)}   \big] =\big[a_{j-1}, a_{j} , \langle a_{j-1}, a_{j}, y_{(i-1,j+1)} \rangle \big]
=y_{(i-1,j+1)}.
$$

(\ref{formula3-2}) This  follows easily from  the definition of $y_{(i,j)}$.

(\ref{formula3-1}) 
For $i\leq k \leq j-3$,
\[
\begin{array}{l}
\blangle a_{i-1}, y_{(i-1, i)}, \ldots, y_{(i-1, k)},  a_{k+1}, \ldots , a_{j} \brangle \\[5pt]
=  \blangle a_{i-1}, y_{(i-1, i)}, \ldots, y_{(i-1, k)}, \blangle   y_{(i-1, k)}, a_{k+1}, \ldots , a_{j} \brangle \brangle\\[5pt]
=  \blangle a_{i-1}, y_{(i-1, i)}, \ldots, y_{(i-1, k)},  \blangle   y_{(i-1, k)}, \langle  y_{(i-1, k)} ,a_{k+1}, a_{k+2} \rangle, a_{k+2}, \ldots , a_{j} \brangle \brangle \\[5pt]
=  \blangle a_{i-1}, y_{(i-1, i)}, \ldots, y_{(i-1, k)},  \blangle   y_{(i-1, k)}, y_{(i-1, k+1)}, a_{k+2}, \ldots , a_{j} \brangle \brangle \\[5pt]
=\blangle a_{i-1}, y_{(i-1, i)}, \ldots, y_{(i-1, k+1)},  a_{k+2}, \ldots , a_{j} \brangle,
\end{array}
\]
where the first and fourth equalities follow from (\ref{formula1-1}) of Lemma~\ref{lem:tribracketfomula}, the second equality follows from  (\ref{formula1-2}) of Lemma~\ref{lem:tribracketfomula}, and 
the third equality  follows from the definition of $y_{(i,j)}$.

By repeating the above operation, we have   
\[
\begin{array}{l}
\blangle a_{i-1}, a_{i},\ldots , a_{j} \brangle \\[5pt]
= \blangle a_{i-1}, \langle a_{i-1}, a_{i}, a_{i+1} \rangle , a_{i+1}, \ldots , a_{j} \brangle\\[5pt]
= \blangle a_{i-1}, y_{(i-1, i)}, a_{i+1}, \ldots , a_{j} \brangle \\[5pt]
= \cdots =  \blangle a_{i-1}, y_{(i-1,i)}, y_{(i-1,i+1)},\ldots , y_{(i-1,j-2)}, a_{j-1}, a_j \brangle \\[5pt]
=  \blangle a_{i-1}, y_{(i-1,i)}, y_{(i-1,i+1)},\ldots , y_{(i-1,j-2)}, \langle y_{(i-1,j-2)}, a_{j-1}, a_j \rangle  \brangle \\[5pt]
=  \blangle a_{i-1}, y_{(i-1,i)}, y_{(i-1,i+1)},\ldots , y_{(i-1,j-2)}, y_{(i-1,j-1)}  \brangle , \\[5pt]
\end{array}
\]
where the first equality follows from (\ref{formula1-2}) of Lemma~\ref{lem:tribracketfomula}, the second and last equalities follow from  the definition of $y_{(i,j)}$, and the second equality from the last follows from  (\ref{formula1-1}) of Lemma~\ref{lem:tribracketfomula}.

(\ref{formula3-3}) For $k\leq j-3$, we have 
\[
\begin{array}{l}
\Big[ a_k , \blangle a_k, a_{k+1}, \ldots , a_j\brangle , y_{(i-1,k+1)} \Big]\\[5pt]
=\Big[ a_k , \big\langle a_k, a_{k+1} , \blangle a_{k+1}, a_{k+2}, \ldots , a_j\brangle  \big\rangle , y_{(i-1,k+1)} \Big]\\[5pt]
=\Big \langle y_{(i-1,k+1)} , \big[a_k, a_{k+1} , y_{(i-1,k+1)}   \big] ,  \Big[ a_{k+1} ,\blangle a_{k+1}, a_{k+2}, \ldots , a_j\brangle, \big[a_k, a_{k+1} , y_{(i-1,k+1)}   \big]  \Big]   \Big\rangle \\[5pt]
=\Big \langle y_{(i-1,k+1)} , y_{(i-1,k+2)},  \Big[ a_{k+1} ,\blangle a_{k+1}, a_{k+2}, \ldots , a_j\brangle, y_{(i-1,k+2)}\Big]   \Big\rangle , 
\end{array}
\]
where the first equality follows from the definition of the bold angle bracket, the second equality follows from (2) of  Lemma~\ref{lem:tribrackets2}, and the third equality follows from (\ref{formula3-5}) of Lemma~\ref{lem:y}.

Hence, by repeating the above operation, for $j<i-1$, we have 
\[
\begin{array}{l}
\Big[ a, w_j, w_i \Big] \\[5pt]
= \Big[ a,  \blangle a,  a_1, \ldots , a_j\brangle , y_{(i-1,1)} \Big]\\[5pt]
=\cdots 
= \blangle y_{(i-1,1)} , y_{(i-1,2)},  \ldots , y_{(i-1,j-1)} , \Big[ a_{j-2}, \blangle a_{j-2}, a_{j-1} , a_j \brangle , y_{(i-1,j-1)} \Big]   \brangle  \\[5pt]
= \blangle y_{(i-1,1)} , y_{(i-1,2)},  \ldots , y_{(i-1,j)} , y_{(i-1,j+1) }   \brangle, 
\end{array}
\]
where the first equality follows from (\ref{formula2-2}) of Lemma~\ref{lem:zw} and (2) of Lemma~\ref{lem:y}, and the last equality follows from the definition of the bold angle bracket, (2) of  Lemma~\ref{lem:tribrackets2},    and (\ref{formula3-5}) of Lemma~\ref{lem:y} 
as 
\[
\begin{array}{l}
\Big[ a_{j-2}, \blangle a_{j-2}, a_{j-1} , a_j \brangle , y_{(i-1,j-1)} \Big]  \\[5pt]
=\Big[ a_{j-2}, \big \langle a_{j-2}, a_{j-1} , a_j \big \rangle , y_{(i-1,j-1)} \Big]  \\[5pt]
=  \Big\langle  y_{(i-1,j-1)},  \big[a_{j-2}, a_{j-1}, y_{(i-1,j-1)}\big], \big[a_{j-1}, a_j, [a_{j-2}, a_{j-1}, y_{(i-1,j-1)}] \big] \Big\rangle   \\[5pt]
=  \Big\langle  y_{(i-1,j-1)}, y_{(i-1,j)} , \big[a_{j-1}, a_j, y_{(i-1,j)} \big] \Big\rangle   \\[5pt]
=  \Big\langle  y_{(i-1,j-1)}, y_{(i-1,j)} , y_{(i-1,j+1)} \Big\rangle.
\end{array}
\]
Similarly, for $j=i-1$, we have 
\[
\begin{array}{l}
\Big[ a, w_{i-1}, w_i \Big] \\[5pt]
=\cdots 
= \blangle y_{(i-1,1)} , y_{(i-1,2)},  \ldots , y_{(i-1,i-2)} , \Big[ a_{i-3}, \blangle a_{i-3}, a_{i-2} , a_{i-1} \brangle , y_{(i-1,i-2)} \Big]   \brangle  \\[5pt]
= \blangle y_{(i-1,1)} , y_{(i-1,2)},  \ldots , y_{(i-1,i-2)} ,   \Big\langle  y_{(i-1,i-2)}, y_{(i-1,i-1)} , \big[a_{i-2}, a_{i-1}, y_{(i-1,i-1)} \big] \Big\rangle   \brangle  \\[5pt]
= \blangle y_{(i-1,1)} , y_{(i-1,2)},  \ldots , y_{(i-1,i-1)} , a_i   \brangle,  
\end{array}
\]
where the second equality from the last follows from  (2) of  Lemma~\ref{lem:tribrackets2}, (1) of Lemma~\ref{lem:y}, and the last equality follows from  (\ref{formula3-5}) of Lemma~\ref{lem:y}.

(\ref{formula3-4}) For $k\leq i-3$, we have 
\[
\begin{array}{l}
\Big[ a_k  , y_{(i-1, k+1)} , \blangle a_k, a_{k+1}, \ldots , a_j \brangle  \Big]\\[5pt]
= \Big[ a_k , \big\langle a_k, a_{k+1}, y_{(i-1, k+2)}  \big\rangle  , \blangle a_k, a_{k+1}, \ldots , a_j \brangle   \Big]\\[5pt]
= \Big\langle  y_{(i-1, k+1)} ,  y_{(i-1, k+2)}, \big[a_{k+1}, y_{(i-1,k+2)}, [a_{k}, a_{k+1}, \blangle a_k, a_{k+1}, \ldots , a_j \brangle ]\big] \Big\rangle\\[6pt]
= \bigg\langle  y_{(i-1, k+1)} ,  y_{(i-1, k+2)}, \Big[a_{k+1}, y_{(i-1,k+2)}, \big[a_{k}, a_{k+1},  \langle a_{k}, a_{k+1},  \blangle a_{k+1}, \ldots , a_j \brangle \rangle \big]\Big] \bigg\rangle\\[8pt]

= \blangle  y_{(i-1, k+1)} ,  y_{(i-1, k+2)}, \big[a_{k+1}, y_{(i-1,k+2)}, \blangle a_{k+1}, \ldots , a_j \brangle \big] \brangle,\\[5pt]
\end{array}
\]
where the first and third equalities  follow from the definitions, the second equality follows from (2) of Lemma~\ref{lem:tribrackets2}, the definition of $y_{(i,j)}$ and the fourth equality follows from (2) of Lemma~\ref{lem:tribrackets1}.

Hence, by repeating the above operation, we have 
\[
\begin{array}{l}
\Big[ a, w_i, w_j \Big] \\[5pt]
= \Big[ a, y_{(i-1, 1)} , \blangle a, a_1, \ldots , a_j \brangle  \Big]\\[5pt]
=\cdots= \blangle  y_{(i-1, 1)} ,  \ldots ,y_{(i-1, i-1)}, \big[a_{i-2}, y_{(i-1,i-1)}, \blangle a_{i-2}, \ldots , a_j \brangle \big] \brangle\\[5pt]
= \blangle  y_{(i-1, 1)} ,  \ldots ,y_{(i-1, i-1)}, \Big[a_{i-2}, \langle a_{i-2},a_{i-1}, a_{i} \rangle, \blangle a_{i-2}, \ldots , a_j \brangle \Big] \brangle\\[5pt]
= \blangle  y_{(i-1, 1)} ,  \ldots ,y_{(i-1, i-1)}, \Big\langle y_{(i-1,i-1)}, a_i, \big[a_{i-1}, a_i, [ a_{i-2}, a_{i-1}, \blangle a_{i-2}, \ldots , a_j \brangle ] \big] \Big\rangle \brangle\\[5pt]
= \blangle  y_{(i-1, 1)} ,  \ldots ,y_{(i-1, i-1)}, \Big\langle y_{(i-1,i-1)}, a_i, \big[a_{i-1}, a_i, [ a_{i-2}, a_{i-1}, \langle a_{i-2}, a_{i-1}, \blangle a_{i-1}, \ldots , a_j \brangle\rangle  ] \big] \Big\rangle \brangle\\[5pt]

= \blangle  y_{(i-1, 1)} ,  \ldots ,y_{(i-1, i-1)}, \Big\langle y_{(i-1,i-1)}, a_i, \blangle a_{i}, \ldots , a_j \brangle   \Big\rangle \brangle\\[5pt]
= \blangle  y_{(i-1, 1)} ,  \ldots ,y_{(i-1, i-1)}, a_i, \ldots , a_j\brangle,\\[5pt]
\end{array}
\]
where the first equality follows from (2) of Lemma~\ref{lem:zw} and  (\ref{formula3-2}) of Lemma~\ref{lem:y},
the fifth  and third equalities from the last follow from the definitions, 
the fourth equality from the last follows from (2) of  Lemma~\ref{lem:tribrackets2}, the second equality  from the last follows from (2) of Lemma~\ref{lem:tribrackets1} and the last equality follows from (\ref{formula1-1}) of Lemma~\ref{lem:tribracketfomula}.

\end{proof}

We recall that in Subsection~\ref{subsec:correspondence1}, the homomorphism 
$\varphi_n: C_n^{\rm LB}(X) \to C_{n-1}^{\rm N}(X)$ was defined by 
\[
\varphi_n \Big(\big((a,b_1), (a, b_2), \ldots , (a ,b_n) \big)\Big) = (z_0, z_1, \ldots , z_n)
\]
if $n\geq 1$, and $\varphi_n=0$ otherwise.
The homomorphism  
$\psi_n:  C_{n-1}^{\rm N}(X) \to C_{n}^{\rm LB}(X)$ was defined by
\[
\psi_n \big((a, a_1, a_2, \ldots , a_n)\big) = \big((a,w_1), (a,w_2), \ldots , (a, w_n)\big)
\]
if $n\geq 1$, and $\psi_n=0$ otherwise
\begin{lemma}\label{lem:welldefined}
\begin{itemize}
\item[(1)] $\varphi_n$ is well-defined.
\item[(2)] $\psi_n$ is well-defined.
\end{itemize}
\end{lemma}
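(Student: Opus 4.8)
The plan is to reduce both claims to the single assertion that the prescribed formulas send degenerate generators to degenerate generators. Indeed, $\varphi_n$ and $\psi_n$ are defined on the generators of the free modules $C_n(X)$ and $C_{n-1}^{\rm Nie}(X)$, and since $C_n^{\rm LB}(X)=C_n(X)/D_n(X)$ and $C_{n-1}^{\rm N}(X)=C_{n-1}^{\rm Nie}(X)/D_{n-1}^{\rm Nie}(X)$, these prescriptions descend to the quotients precisely when every degenerate generator is mapped into the corresponding degenerate submodule. For $n\le 1$ the degenerate submodules vanish, so there is nothing to check and we may assume $n\ge 2$. Thus it suffices to prove: (1) if $b_i=b_{i+1}$ for some $i\in\{1,\ldots,n-1\}$, then $(z_0,z_1,\ldots,z_n)\in D_{n-1}^{\rm Nie}(X)$; and (2) if $\langle a_{j-1},a_j,a_{j+1}\rangle=a_j$ for some $j\in\{1,\ldots,n-1\}$ (writing $a_0:=a$), then $\big((a,w_1),\ldots,(a,w_n)\big)\in D_n(X)$.

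For (1) I would fix such an $i$ and inspect $z_{i+1}=[z_{i-1},z_i,z_i|_{b_i\mapsto b_{i+1}}]$. By (\ref{formula2-1}) of Lemma~\ref{lem:zw}, $z_i=\blsq z_0,\ldots,z_{i-1};b_i\brsq$; here $b_i$ enters only as the innermost (seed) slot, while $z_0,\ldots,z_{i-1}$ are built from $a,b_1,\ldots,b_{i-1}$ alone, so $z_i|_{b_i\mapsto b_{i+1}}=\blsq z_0,\ldots,z_{i-1};b_{i+1}\brsq$, and this equals $z_i$ since $b_i=b_{i+1}$. Hence $z_{i+1}=[z_{i-1},z_i,z_i]$, and part~(1) of Lemma~\ref{lem:tribrackets1} (with $(a,b,c)=(z_{i-1},z_i,z_i)$) gives $\langle z_{i-1},z_i,z_{i+1}\rangle=\langle z_{i-1},z_i,[z_{i-1},z_i,z_i]\rangle=z_i$. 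So $(z_0,\ldots,z_n)$ is degenerate at the index $j=i\in\{1,\ldots,n-1\}$, hence lies in $D_{n-1}^{\rm Nie}(X)$.

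For (2) I would fix such a $j$ and show $w_j=w_{j+1}$, which exhibits $\big((a,w_1),\ldots,(a,w_n)\big)$ as an element of $D_n(X)$. If $j=1$ this is immediate, since $w_2=\langle a_0,a_1,a_2\rangle=a_1=w_1$ by hypothesis. If $j\ge 2$, then by (\ref{formula2-2}) of Lemma~\ref{lem:zw} we have $w_j=\blangle a_0,\ldots,a_j\brangle$ and $w_{j+1}=\blangle a_0,\ldots,a_{j+1}\brangle$; peeling off the last three entries via (\ref{formula1-1}) of Lemma~\ref{lem:tribracketfomula} gives $w_{j+1}=\blangle a_0,\ldots,a_{j-1},\blangle a_{j-1},a_j,a_{j+1}\brangle\brangle$, and since $\blangle a_{j-1},a_j,a_{j+1}\brangle=\langle a_{j-1},a_j,a_{j+1}\rangle=a_j$ by hypothesis, this reduces to $\blangle a_0,\ldots,a_{j-1},a_j\brangle=w_j$.

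All the substantive identities are already supplied by Lemmas~\ref{lem:tribrackets1}, \ref{lem:tribracketfomula} and \ref{lem:zw}, so these verifications are short. The one point demanding care — and the only real obstacle I anticipate — is the bookkeeping with the substitution notation $x|_{a\mapsto b}$ in part~(1): one has to be certain that $b_i$ appears in $z_i$ exactly as the seed of $\blsq\,\cdot\,\brsq$ and nowhere among $z_0,\ldots,z_{i-1}$, so that the substitution has the stated effect, together with keeping the index ranges aligned so that the degeneracy produced lands in the correct submodule. No algebraic identity beyond those cited should be needed.
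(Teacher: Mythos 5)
Your proposal is correct and follows essentially the same route as the paper: reduce well-definedness to showing that degenerate generators land in the degenerate submodules, then observe for (1) that $b_i=b_{i+1}$ forces $z_{i+1}=[z_{i-1},z_i,z_i]$ so that $\langle z_{i-1},z_i,z_{i+1}\rangle=z_i$ by Lemma~\ref{lem:tribrackets1}(1), and for (2) that $\langle a_{j-1},a_j,a_{j+1}\rangle=a_j$ forces $w_{j+1}=w_j$. Your detour through Lemma~\ref{lem:zw} is harmless but unnecessary (the paper argues directly from the recursive definitions, since the substitution $b_i\mapsto b_{i+1}$ is vacuous when $b_i=b_{i+1}$, and formally Lemma~\ref{lem:zw}(1) only covers $i\ge 2$), and your explicit treatment of the $j=1$ case in (2) fills a small gap the paper leaves implicit.
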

\begin{proof}
(1) We regard $\varphi_n$ as a homomorphism from $C_n(X)$ to $C_{n-1}^{\rm Nie}(X)$.
It suffices to show that $\varphi_{n} \big(D_{n}(X)\big) \subset D_{n-1}^{\rm Nie}(X)$.
 
For $\big((a,b_1), (a,b_2), \ldots , (a,b_n)\big) \in \bigcup_{a\in X} (\{a\} \times X)^n $, suppose $b_{i}=b_{i+1}$ for some $i$ ($i \in \{1, 2, \cdots n\} $).
We then have 
\[
\begin{array}{ll}
&\varphi_{n}\Big(\big((a,b_1), (a,b_2), \ldots , (a,b_n)\big)\Big)\\[5pt]
&=\big( z_0, z_1, \cdots, z_{i-1}, z_i, [z_{i-1}, z_i, z_i |_{b_{i} \mapsto b_{i+1}}], z_{i+2},  \cdots,   z_n \big)  \\[5pt]
&=\big( z_0, z_1, \cdots, z_{i-1}, z_i, [z_{i-1}, z_i, z_i ],  \cdots,   z_n \big) . 
\end{array}
\]
Here we have $\langle z_{i-1}, z_i, [z_{i-1}, z_i, z_i ] \rangle=z_i$ by (1) of Lemma~\ref{lem:tribrackets1}, which implies that $\varphi_{n} \Big(\big((a,b_1), (a,b_2), \ldots , (a,b_n)\big)\Big) \in D_{n-1}^{\rm Nie}(X)$, i.e. $\varphi_{n} \big(D_{n}(X)\big) \subset D_{n-1}^{\rm Nie}(X)$.

(2) We regard $\psi_n$ as a homomorphism from $C_{n-1}^{\rm Nie}(X)$ to $C_n(X)$.
It suffices to show that $\psi_{n} \big(D_{n-1}^{\rm Nie}(X)\big) \subset D_{n}(X)$.

For $(a, a_1, a_2, \ldots , a_n)\in X^{n+1}$, suppose $\langle a_{j-1}, a_j, a_{j+1}\rangle =a_j$ for some $j$ $(j \in \{ 2, 3,  \cdots, n-1\})$.
We then have
\[
\begin{array}{ll}
&\psi_n \big((a, a_1, a_2, \ldots , a_n)\big)\\[5pt]
&= \big((a,w_1), (a,w_2), \ldots ,(a,w_j), (a,w_{j+1}=w_j) , (a, w_{j+2}),  \ldots , (a, w_n)\big)
\end{array}
\]
since $w_{j+1}=w_{j}|_{a_{j} \mapsto \langle a_{j-1}, a_j, a_{j+1}\rangle } =w_{j}|_{a_{j} \mapsto a_j }=w_{j}$. Hence there is a pair of neighboring components which are equal. It follows that $\psi_n \big((a, a_1, a_2, \ldots , a_n)\big) \in D_{n}(X)$,  i.e. $\psi_{n} \big(D_{n-1}^{\rm Nie}(X) \big)  \subset D_{n}(X)$.
\end{proof}

\begin{lemma}\label{lem:inverses}
$\varphi_n$ and $\psi_n$ are the inverses of each other.
\end{lemma}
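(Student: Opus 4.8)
The plan is to verify directly that $\psi_n\circ\varphi_n=\mathrm{id}$ on $C_n^{\rm LB}(X)$ and $\varphi_n\circ\psi_n=\mathrm{id}$ on $C_{n-1}^{\rm N}(X)$. Since both homomorphisms factor through the degenerate submodules by Lemma~\ref{lem:welldefined} and the relevant modules are freely generated, it suffices to evaluate the two composites on generators, which I will do at the level of the free modules $C_n(X)$ and $C_{n-1}^{\rm Nie}(X)$; the cases $n\le 0$ are trivial since all four modules vanish, so assume $n\ge 1$. A preliminary reduction: in every computation below I replace the recursive definitions of the $z_i$ and the $w_j$ by their closed forms $z_i=\blsq z_0,z_1,\ldots,z_{i-1};b_i\brsq$ (for $i\ge 2$) and $w_j=\blangle a,a_1,\ldots,a_j\brangle$ (for $j\ge 2$) coming from Lemma~\ref{lem:zw}; this also sidesteps any ambiguity in the formal substitution notation used to define $\varphi_n$, since the $\blsq\ \brsq$-expression for $z_i$ involves only $z_0,\ldots,z_{i-1}$ and $b_i$ directly.

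For $\psi_n\circ\varphi_n$, take a generator $\big((a,b_1),\ldots,(a,b_n)\big)$ of $C_n(X)$. Then $\varphi_n$ produces $(z_0,\ldots,z_n)$ with $z_0=a$ and $z_1=b_1$, and applying $\psi_n$ to this tuple yields $\big((a,w_1),\ldots,(a,w_n)\big)$ with $w_1=z_1=b_1$ and, by (2) of Lemma~\ref{lem:zw}, $w_j=\blangle z_0,z_1,\ldots,z_j\brangle$ for $j\ge 2$. For such $j$, (1) of Lemma~\ref{lem:zw} gives $z_j=\blsq z_0,\ldots,z_{j-1};b_j\brsq$, so substituting this into the last entry and applying (3) of Lemma~\ref{lem:tribracketfomula} (with $j-1\ge 1$ in the role of its $n$) gives $w_j=\blangle z_0,\ldots,z_{j-1},\blsq z_0,\ldots,z_{j-1};b_j\brsq\brangle=b_j$. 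Hence $\psi_n\big(\varphi_n\big((a,b_1),\ldots,(a,b_n)\big)\big)=\big((a,b_1),\ldots,(a,b_n)\big)$.

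For $\varphi_n\circ\psi_n$, take a generator $(a,a_1,\ldots,a_n)$ of $C_{n-1}^{\rm Nie}(X)$. Then $\psi_n$ produces $\big((a,w_1),\ldots,(a,w_n)\big)$, and feeding this into $\varphi_n$ yields $(z_0,\ldots,z_n)$ with $z_0=a$ and $z_1=w_1=a_1$. I claim $z_i=a_i$ for all $i$, which I prove by induction, the cases $i=0,1$ being immediate. For $i\ge 2$, (1) of Lemma~\ref{lem:zw} gives $z_i=\blsq z_0,\ldots,z_{i-1};w_i\brsq$; the inductive hypothesis gives $z_k=a_k$ for $k<i$, and (2) of Lemma~\ref{lem:zw} gives $w_i=\blangle a,a_1,\ldots,a_i\brangle$, so $z_i=\blsq a,a_1,\ldots,a_{i-1};\blangle a,a_1,\ldots,a_i\brangle\brsq$. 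Applying the ``in particular'' case of (4) of Lemma~\ref{lem:tribracketfomula} with its $m$ equal to $i-1$ and its $n$ equal to $i$ (so that $m=n-1$) yields $z_i=a_i$. This completes the induction, so $\varphi_n\big(\psi_n\big((a,a_1,\ldots,a_n)\big)\big)=(a,a_1,\ldots,a_n)$, and therefore $\varphi_n$ and $\psi_n$ are mutually inverse.

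The argument is essentially bookkeeping; the only points needing care are the index matching when invoking parts (3) and (4) of Lemma~\ref{lem:tribracketfomula} and the separate treatment of the low indices $i,j\in\{0,1\}$ that fall outside the range of Lemma~\ref{lem:zw}. The decisive move is to pass immediately to the closed forms of Lemma~\ref{lem:zw}, which replaces the iterated formal substitutions hidden in $\varphi_n$ by honest compositions of the tribracket operation and makes the rest of the computation routine.
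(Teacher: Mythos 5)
Your proof is correct and follows essentially the same route as the paper: both composites are evaluated on generators, the recursive definitions are replaced by the closed forms of Lemma~\ref{lem:zw}, and the identities (3) and (4) of Lemma~\ref{lem:tribracketfomula} finish the two directions (with induction only needed for $\varphi_n\circ\psi_n$). Your citation of part (3) for the $\psi_n\circ\varphi_n$ direction is in fact the right one, where the paper's text points to part (4).
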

\begin{proof}
We first show that $\psi_n \circ \varphi_n ={\rm id}$. We have
\[
\begin{array}{ll}
&\psi_n \circ \varphi_n \Big(\big((a,b_1), (a,b_2), \ldots , (a,b_n)\big)\Big)\\[5pt]
&=\psi_n \Big( (a, z_1, \ldots , z_n) \Big)\\[5pt]
&=\big((a,w'_1), (a,w'_2), \ldots , (a, w'_n)\big),\\[5pt]
\end{array}
\]
where $w_1'=z_1 = b_1$, and for $i\geq2$, 
\[
\begin{array}{ll}
w'_i&=\blangle  a, z_1, \ldots , z_i \brangle\\[5pt]
&=\blangle  a, z_1, \ldots , z_{i-1}, \blsq a, z_1, \ldots , z_{i-1}; b_i \brsq \brangle\\[5pt]
&=b_i, \\[5pt]
\end{array}
\]
where the first and second equalities respectively  follow from (\ref{formula2-2}) and (\ref{formula2-1}) of Lemma~\ref{lem:zw}, and the third equality follows from (\ref{formula1-4}) of Lemma~\ref{lem:tribracketfomula}.

We next show that $\varphi_n \circ \psi_n ={\rm id}$. We have
\[
\begin{array}{ll}
&\varphi_n \circ \psi_n \big((a, a_1, a_2, \ldots , a_n)\big)\\[5pt]
&=\varphi_n  \Big( \big((a,w_1), (a,w_2), \ldots , (a, w_n)\big) \Big)\\[5pt]
&=(a, z'_1, \ldots , z'_n). \\[5pt]
\end{array}
\]
Here we show that $z'_{i}=a_{i}$ for $i \in \{1, 2, \cdots, n\}$ by induction. When $i=1$, $z'_{1}=w_{1}=a_{1}$.
Assuming that $z'_{k}=a_{k}$ for $k \leq {i-1}$, we have
 \[
\begin{array}{ll}
z'_{i}&=\blsq a, z'_1, \ldots , z'_{i-1}; w_{i} \brsq \\[5pt]
&=\blsq a, a_1, \ldots , a_{i-1}; \blangle  a, a_1, \ldots , a_{i} \brangle \brsq \\[5pt]
&=a_{i}, \\[5pt]
\end{array}
\]
where the first equality follows from (\ref{formula2-1}) of  Lemma~\ref{lem:zw}, the second equality follows from the assumption and  (\ref{formula2-2}) of Lemma~\ref{lem:zw},  and the third equality follows from  (\ref{formula1-4}) of Lemma~\ref{lem:tribracketfomula}.
\end{proof}

Now we consider about the composition $\varphi_{n-1} \circ \partial_n^{\rm LB} \circ \psi_n$.
For an integer $n\geq 2$, we have 
\begin{align*}
&\varphi_{n-1} \circ \partial_n^{\rm LB} \circ \psi_n \big( (a, a_1, a_2, \ldots , a_n) \big) \notag \\[3pt] 
&= \varphi_{n-1} \circ \partial_n^{\rm LB}\Big( \big((a,w_1), (a,w_2), \ldots , (a, w_n)\big) \Big)\notag \\[3pt]
&= \varphi_{n-1}\Big(\sum_{i=1}^{n} (-1)^i \big\{ \big( (a, w_1 ), \ldots, (a, w_{i-1}), (a, w_{i+1}), \ldots  , (a,w_n) \big) \notag \\[3pt]
&\hspace{2.5cm}- \big( (w_i, [a, w_1, w_i]=:b_{(i,1)} ), \ldots, (w_i, [a, w_{i-1}, w_i]=:b_{(i,i-1)}), \notag    \\[3pt]
&\hspace{3cm} (w_i, [a, w_i, w_{i+1}]=:b_{(i,i+1)}) ,\ldots  , (w_i, [a, w_i, w_{n}]=:b_{(i,n)})\big) \big\} \Big) \notag \\[3pt]
&=\sum_{i=1}^{n} (-1)^i (u_{(i,0)}, u_{(i,1)}, \ldots,  u_{(i,i-1)}, u_{(i,i+1)}, \ldots , u_{(i,n)})
\\[3pt]
&\hspace{1cm}+\sum_{i=1}^{n} (-1)^{i+1}  (v_{(i,0)}, v_{(i,1)}, \ldots,  v_{(i,i-1)}, v_{(i,i+1)}, \ldots , u_{(i,n)}), 
\end{align*}
where 
\[
u_{(1,j)}=\left\{
\begin{array}{ll}
a & (j=0),\\
w_2 & (j=2),\\
\big[ u_{(1,0)} , u_{(1,2)} , u_{(1,2)} |_{w_2 \mapsto w_3}\big] & (j=3),\\[3pt]
\big[ u_{(1,j-2)}, u_{(1,j-1)} , u_{(1,j-1)}|_{w_{j-1}\mapsto w_j}\big] & (j\geq 4),
\end{array}
\right.
\]
and for $i\geq 2$, 
\[
u_{(i,j)}=\left\{
\begin{array}{ll}
a & (j=0),\\
w_1 & (j=1),\\
\big[ u_{(i,i-2)} , u_{(i,i-1)} , u_{(i,i-1)} |_{w_{i-1} \mapsto w_{i+1}}\big] & (j=i+1),\\[3pt]
\big[ u_{(i,i-1)} , u_{(i,i+1)} , u_{(i,i+1)} |_{w_{i+1} \mapsto w_{i+2}}\big] & (j=i+2),\\[3pt]
\big[ u_{(i,j-2)}, u_{(i,j-1)} , u_{(i,j-1)}|_{w_{j-1}\mapsto w_j}\big] & (\mbox{otherwise}),
\end{array}
\right.
\]
 and where 
\[
v_{(1,j)}=\left\{
\begin{array}{ll}
w_i & (j=0),\\
b_{(1,2)} & (j=2),\\
\big[ v_{(1,0)} , v_{(1,2)} , v_{(1,2)} |_{b_{(1,2)} \mapsto b_{(1,3)}}\big] & (j=3),\\[3pt]
\big[ v_{(1,j-2)}, v_{(1,j-1)} , v_{(1,j-1)}|_{b_{(1,j-1)}\mapsto b_{(1,j)}}\big] & (j\geq 4),
\end{array}
\right.
\]
and for $i\geq 2$, 
\[
v_{(i,j)}=\left\{
\begin{array}{ll}
w_i & (j=0),\\
b_{(i,1)} & (j=1),\\
\big[ v_{(i,i-2)} , v_{(i,i-1)} , v_{(i,i-1)} |_{b_{(i, i-1)} \mapsto b_{(i, i+1)}}\big] & (j=i+1),\\[3pt]
\big[ v_{(i,i-1)} , v_{(i,i+1)} , v_{(i,i+1)} |_{b_{(i, i+1)} \mapsto b_{(i, i+2)}}\big] & (j=i+2),\\[3pt]
\big[ v_{(i,j-2)}, v_{(i,j-1)} , v_{(i,j-1)}|_{b_{(i, j-1)}\mapsto b_{(i,j)}}\big] & (\mbox{otherwise}).
\end{array}
\right.
\]
We recall that in Subsection~\ref{subsec:NHom}, a homomorphism $\partial_{n-1}^{\rm N} : C_{n-1}^{\rm N} (X) \to C_{n-2}^{\rm N} (X)$ was defined by 
\begin{align}
&\partial_{n-1}^{\rm N} \big( ( a, a_1, \ldots , a_{n} ) \big)  \notag \\
&= \sum_{i=0}^{n-1} (-1)^i \big( y_{(i,1)}, y_{(i,2)}, \ldots , y_{(i,i)}, a_{i+1}, a_{i+2}, \ldots , a_{n} \big)\notag  \\
&\hspace{1cm}+ \sum_{i=0}^{n-1} (-1)^{i+1} \big( a, a_1, \ldots , a_i, y_{(i,i+1)},y_{(i,i+2)},\ldots , y_{(i,n-1)} \big) \notag 
\end{align}
if $n> 0$, and $\partial_n^{\rm N}=0$ otherwise.

\begin{lemma}\label{lem:chainmap}
$\varphi_n$ and $\psi_n$ are chain maps between  the chain complexes $C_*^{\rm LB} (X)$ and $C_*^{\rm N} (X)$.
\end{lemma}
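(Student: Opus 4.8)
The plan is to reduce everything to the single identity
\[
\varphi_{n-1}\circ\partial_n^{\rm LB}\circ\psi_n=\partial_{n-1}^{\rm N}
\qquad(n\in\mathbb Z),
\]
both sides being zero for $n\le 1$. Since $\varphi_n$ and $\psi_n$ are mutually inverse isomorphisms by Lemma~\ref{lem:inverses}, this one identity yields \emph{both} chain-map assertions at once: composing on the right with $\psi_n=\varphi_n^{-1}$ gives $\varphi_{n-1}\circ\partial_n^{\rm LB}=\partial_{n-1}^{\rm N}\circ\varphi_n$, and composing on the left with $\psi_{n-1}=\varphi_{n-1}^{-1}$ gives $\partial_n^{\rm LB}\circ\psi_n=\psi_{n-1}\circ\partial_{n-1}^{\rm N}$. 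By Lemma~\ref{lem:welldefined} it is enough to verify the identity on the unnormalized complexes $C_\ast(X)$ and $C_\ast^{\rm Nie}(X)$, since everything then passes to the quotients.

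The left-hand side has already been expanded above: $\varphi_{n-1}\circ\partial_n^{\rm LB}\circ\psi_n\big((a,a_1,\dots,a_n)\big)$ is the signed sum, over $i=1,\dots,n$, of the $u$-tuple $(u_{(i,0)},\dots,u_{(i,i-1)},u_{(i,i+1)},\dots,u_{(i,n)})$ with sign $(-1)^i$ and the $v$-tuple $(v_{(i,0)},\dots,v_{(i,i-1)},v_{(i,i+1)},\dots,v_{(i,n)})$ with sign $(-1)^{i+1}$; and $\partial_{n-1}^{\rm N}\big((a,a_1,\dots,a_n)\big)$ is the signed sum, over $I=0,\dots,n-1$, of $\big(y_{(I,1)},\dots,y_{(I,I)},a_{I+1},\dots,a_n\big)$ with sign $(-1)^I$ and $\big(a,a_1,\dots,a_I,y_{(I,I+1)},\dots,y_{(I,n-1)}\big)$ with sign $(-1)^{I+1}$. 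As the substitution $I=i-1$ is a bijection $\{1,\dots,n\}\to\{0,\dots,n-1\}$ with $(-1)^i=(-1)^{I+1}$ and $(-1)^{i+1}=(-1)^I$, the proof comes down to the two closed-form identifications
\[
(u_{(i,0)},\dots,u_{(i,i-1)},u_{(i,i+1)},\dots,u_{(i,n)})=\big(a,a_1,\dots,a_{i-1},y_{(i-1,i)},\dots,y_{(i-1,n-1)}\big),
\]
\[
(v_{(i,0)},\dots,v_{(i,i-1)},v_{(i,i+1)},\dots,v_{(i,n)})=\big(y_{(i-1,1)},\dots,y_{(i-1,i-1)},a_i,\dots,a_n\big).
\]

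For the $u$-tuples: the entries in positions $0,\dots,i-1$ are exactly $\varphi_{i-1}\circ\psi_{i-1}$ applied to $(a,a_1,\dots,a_{i-1})$, because $w_1,\dots,w_{i-1}$ depend only on $a,a_1,\dots,a_{i-1}$ and the defining recursion for $u_{(i,\cdot)}$ restricted to these positions is literally that composite; hence $u_{(i,j)}=a_j$ for $j\le i-1$ by Lemma~\ref{lem:inverses}. For $j\ge i+1$ one argues by induction on $j$: Lemma~\ref{lem:zw}(\ref{formula2-1}) exhibits $u_{(i,j-1)}$ as a bold square bracket whose innermost letter is $w_{j-1}$, so the formal substitution $w_{j-1}\mapsto w_j$ replaces precisely that letter, and then Lemma~\ref{lem:tribracketfomula}(\ref{formula1-1}),(\ref{formula1-4}) together with Lemma~\ref{lem:tribrackets1}(2) collapse the outcome to $y_{(i-1,j-1)}$; the base case $j=i+1$ is the computation $\big[a_{i-2},a_{i-1},\blangle a_{i-2},a_{i-1},a_i,a_{i+1}\brangle\big]=\langle a_{i-1},a_i,a_{i+1}\rangle=y_{(i-1,i)}$. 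For the $v$-tuples: $v_{(i,0)}=w_i=y_{(i-1,1)}$ by Lemma~\ref{lem:y}(\ref{formula3-2}), and the remaining entries are the $\varphi$-reconstruction from the second coordinates $b_{(i,\ell)}=[a,w_\ell,w_i]$ $(\ell<i)$ and $b_{(i,\ell)}=[a,w_i,w_\ell]$ $(\ell>i)$; feeding the values of these brackets supplied by Lemma~\ref{lem:y}(\ref{formula3-3}) and (\ref{formula3-4}) into Lemma~\ref{lem:tribracketfomula}(\ref{formula1-4}) and Lemma~\ref{lem:tribrackets1}(2) gives $v_{(i,j)}=y_{(i-1,j+1)}$ for $j\le i-2$, then $v_{(i,i-1)}=a_i$ (here the ``$j=i-1$'' branch of Lemma~\ref{lem:y}(\ref{formula3-3}) is used), and $v_{(i,j)}=a_j$ for $j\ge i+1$. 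Assembling the two collapsed families reproduces $\partial_{n-1}^{\rm N}$ term by term.

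I expect the main obstacle to be purely combinatorial bookkeeping rather than any new tribracket algebra. One must (i) manage the reindexing between the $u$- and $v$-tuples, whose index set $\{0,\dots,i-1,i+1,\dots,n\}$ omits $i$, and the linearly indexed tuples of $\partial_{n-1}^{\rm N}$; (ii) carefully propagate the formal substitutions $|_{w_{j-1}\mapsto w_j}$ and $|_{b_{(i,j-1)}\mapsto b_{(i,j)}}$ through the nested bold-bracket words, which is exactly where Lemma~\ref{lem:zw} (locating the innermost letter of each $z$-term) is indispensable; and (iii) treat separately the shape-changing boundary indices $j=i-1,i,i+1,i+2$, which require the special ``$j=i-1$'' and ``$j=i+1$'' clauses of Lemma~\ref{lem:y} rather than its generic ones. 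Once this is organized, each individual step is an instance of Lemmas~\ref{lem:tribrackets1}, \ref{lem:tribrackets2}, \ref{lem:tribracketfomula}, \ref{lem:zw}, and \ref{lem:y}, and nothing beyond those is needed.
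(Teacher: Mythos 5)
Your proposal is correct and follows essentially the same route as the paper's proof: reduce to the single identity $\varphi_{n-1}\circ\partial_n^{\rm LB}\circ\psi_n=\partial_{n-1}^{\rm N}$, identify the $u$-tuples with $(a,a_1,\dots,a_{i-1},y_{(i-1,i)},\dots,y_{(i-1,n-1)})$ and the $v$-tuples with $(y_{(i-1,1)},\dots,y_{(i-1,i-1)},a_i,\dots,a_n)$ by induction using Lemmas~\ref{lem:zw}, \ref{lem:y} and \ref{lem:tribracketfomula}, and then reindex $i\mapsto i-1$. Your added observation that the mutual inverseness of $\varphi_n$ and $\psi_n$ turns this one identity into both chain-map assertions makes explicit what the paper leaves implicit, but the decomposition, the key lemmas, and the bookkeeping are the same.
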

\begin{proof}
We may show that $\varphi_{n-1} \circ \partial_n^{\rm LB} \circ \psi_n = \partial_{n-1}^{\rm N}$ for $n\geq 2$.
We will show that 
\begin{subequations}
\begin{align}
&(u_{(i,0)}, u_{(i,1)}, \ldots,  u_{(i,i-1)}, u_{(i,i+1)}, \ldots , u_{(i,n)})\notag \\
 &= \big( a, a_1, \ldots , a_{i-1}, y_{(i-1,i)},y_{(i-1,i+1)},\ldots , y_{(i-1,n-1)} \big) \label{eq:1a}
\end{align}
\end{subequations}
and
\begin{subequations}
\begin{align} 
& (v_{(i,0)}, v_{(i,1)}, \ldots,  v_{(i,i-1)}, v_{(i,i+1)}, \ldots , v_{(i,n)})\notag \\
&=\big( y_{({i-1},1)}, y_{(i-1,2)}, \ldots , y_{(i-1,i-1)}, a_{i}, a_{i+1}, \ldots , a_{n} \big). \label{eq:1b}
\end{align}
\end{subequations}
 
First we  consider the above (\ref{eq:1a}) in the case of $i \geq 2$.
For the first $i$ components,  we have 
\[
\begin{array}{l}
(u_{(i,0)}, u_{(i,1)}, \ldots,  u_{(i,i-1)}) =\varphi_{i-1}\circ \psi_{i-1} \big( (a, a_1,\ldots , a_{i-1}) \big)\\[3pt]
={\rm id} \big((a, a_1,\ldots , a_{i-1}) \big)= (a, a_1,\ldots , a_{i-1}).
\end{array}
\] 
For $u_{(i,i+1)}$, we have
 \[
\begin{array}{ll}
u_{(i,i+1)}&= \blsq  u_{(i,0)}, u_{(i,1)}, \ldots , u_{(i,i-1)}; w_{i+1} \brsq \\[5pt]
&= \blsq a, a_1,\ldots , a_{i-1} ; \blangle  a, a_1, \ldots , a_{i+1} \brangle \brsq\\[5pt]
&= \blangle  a_{i-1}, a_{i} , a_{i+1} \brangle\\[5pt]
&= \langle  a_{i-1}, a_{i} , a_{i+1} \rangle\\[5pt]
&=y_{(i-1, i)},
\end{array}
\] 
where the first and second equalities respectively  follow from (1) and (2) of  Lemma~\ref{lem:zw}, the third equality follows from (\ref{formula1-4}) of  Lemma~\ref{lem:tribracketfomula}, and the fourth and fifth equalities follow from the definitions.

For $j \geq i+2$, we show that $u_{(i, j)}=y_{(i-1, j-1)}$ by induction for $j$. Assuming that $u_{(i, k)}=y_{(i-1, k-1)}$ for $i+1 \leq k \leq j-1$, we have
\[
\begin{array}{ll}
u_{(i,j)}&= \blsq  u_{(i,0)}, u_{(i,1)}, \ldots , u_{(i,i-1)},  u_{(i,i+1)}, \ldots , u_{(i,j-1)}; w_{j} \brsq \\[5pt]
&= \blsq a, a_1,\ldots , a_{i-1}, y_{(i-1, i)}, \ldots , y_{(i-1, j-2)} ; \blangle  a, a_1, \ldots , a_{j} \brangle \brsq\\[5pt]
&= \blsq  a_{i-1}, y_{(i-1, i)}, \ldots , y_{(i-1, j-2)} ; \blangle  a_{i-1}, \ldots , a_{j} \brangle \brsq\\[5pt]
&= \blsq  a_{i-1}, y_{(i-1, i)}, \ldots , y_{(i-1, j-2)} ; \blangle  a_{i-1}, y_{(i-1, i)}, \ldots , y_{(i-1, j-2)} , y_{(i-1, j-1)} \brangle \brsq\\[5pt]
&= y_{(i-1, j-1)},\\[5pt]
\end{array}
\] 
where the first equality follows from (1) of Lemma~\ref{lem:zw},  the second equality follows from the assumption, the third and fifth equalities follow from (4) of Lemma~\ref{lem:tribracketfomula}, and the fourth equality follows from (3) of  Lemma~\ref{lem:y}.

Next we consider the equality (\ref{eq:1b})  in the case of $i \geq 2$. For $0 \leq j \leq i-2$, we show that $v_{(i, j)}=y_{(i-1, j+1)}$ by induction for $j$. When $j=0$, we have 
\[
\begin{array}{ll}
v_{(i,0)}&=w_i\\[5pt]
&=  \blangle  a, a_1, \ldots , a_{i} \brangle\\[5pt]
&= y_{(i-1, 1)},\\[5pt]
\end{array}
\] 
where the first equality follows from the definition, the second equality follows from (2) of Lemma~\ref{lem:zw}, and  the third equality follows from (2) of Lemma~\ref{lem:y}.
When $j=1$, we have 
\[
\begin{array}{ll}
v_{(i,1)}&=b_{(i, 1)}\\[5pt]
&= [a, w_1, w_{i}]\\[5pt]
&= \blsq a, a_1; \blangle  a, a_1, \ldots , a_{i} \brangle \brsq \\[5pt]
&= \blangle  a_1, a_2, \ldots , a_{i} \brangle\\[5pt]
&= y_{(i-1, 2)},\\[5pt]
\end{array}
\] 
where the first and second equalities follow from the definitions, the third equality follows from (2) of Lemma~\ref{lem:zw} and the definition, the fourth equality follows from (4) of Lemma~\ref{lem:tribracketfomula}, and the fifth equality follows from (2) of Lemma~\ref{lem:y}.
Assuming that $v_{(i, k)}=y_{(i-1, k+1)}$ for $k \leq j-1$, we have
\[
\begin{array}{ll}
v_{(i,j)}&= \blsq  v_{(i,0)}, v_{(i,1)}, \ldots , v_{(i,j-1)}; b_{(i,j)} \brsq \\[5pt]
&= \blsq y_{(i-1,1)}, y_{(i-1,2)}, \ldots , y_{(i-1, j)} ; [ a, w_j, w_i ]  \brsq \\[5pt]
&= \blsq y_{(i-1,1)}, y_{(i-1,2)}, \ldots , y_{(i-1, j)} ; \blangle  y_{(i-1,1)}, y_{(i-1,2)}, \ldots , y_{(i-1, j+1)} \brangle \brsq\\[5pt]
&= y_{(i-1, j+1)},\\[5pt]
\end{array}
\] 
where the first equality follows from (1) of  Lemma~\ref{lem:zw}, the second equality follows from the assumption and the definition, the third equality follows from (4) of Lemma~\ref{lem:y}, and the fourth equality follows from (4) of  Lemma~\ref{lem:tribracketfomula}.

When $j=i-1$, we have
\[
\begin{array}{ll}
v_{(i,i-1)}&= \blsq  v_{(i,0)}, v_{(i,1)}, \ldots , v_{(i,i-2)}; b_{(i,i-1)} \brsq \\[5pt]
&= \blsq y_{(i-1,1)}, y_{(i-1,2)}, \ldots , y_{(i-1, i-1)} ;  [ a, w_{i-1}, w_i ] \brsq \\[5pt]
&= \blsq y_{(i-1,1)}, y_{(i-1,2)}, \ldots , y_{(i-1, i-1)} ; \blangle  y_{(i-1,1)}, y_{(i-1,2)}, \ldots , y_{(i-1, i-1)}, a_i \brangle \brsq\\[5pt]
&= a_i ,\\[5pt]
\end{array}
\] 
where the first equality follows from (1) of  Lemma~\ref{lem:zw}, the second equality follows from the assumption and the definition, the third equality follows from (4) of Lemma~\ref{lem:y}, and the fourth equality follows from (4) of  Lemma~\ref{lem:tribracketfomula}.

For $j \geq i+1$, we show that $v_{(i,j)}=a_j$ by induction for $j$. When $j=i+1$, we have 
\[
\begin{array}{ll}
v_{(i,i+1)}&= \blsq  v_{(i,0)},  \ldots ,v_{(i,i-2)}, v_{(i,i-1)}; b_{(i,i+1)} \brsq \\[5pt]
&= \blsq y_{(i-1,1)},  \ldots , y_{(i-1, i-1)}, a_i ;  [a, w_i, w_{i+1}] \brsq \\[5pt]
&= \blsq y_{(i-1,1)},  \ldots , y_{(i-1, i-1)}, a_i ; \blangle  y_{(i-1,1)},  \ldots , y_{(i-1, i-1)}, a_i, a_{i+1} \brangle  \brsq \\[5pt]
&= a_{i+1} ,\\[5pt]
\end{array}
\] 
where the first equality follows from (1) of Lemma~\ref{lem:zw}, the second equality follows from the assumption and the definition, the third equality follows from (5) of Lemma~\ref{lem:y}, and the fourth equality follows from (4) of  Lemma~\ref{lem:tribracketfomula}. Assuming that $v_{(i,k)}=a_k$ for $i+1 \leq k \leq j-1$, we have
\[
\begin{array}{ll}
v_{(i,j)}&= \blsq  v_{(i,0)},  \ldots  ,v_{(i,i-2)}, v_{(i,i-1)}, v_{(i,i+1)},  \ldots , v_{(i,j-1)}; b_{(i,j)} \brsq \\[5pt]
&= \blsq y_{(i-1,1)},  \ldots , y_{(i-1, i-1)}, a_i, a_{i+1}, \ldots , a_{j-1}; [a, w_i, w_{j}]  \brsq \\[5pt]
&= \blsq y_{(i-1,1)},  \ldots , y_{(i-1, i-1)}, a_i, a_{i+1}, \ldots , a_{j-1};\\
& \hspace{50pt} \blangle  y_{(i-1,1)}, \ldots , y_{(i-1, i-1)}, a_i, a_{i+1}, \ldots , a_j \brangle  \brsq \\[5pt]
&= a_{j} ,\\[5pt]
\end{array}
\] 
where the first equality follows from (1) of  Lemma~\ref{lem:zw}, the second equality follows from the assumption and the definition, the third equality follows from (5) of Lemma~\ref{lem:y}, and the fourth equality follows from (4) of  Lemma~\ref{lem:tribracketfomula}.

Therefore we have 
\begin{align*}
&\varphi_{n-1} \circ \partial_n^{\rm LB} \circ \psi_n \big( (a, a_1, a_2, \ldots , a_n) \big) \notag \\
&=\sum_{i=1}^{n} (-1)^i (u_{(i,0)}, u_{(i,1)}, \ldots,  u_{(i,i-1)}, u_{(i,i+1)}, \ldots , u_{(i,n)}) 
\\
&\hspace{1cm}+\sum_{i=1}^{n} (-1)^{i+1}  (v_{(i,0)}, v_{(i,1)}, \ldots,  v_{(i,i-1)}, v_{(i,i+1)}, \ldots , u_{(i,n)}) \\ 
&= \sum_{i=1}^{n} (-1)^i  \big( a, a_1, \ldots , a_{i-1}, y_{(i-1,i)},y_{(i-1,i+1)},\ldots , y_{(i-1,n-1)} \big) \notag  \\
&\hspace{1cm}+ \sum_{i=1}^{n} (-1)^{i+1} \big( y_{({i-1},1)}, y_{(i-1,2)}, \ldots , y_{(i-1,i-1)}, a_{i}, a_{i+1}, \ldots , a_{n} \big)\notag \\
&= \sum_{i=0}^{n-1} (-1)^{i+1} \big( a, a_1, \ldots , a_i, y_{(i,i+1)},y_{(i,i+2)},\ldots , y_{(i,n-1)} \big) \notag  \\
&\hspace{1cm}+ \sum_{i=0}^{n-1} (-1)^i \big( y_{(i,1)}, y_{(i,2)}, \ldots , y_{(i,i)}, a_{i+1}, a_{i+2}, \ldots , a_{n} \big) \notag \\
&=\partial_{n-1}^{\rm N} \big( ( a, a_1, \ldots , a_{n} ) \big).
\end{align*}

We leave the proof in the case of $i=1$ to the reader.

\end{proof}

The next theorem follows from Lemmas~\ref{lem:welldefined}, \ref{lem:inverses} and \ref{lem:chainmap}.
\begin{theorem}
$\varphi_n$ and $\psi_n$ are chain maps, and they are the inverses of each other.  
\end{theorem}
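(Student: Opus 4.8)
The plan is to derive the theorem as a purely formal consequence of Lemmas~\ref{lem:welldefined}, \ref{lem:inverses}, and \ref{lem:chainmap}, so that no substantially new argument is required at this point. First I would invoke Lemma~\ref{lem:welldefined}: regarding $\varphi_n$ as a map on $C_n(X)$ and $\psi_n$ as a map on $C_{n-1}^{\rm Nie}(X)$, it shows $\varphi_n\big(D_n(X)\big)\subset D_{n-1}^{\rm Nie}(X)$ and $\psi_n\big(D_{n-1}^{\rm Nie}(X)\big)\subset D_n(X)$, hence both descend to well-defined homomorphisms $\varphi_n\colon C_n^{\rm LB}(X)\to C_{n-1}^{\rm N}(X)$ and $\psi_n\colon C_{n-1}^{\rm N}(X)\to C_n^{\rm LB}(X)$ between the quotient complexes. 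Then I would cite Lemma~\ref{lem:inverses} for the identities $\psi_n\circ\varphi_n={\rm id}$ and $\varphi_n\circ\psi_n={\rm id}$, so that $\varphi_n$ and $\psi_n$ are mutually inverse $\mathbb Z$-module isomorphisms for every $n$.

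The remaining point is compatibility with the differentials. Lemma~\ref{lem:chainmap} supplies the key identity $\varphi_{n-1}\circ\partial_n^{\rm LB}\circ\psi_n=\partial_{n-1}^{\rm N}$ for $n\geq 2$; the case $n=1$ is trivial because $\varphi_0=0$, $\partial_1^{\rm LB}=0$ and $\partial_0^{\rm N}=0$. Composing that identity on the right with $\varphi_n$ and using $\psi_n\circ\varphi_n={\rm id}$ gives
\[
\partial_{n-1}^{\rm N}\circ\varphi_n=\varphi_{n-1}\circ\partial_n^{\rm LB}\circ\psi_n\circ\varphi_n=\varphi_{n-1}\circ\partial_n^{\rm LB},
\]
which is exactly the chain-map condition for $\varphi_\bullet$ (a degree $-1$ map $C_*^{\rm LB}(X)\to C_*^{\rm N}(X)$). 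For $\psi_\bullet$ I would pre-compose the last displayed identity with $\psi_{n-1}$ and post-compose with $\psi_n$, using $\psi_{n-1}\circ\varphi_{n-1}={\rm id}$ and $\varphi_n\circ\psi_n={\rm id}$, to obtain $\partial_n^{\rm LB}\circ\psi_n=\psi_{n-1}\circ\partial_{n-1}^{\rm N}$, which is the chain-map condition for $\psi_\bullet$ (a degree $+1$ map $C_*^{\rm N}(X)\to C_*^{\rm LB}(X)$). This establishes all assertions of the theorem.

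There is no genuine obstacle at this stage: all the real difficulty is already absorbed into Lemma~\ref{lem:chainmap}, whose proof must track the iterated horizontal brackets $u_{(i,j)}$ and $v_{(i,j)}$ produced by $\varphi_{n-1}\circ\partial_n^{\rm LB}\circ\psi_n$ and identify them with Niebrzydowski's $y_{(i,j)}$ by repeated application of the bracket calculus of Lemmas~\ref{lem:tribracketfomula} and \ref{lem:y} (together with the treatment of the omitted $i=1$ case), and into Lemma~\ref{lem:inverses}, which again rests on formula~(\ref{formula1-4}) of Lemma~\ref{lem:tribracketfomula}. The only subtlety worth stating explicitly in the present proof is the degeneracy bookkeeping in Lemma~\ref{lem:welldefined}: $\varphi_n$ sends a block with $b_i=b_{i+1}$ to a tuple in which $\langle z_{i-1},z_i,z_i\rangle=z_i$, and $\psi_n$ sends a block with $\langle a_{j-1},a_j,a_{j+1}\rangle=a_j$ to one with $w_{j+1}=w_j$, both immediate from Lemma~\ref{lem:tribrackets1}(1). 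Granting these inputs, the present theorem is a direct corollary, and I would simply write its proof as the one-line assembly "This follows from Lemmas~\ref{lem:welldefined}, \ref{lem:inverses} and \ref{lem:chainmap}" augmented by the short composition argument above.
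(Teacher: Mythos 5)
Your proposal is correct and follows exactly the paper's route: the paper itself derives this theorem as an immediate consequence of Lemmas~\ref{lem:welldefined}, \ref{lem:inverses} and \ref{lem:chainmap}, and your explicit composition argument (deducing $\partial_{n-1}^{\rm N}\circ\varphi_n=\varphi_{n-1}\circ\partial_n^{\rm LB}$ and $\partial_n^{\rm LB}\circ\psi_n=\psi_{n-1}\circ\partial_{n-1}^{\rm N}$ from $\varphi_{n-1}\circ\partial_n^{\rm LB}\circ\psi_n=\partial_{n-1}^{\rm N}$ together with the mutual-inverse identities) is a valid, slightly more explicit write-up of the same assembly. No gaps.
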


\section{Examples}
\label{Examples}

\begin{example} (Alexander local biquandles)
Let $X=R$ be a commutative ring with identity. Then for any 
choice of two units $x,y\in R$, we have a horizontal-tribracket known as an 
\textit{Alexander tribracket} defined by
\[[a,b,c]=xb+yc-xya.\]
This determines a local biquandle structure on $X^2$ by the maps
\begin{eqnarray*}
(a,b)\uline{\star}(a,c) & = & (c,xb+yc-xya)\\
(a,b)\oline{\star}(a,c) & = & (c,xc+yb-xya).
\end{eqnarray*}

For example, in the Alexander local biquandle on $X=\mathbb{Z}_5$ with
$x=3$ and $y=2$,
 we have
\[(1,3)\uline{\star}(1,4) = (4,3(3)+2(4)-(3)(2)1)=(4,9+8-6)=(4,1)\]
and
\[(1,3)\oline{\star}(1,4) = (4,3(4)+2(3)-(3)(2)1)=(4,12+6-6)=(4,2).\]
\end{example}

\begin{example} 

Let $X=G$ be a group. Then we have a horizontal-tribracket known as an 
\textit{Dehn tribracket} defined by
\[[a,b,c]=ba^{-1}c.\]
This determines a local biquandle structure on $X^2$ by the maps
\begin{eqnarray*}
(a,b)\uline{\star}(a,c) & = & (c,ba^{-1}c)\\
(a,b)\oline{\star}(a,c) & = & (c,ca^{-1}b).
\end{eqnarray*}

\end{example}

\begin{example}
More generally, we can represent a tribracket on a finite set 
$X=\{1,2,\dots, n\}$ with an operation $3$-tensor,
i.e. a list of $n$ $n\times n$ matrices, with the rule that the
element $[i,j,k]$ is the entry in the $i$th matrix's $j$th row and
$k$th column. Then \texttt{python} calculations show that there are
two horizontal-tribrackets with two elements, given by
\[\left[\left[\begin{array}{rr} 1& 2 \\2 & 1\end{array}\right],
\left[\begin{array}{rr}2 & 1 \\ 1 & 2\end{array}\right]\right]
\]
and
\[\left[
\left[\begin{array}{rr}2 & 1 \\ 1 & 2\end{array}\right],
\left[\begin{array}{rr} 1& 2 \\2 & 1\end{array}\right]
\right]
\]
and twelve tribrackets on the set of three elements.
\end{example}

Theorem~\ref{mainthm1} says that there is an isomorphism between the $(n-1)$st 
Niebrzydowski's (co)homology of a vertical-tribracket on $X$ and its corresponding
$n$th local biquandle (co)homology. We can represent an element
of $C_{\rm LB}^n(X)$ with an $(n+1)$-tensor whose entry in position 
$(z_0,z_1,\dots, z_n)$ is the coefficient of $\chi_{(z_0,z_1,\dots, z_n)}$.

\begin{example}
In \cite{NeedellNelson16}, an algebraic structure called \textit{biquasile}
was introduced, consisting of a pair of quasigroup operations 
$\ast,\cdot:X\times X\to X$ satisfying the conditions
\[\begin{array}{rcl}
a\ast(x\cdot (y\ast(a\cdot b))) & = & (a\ast(x\cdot y))\ast(x\cdot (y\ast((a\ast(x\cdot y))\cdot b))) \\
y\ast((a\ast (x\cdot y))\cdot b) & = & (y\ast(a\cdot b))\ast((a\ast (x\cdot (y\ast(a\cdot b))))\cdot b).
\end{array}\]
A biquasile determines a vertical-tribracket by 
\[\langle a,b,c\rangle =b\ast(a\cdot c).\]
For example, the biquasile structure on $X=\{1,2\}$ given by
\[\begin{array}{r|rr}
\ast & 1 & 2 \\ \hline
1 & 1 & 2 \\
2 & 2 & 1\\
\end{array} \quad
\begin{array}{r|rr}
\cdot & 1 & 2 \\ \hline
1 & 2 & 1 \\
2 & 1 & 2\\
\end{array}
\]
yields the tribracket with operation tensor
\[\left[\left[\begin{array}{rr}
2 & 1\\
1 & 2
\end{array}\right],
\left[\begin{array}{rr}
1 & 2\\
2 & 1
\end{array}\right]\right].\]
In \cite{ChoiNeedellNelson17}, the notion of \textit{Boltzmann enhancements}
for biquasile colorings of oriented knots and links was considered and conditions
for a function $f:X\times X\times X\to X$ to define a Boltzmann weight were 
identified. These correspond to local biquandle 2-cocycles in our present
notation. 
For example, the Boltzmann weight $\phi$ with $\mathbb{Z}_5$
coefficients in Example 5 in \cite{ChoiNeedellNelson17} corresponds to the local 
biquandle $2$-cocycle specified by the 3-tensor
\[\phi=\left[\left[\begin{array}{rr}
0 & 2\\
3 & 0
\end{array}\right],
\left[\begin{array}{rr}
0 & 4\\
3 & 0
\end{array}\right]\right]\]
or alternatively
\[\phi=2\chi_{((1,1),(1,2))}+3\chi_{((1,2),(1,1))}+4\chi_{((2,1),(2,2))}+3\chi_{((2,2),(2,1))}\]
where 
\[\chi_{\vec{x}}(\vec{y})=\left\{\begin{array}{ll}
1 & \vec{x}=\vec{y}\\
0 & \vec{x}\ne\vec{y}
\end{array}\right.\]
for $\vec{x},\vec{y}\in\{\left((a,b),(a,c)\right)\ |\ a,b,c\in X\}$.
\end{example}

\begin{remark}
We have defined the cocycle invariant as a multiset of diagram weights over the
set of colorings of a diagram. It is common in the literature
(see \cite{CarterJelsovskyKamadaLangfordSaito03, ElhamdadiNelson} etc.) to encode a multiset in a polynomial format by
summing a formal variable $u$ to the power of each element in the multset,
effectively rewriting multiplicities as coefficients and elements as exponents.
For example, the multiset $\{0,0,2,2,2,4\}$ becomes
\[u^0+u^0+u^2+u^2+u^2+u^4=2+3u^3+u^4.\]
This notation is convenient since it is easier to compare polynomials
visually than to compare multisets visually, and evaluation of the polynomial
at $u=1$ yields the number of colorings.
\end{remark}

\begin{example}
Let $X=\{1,2,3\}$ and consider the horizontal-tribracket defined by the 3-tensor
\[\left[
\left[\begin{array}{rrr}
2& 3& 1 \\
3& 1& 2 \\
1& 2& 3
\end{array}\right],\left[\begin{array}{rrr}
1& 2& 3 \\
2& 3& 1 \\
3& 1& 2
\end{array}\right],\left[\begin{array}{rrr}
3& 1& 2 \\
1& 2& 3 \\
2& 3& 1
\end{array}\right]
\right].\]
Our \texttt{python} computations reveal $3$-cocycles in $H^3_{\rm LB}(X;\mathbb{Z}_5)$
including
\[\theta=
\left[
\left[\begin{array}{rrr}
0 & 0 & 0 \\
1 & 0 & 3 \\
1 & 3 & 0
\end{array}\right],\left[\begin{array}{rrr}
0 & 0 & 0 \\
1 & 0 & 0 \\
1 & 1 & 0
\end{array}\right],\left[\begin{array}{rrr}
0 & 3 & 3 \\
3 & 0 & 2 \\
3 & 4 & 0
\end{array}\right]\right].
\]
Let us illustrate the computation of the cocycle enhancement invariant
for the Hopf link using this cocycle. There are 27 semi-arc $X^2$- (and region $X$-)colorings of
the Hopf link; for each, we must compute the diagram weight. Writing the
colorings of the diagrams in terms of 2-parallels, we have the following 
weight contributions at each crossing:
\[\includegraphics{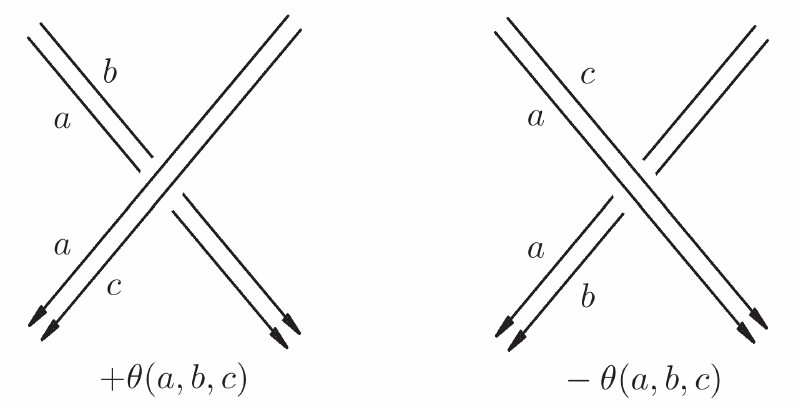}\]
Then for example the coloring below has diagram weight 
$\theta(1,1,1)+\theta(1,1,1)=0+0=0$,
\[\includegraphics{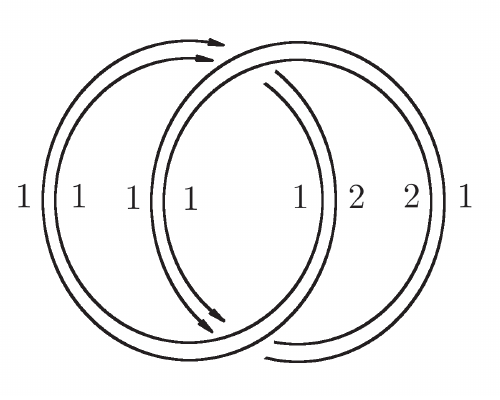}\]
while the coloring
\[\includegraphics{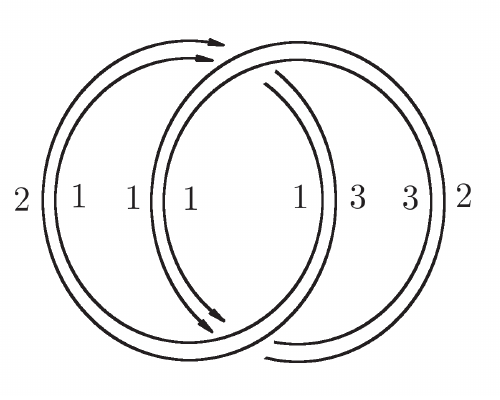}\]
has diagram weight
$\theta(1,2,1)+\theta(1,1,2)=1+0=1$.
Repeating for all 27 colorings, we obtain the multiset invariant value
\[\Phi_{\theta}(L2a1)=
\{0,0,0,0,0,0,0,0,0,
 1,1,1,1,1,1,1,1,1,
 1,1,1,1,1,1,1,1,1\}\]
or in polynomial form, $\Phi_{\theta}(L2a1)=9+18u$.

The invariant takes the following values on links with
up to seven crossings:
\[
\begin{array}{r|l}
\Phi_{\theta}(L) & L \\ \hline
9 & L6a4 \\
27 & L5a1, L7a1, L7a3, L7a4 \\ 
9+18u & L2a1, L7a5, L7a6 \\
9+18u^2 & L4a1, L6a1, L7a2 \\
9+18u^3 & L6a2, L6a3, L7n1 \\
9+54u^2+18u^3 & L6a5, L6n1 \\
45+18u+18u^2 & L7a7 \\
14+3u+2u^2+4u^3+4u^4 & L7n2 \\
\end{array}
\]
\end{example}

\begin{example}
In  Example 4.17 of \cite{Niebrzydowski2}, a coloring of the trefoil knot by the
Alexander tribracket with $[a,b,c]=a+b-c$, i.e. given by the 3-tensor
\[
\left[
\left[\begin{array}{rrr}
1 & 3 & 2 \\
2 & 1 & 3 \\
3 & 2 & 1
\end{array}\right], \left[\begin{array}{rrr}
2 & 1 & 3 \\
3 & 2 & 1 \\
1 & 3 & 2 
\end{array}\right], \left[\begin{array}{rrr}
3 & 2 & 1 \\
1 & 3 & 2 \\
2 & 1 & 3
\end{array}\right]
\right]
\]
was observed to have a nontrivial weight sum with respect to the 
cocycle which in our notation is
\[\theta=
\left[
\left[\begin{array}{rrr}
0 & 1 & 0 \\
0 & 0 & 0 \\
0 & 1 & 0 
\end{array}\right], \left[\begin{array}{rrr}
0 & 0 & 1 \\
0 & 0 & 0 \\
1 & 0 & 0 
\end{array}\right], \left[\begin{array}{rrr}
0 & 2 & 0 \\
1 & 0 & 0 \\
2 & 0 & 0
\end{array}\right]
\right].\]

Our \texttt{python} computations reveal that the right-handed trefoil $3_1$
knot has $\Phi_{\theta}(3_1)=9+18u$ while the left-handed trefoil $\overline{3_1}$
has $\Phi_{\theta}(3_1)=9+18u^2$, distinguishing the two and showing that
these invariants can distinguish mirror images. We note also that the
square knot and granny knot are distinguished by this invariant with
$\Phi_{\theta}(3_1\#3_1)=45+18u+18u^2$ and
$\Phi_{\theta}(3_1\#\overline{3_1})=9+36u+36u^2$.
For prime knots with eight or fewer crossings, the nontrivial values are listed 
in the table.
\[
\begin{array}{r|l}
\Phi_{\theta}(K) & K \\ \hline
27 & 6_1, 8_{10}, \\
9+18u & 3_1, 7_4, 7_7, 8_{15}, 8_{21} \\
9+18u^2 & 8_5, 8_{19} \\
15+6u+6u^2 & 8_{11} \\
18+6u+3u^2 & 8_{20} \\
33+24u+24u^2 & 8_{18} \\
\end{array}
\]
\end{example}

\begin{example}
Let $X$ be the set $\{1,2,3\}$ with horizontal-tribracket given by the 3-tensor
\[\left[
\left[\begin{array}{rrr}
2 & 3 & 1 \\
1 & 2 & 3 \\
3 & 1 & 2
\end{array}\right],\left[\begin{array}{rrr}
3 & 1 & 2 \\
2 & 3 & 1 \\
1 & 2 & 3
\end{array}\right],\left[\begin{array}{rrr}
1 & 2 & 3 \\
3 & 1 & 2 \\
2 & 3 & 1
\end{array}\right]
\right].\]
With the cocycle with $\mathbb{Z}_3$ coefficients specified by
\[\theta=\left[
\left[\begin{array}{rrr}
0 & 0 & 0 \\
2 & 0 & 2 \\
2 & 2 & 0
\end{array}\right],\left[\begin{array}{rrr}
0 & 1 & 2 \\
2 & 0 & 2 \\
1 & 0 & 0
\end{array}\right],\left[\begin{array}{rrr}
0 & 0 & 1 \\
2 & 0 & 0 \\
1 & 1 & 0
\end{array}\right]
\right], \]
we compute the following invariant values for prime knots with up to 8 crossings
and prime links with up to seven crossings. Note in particular that since the 
trivial invariant values for links of two and three components are 27 and 81 
respectively, this example detects the non-triviality of every link on the list.
\[\begin{array}{r|l}
\Phi_{\theta}(K) & K \\\hline
9 & 4_1,5_1,5_2,6_2,6_3,7_1,7_2,7_3,7_5,7_6, 8_1, 8_2, 8_3,8_4,8_6,8_7,8_8,8_9,
8_{12}, 8_{13}, 8_{14}, 8_{16}, 8_{17} \\
27 & 6_1, 8_{10}\\
9+18u & 3_1, 7_4, 7_7, 8_{15}, 8_{21} \\
9+18u^2 & 8_5, 8_{19} \\
13+4u+10u^2 & 8_{11} \\
16+4u+7u^2& 8_{20} \\
25+28u+28u^2 & 8_{18}
\end{array}\]
\[\begin{array}{r|l}
\Phi_{\theta}(L) & L \\\hline
9 & L2a1, L4a1, L5a1, L6a2, L6a4, L6n1, L7a2,L7a3, L7a4, L7a6, L7a7,L7n1, L7n2 \\
9+18u & L6a1 \\
9+18u^2 & L6a3, L6a5, L7a1 \\
13+10u+4u^2 & L7a5
\end{array}\]
\end{example}

\section*{Acknowledgments}

The first author was supported by Simons Foundation collaboration grant 316709.
The second author was supported by JSPS KAKENHI Grant Number 16K17600.

\end{document}